\documentclass[11pt]{amsart}

\usepackage[a4paper,margin=1in]{geometry}
\usepackage{amsmath,amssymb,amsfonts,amsthm,amsrefs,bbm}
\usepackage{mathrsfs}
\usepackage{hyperref}
\hypersetup{colorlinks=true,linkcolor=blue,citecolor=blue,urlcolor=blue}
\usepackage{enumitem}
\usepackage{mathtools}
\usepackage{tikz}
\mathtoolsset{showonlyrefs}

\newtheorem{theorem}{Theorem}[section]
\newtheorem{lemma}[theorem]{Lemma}
\newtheorem{proposition}[theorem]{Proposition}
\newtheorem{corollary}[theorem]{Corollary}
\theoremstyle{definition}
\newtheorem{definition}[theorem]{Definition}
\theoremstyle{remark}
\newtheorem{remark}[theorem]{Remark}
\newtheorem{observation}[theorem]{Observation}
\newtheorem{conventions}[theorem]{Conventions}

\newtheorem{example}[theorem]{Example}

\newcommand{\R}{\mathbb R}
\newcommand{\N}{\mathbb N}
\newcommand{\Z}{\mathbb Z}
\newcommand{\C}{\mathbb C}
\newcommand{\U}{\mathcal U}

\newcommand{\id}{\mathrm{id}}
\newcommand{\cb}{\mathrm{cb}}
\newcommand{\CP}{\mathrm{CP}}

\newcommand{\CB}{\mathrm{CB}}

\newcommand{\CC}{\mathrm{CC}}

\newcommand{\diam}{\mathrm{diam}}
\newcommand{\norm}[1]{\left\lVert #1\right\rVert}

\newcommand{\VR}{\mathrm{VR}}

\title[Bures--Kuratowski Metrics and Simplicial Complexes for Completely Bounded Maps]{Bures--Kuratowski Metrics and Simplicial Complexes for Completely Bounded Maps}
\author{Remus Floricel}
\address{University of Regina, Department of Mathematics and Statistics, Regina, SK, Canada}
\email{Remus.Floricel@uregina.ca} 
\author{Sarah Plosker}
\address{Brandon University, Department of Mathematics and Computer Science, Brandon, MB, Canada}
\email{ploskers@brandonu.ca} 
\author{Avner Sadikov}
\address{University of Regina, Department of Mathematics and Statistics, Regina, SK, Canada}
\email{Avner.Sadikov@uregina.ca} 
\date{\today}
\subjclass[2020]{Primary 46L07; Secondary 46L05, 47L25, 54E35, 55U10}
\keywords{completely bounded maps, completely positive maps, Bures distance,
regular representations, Kuratowski embedding, metric wedge spaces,
Vietoris--Rips complexes, Čech complexes, simplicial complexes}
\thanks {The research of R.F. and S.P. was partially funded by NSERC Discovery Grants. }
\begin{document}

\begin{abstract}

Let $A$ be a unital $C^*$-algebra and $H$ a Hilbert space. The cone
$\CP(A,B(H))$ of completely positive maps carries the Bures metric $\beta$, closely related to the cb-norm.

We introduce a family of Bures--Kuratowski (BK) metrics on $\CB(A,B(H))$ that extend $\beta$ exactly on $\CP(A,B(H))$. The construction combines a Kuratowski embedding of the Bures cone, based at an anchor $\theta\in\CP(A,B(H))$, with a regular-representation Hausdorff coordinate arising from universal regular models. Each BK metric admits an $\ell^p$-wedge decomposition, splitting $\CB(A,B(H))$ into the Bures cone and a non-CP component attached at $\theta$.

We then study Vietoris--Rips and \v{C}ech complexes of BK metric spaces. The wedge formula yields explicit criteria for mixed simplices, a join-type description of the mixed Rips complex, and ball-intersection criteria for mixed \v{C}ech simplices. For finite point clouds, this makes the mixed simplicial geometry computable from the two component metrics and reveals new homological features arising from the interaction between the CP and non-CP sectors.

\end{abstract}

\maketitle

{\small\tableofcontents}

\section{Introduction}

Let $A$ be a unital $C^*$-algebra and $H$ a Hilbert space. We consider
\[
\mathcal X:=\CB(A,B(H)),
\qquad
\mathcal C:=\CP(A,B(H))\subseteq\mathcal X,
\]
the spaces of completely bounded and completely positive maps, respectively; see \cite{Paulsen, Pisier} for a comprehensive treatment of the subject. 

The cone $\mathcal C$ carries a natural metric geometry given by the Bures
distance $\beta$, defined through common Stinespring representations
\cite{KSW}. This metric is quantitatively linked to the cb-norm by the
Kretschmann--Schlingemann--Werner inequalities \cite{KSW}, and therefore provides a
canonical geometric structure on the completely positive cone. In particular,
when $H=\C$, it reduces to the classical Bures distance on positive functionals \cite{Bures}.

A basic question is whether this geometry extends in a meaningful way from
$\mathcal C$ to the full space $\mathcal X$. A direct extension compatible with
the cb-norm is too much to expect globally. Instead, the point of view of this
paper is that the geometry of $\mathcal X$ should be understood relative
to the CP cone: one should preserve the Bures metric exactly on $\mathcal C$,
while organizing the non-CP maps as a separate geometric sector attached to the
cone in a controlled way. At a conceptual level, the BK framework links the theory of completely bounded
maps in operator algebras, through the induced metric geometry, to simplicial
constructions in algebraic topology.

Our construction uses regular representation theory for completely bounded maps.
By a theorem of Bhat--Mallick--Sumesh \cite{BMS2017}, every completely bounded
map admits a factorization through a regular homomorphism. Universal regular
models for the completely contractive ball, introduced in Definition~\ref{def:univ-reg-model}, then allow us to define an intrinsic
Hausdorff-type metric $\delta_{\mathrm{reg}}$ on $\mathcal X$, obtained by
comparing implementers in universal regular models and taking a normalized
supremum over all such models (see Definition~\ref{def:delta-reg-max}). This metric provides the non-CP coordinate in
our construction.

We then define a family of Bures--Kuratowski metrics $\beta^{BK}_{\theta,\lambda,p,\alpha}$
on $\mathcal X$, depending on an anchor $\theta\in\mathcal C$, a scale
parameter $\lambda>0$, an exponent $p\in[1,\infty]$, and a parameter
$\alpha\in(0,1]$; see Definition~\ref{def:BK-family}. These metrics agree exactly with the Bures distance on
$\mathcal C$  (Theorem~\ref{prop:BK-metric}), and combine two pieces of information: a Kuratowski embedding \cite{Kuratowski} of
the Bures cone based at $\theta$, and a collapsed regular coordinate that
measures the non-CP sector. The resulting metric is intentionally asymmetric
with respect to the anchor: non-CP maps can approach the CP cone only at the
distinguished point $\theta$.

A central structural result is that every BK metric admits a canonical
$\ell^p$-wedge decomposition
\[
(\mathcal X,\beta^{BK}_{\theta,\lambda,p,\alpha})
\cong
(\mathcal C,\beta,\theta)\ \vee_p\ (\mathcal Y,d_{\mathrm{reg}},\ast),
\]
where $\mathcal Y$ is obtained from the non-CP locus by adjoining a basepoint; see Theorem~\ref{thm:BK-lp-wedge}.
Thus the space of completely bounded maps splits into two metric pieces: the
classical Bures geometry on the CP cone and a purely non-CP metric space
attached at the anchor. This decomposition clarifies several basic phenomena:
the CP cone is closed and the non-CP region accumulates on the cone only at the
anchor (Proposition~\ref{prop:bk-attachment-closure}).

The paper also studies the resulting family of BK metrics as a geometric bundle
over the Bures cone. We show that changing $p$, $\lambda$, or $\alpha$ does not
change the induced topology, whereas changing the anchor generally does (Proposition~\ref{prop:anchors-nonequiv}). In
fact, the assignment
\[
\theta\longmapsto \beta^{BK}_{\theta,\lambda,p,\alpha}
\]
defines an isometric embedding of $(\mathcal C,\beta)$ into the space of all
metrics on $\mathcal X$ equipped with the uniform distance (Proposition~\ref{prop:isometric-anchor-map}). In this sense, the
Bures cone parametrizes a canonical family of metric structures on the ambient
space of completely bounded maps.

The final section turns from metric geometry to simplicial constructions. Once a
BK space is identified as an $\ell^p$-wedge, it becomes natural to ask how
Vietoris--Rips and \v{C}ech complexes detect the interaction between the CP and
non-CP sectors. For Vietoris--Rips complexes, the wedge structure yields a
complete and explicit description of mixed simplices: they are governed by the
two radial functions measuring distance to the glued basepoints, leading to a
join-type decomposition of the mixed part of $\VR_t(\mathcal X,d_\theta)$
(Theorem~\ref{thm:rips-join}). In the case $p=\infty$, this description becomes
exact on finite point clouds, where the mixed simplices form a genuine simplicial
join of sublevel-set complexes from the two components.

For \v{C}ech complexes, the situation is more delicate. Mixed simplices are no
longer determined purely by pairwise radial bounds; instead, they reduce to
ball-intersection problems within one component subject to a radial constraint
imposed by the other (Proposition~\ref{prop:cech-mixed-criterion}). This leads to
a natural distinction between intrinsic and ambient \v{C}ech complexes in the BK
setting: ambient complexes can detect the glued basepoint and become
contractible at relatively small scales, while intrinsic complexes retain the
combinatorial geometry of the point cloud. In particular, for finite BK clouds,
the ambient \v{C}ech complex exhibits a ``cone effect'' driven by the basepoint,
which can collapse homology earlier than in the corresponding Vietoris--Rips
filtration. These results show that the wedge geometry produces new and
explicitly computable interactions between the CP and non-CP sectors at the
level of simplicial topology.

\section{Background: Bures distance and  universal regular models}

Let $A$ be a unital $C^*$-algebra and $H$ a Hilbert space.
\begin{definition}
[Bures distance on $\CP(A,B(H))$ {\cite[Def.\ 1]{KSW}}]
\label{def:bures}
Let $\phi,\psi\in \CP(A,B(H))$.
For a representation $\pi:A\to B(K)$, denote by $\mathscr S_\pi(\phi)$ the set of all operators $V:H\to K$
such that $(\pi,V,K)$ is a Stinespring representation of $\phi$, i.e.\ $\phi(a)=V^*\pi(a)V$.
Define
\[
\beta_\pi(\phi,\psi):=\inf\{\|V-W\|:\ V\in \mathscr S_\pi(\phi),\ W\in\mathscr S_\pi(\psi)\},
\]
(with the convention $\beta_\pi(\phi,\psi)=\infty$ if one of the sets is empty), and define the Bures distance by
\[
\beta(\phi,\psi):=\inf_{\pi}\beta_\pi(\phi,\psi),
\]
the infimum taken over all representations $\pi$ of $A$.
\end{definition}

It is well known that $(\mathcal C,\beta)$ is a metric space, and that $\beta$  coincides with the classical Bures distance when 
$H=\mathbb{C}$. Moreover, $\beta$ satisfies the KSW comparison inequalities (see \cite[Th.\ 1]{KSW}), namely 
\begin{equation}\label{eq-KSW}
\frac{\|\phi-\psi\|_{cb}}{\|\phi\|_{cb}^{1/2}+\|\psi\|_{cb}^{1/2}}
\ \le\
\beta (\phi,\psi)
\ \le\
\|\phi-\psi\|_{cb}^{1/2}, \qquad (\phi,\,\psi\in \mathcal C).
\end{equation}
Thus $\beta$ and the cb-norm induce the same topology on  $\CP(A,B(H))$, and are quantitatively comparable via \eqref{eq-KSW}.

\begin{definition}[cf. {\cite[Def.\ 2.7]{BMS2017}}]
Let $A$ and $B$ be unital $C^*$-algebras.
A (multiplicative) homomorphism $\tau:A\to B$ is called regular if
\[
\tau(u)^*\tau(u)=\tau(1)^*\tau(1)
\quad\text{and}\quad
\tau(u)\tau(u)^*=\tau(1)\tau(1)^*
\qquad \forall\, u\in \U(A).
\]
\end{definition}

\begin{theorem}[Regular representation theorem {\cite[Thm.\ 3.2]{BMS2017}}]\label{thm:BMS-3-2}
For every $\phi\in \CB(A,B(H))$ there exist a Hilbert space $K$, a regular homomorphism
$\tau:A\to B(K)$, and an operator $W\in B(H,K)$ such that
\[
\phi(a)=W^*\tau(a)W\qquad\forall a\in A.
\]
\end{theorem}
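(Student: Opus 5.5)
The plan is to obtain the factorization by combining two standard ingredients: a Haagerup--Paulsen--Wittstock decomposition of $\phi$ into a $2\times 2$ completely positive block, and a doubling construction that turns the resulting pair of Stinespring operators into a single compression of a regular homomorphism.

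\textbf{Step 1 (Wittstock factorization).} By the Haagerup--Paulsen--Wittstock theorem there are a unital $*$-representation $\pi:A\to B(L)$ and operators $V_1,V_2\in B(H,L)$ with
\[
\phi(a)=V_1^*\pi(a)V_2, \qquad \|V_1\|\,\|V_2\|=\|\phi\|_{\cb}.
\]
The task is then to rewrite this off-diagonal compression $V_1^*\pi(\cdot)V_2$ as a single compression $W^*\tau(\cdot)W$ by a regular homomorphism.

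\textbf{Step 2 (doubling).} Set $K=L\oplus L$ and define
\[
\tau(a)=\begin{pmatrix}0&\pi(a)\\0&0\end{pmatrix}, \qquad W=\begin{pmatrix}V_1\\ V_2\end{pmatrix}.
\]
Then $W^*\tau(a)W=V_1^*\pi(a)V_2=\phi(a)$, as required. However, $\tau(a)\tau(b)=0$, whereas $\tau(ab)$ need not vanish, so this $\tau$ is not multiplicative and must be modified.

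\textbf{Step 3 (repair).} The fix I would try first is conjugation by an invertible operator. Put $\tau(a)=S(\pi(a)\oplus\pi(a))S^{-1}$ for a suitable invertible $S$. This is automatically a multiplicative homomorphism. Moreover, for a unitary $u$ we have $\tau(u)^*\tau(u)=(S^{-1})^*\bigl(S^*S\bigr)^{\pi(u)\oplus\pi(u)}S^{-1}$, where the superscript denotes conjugation. This shows regularity is a genuinely non-trivial condition, so $S$ must be chosen so that $S^*S$ commutes with $\pi(A)\oplus\pi(A)$. Since $S$ must also produce the off-diagonal term $V_1^*\pi(a)V_2$ rather than a symmetric one, the required commutation is incompatible with non-trivial off-diagonal behaviour.

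The alternative I would then pursue is a regular homomorphism of the form $\tau(a)=T\,\rho(a)$, where $\rho$ is a $*$-representation and $T$ is a partial isometry or positive contraction in the commutant of $\rho(A)$ satisfying $T^2=T$ (so that $\tau$ is multiplicative). Regularity then follows from $\rho(u)$ being unitary and commuting with $T$: indeed $\tau(u)^*\tau(u)=\rho(u)^*T^*T\rho(u)=T^*T=\tau(1)^*\tau(1)$, and similarly for $\tau(u)\tau(u)^*$. Non-selfadjoint idempotents $T$ in the commutant are what allow non-CP compressions. Concretely, take $\rho=\pi\oplus\pi$ and $T=\begin{pmatrix}1&X\\0&0\end{pmatrix}$ with $X$ in the commutant (e.g.\ $X=1$). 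Then
\[
W^*\tau(a)W=W_1^*\pi(a)W_1+W_1^*\pi(a)XW_2 .
\]
One chooses $W_1,W_2$ to realize $V_1^*\pi(a)V_2$, for instance via a polarization trick after enlarging $L$ to $L\otimes\C^4$ so that enough cross terms are available and the unwanted diagonal term cancels.

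\textbf{Main obstacle.} The hard step is this last bookkeeping: arranging $W$ and the idempotent $T$ so that the diagonal CP contribution is cancelled exactly, leaving precisely the off-diagonal term. I expect to handle it by using polarization $V_1^*\pi V_2=\tfrac14\sum_k i^k (V_2+i^kV_1)^*\pi(V_2+i^kV_1)$ together with scalar idempotents $T$ built on $\C^4$ that encode the coefficients $i^k$.
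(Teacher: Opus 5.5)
The paper does not prove this statement; it quotes it from Bhat--Mallick--Sumesh. Judged on its own, the route you settle on is correct. After the Wittstock factorization $\phi(a)=V_1^*\pi(a)V_2$, you take $\tau(a)=T\rho(a)$ with $\rho=\pi\oplus\pi$ and $T$ an idempotent in $\rho(A)'$. Your checks of multiplicativity and regularity go through once you say why $T^*T$ and $TT^*$ commute with $\rho(u)$: the commutant of the self-adjoint set $\rho(A)$ is a $*$-algebra, so $T^*\in\rho(A)'$ too. This is exactly the regular homomorphism the paper uses in Remark~\ref{rem:fixed-regular-representation-false}, where $\widetilde\tau(a)=\begin{bmatrix}\pi(a)&\pi(a)\\0&0\end{bmatrix}$ is your $T\rho$ with $X=1$.

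What is missing is only the explicit $W$, and the step you call the main obstacle is not one. With $X=1$ your own formula reads $W^*\tau(a)W=W_1^*\pi(a)(W_1+W_2)$. Taking $W_1=V_1$ and $W_2=V_2-V_1$ gives
\[
W^*\tau(a)W=V_1^*\pi(a)V_1+V_1^*\pi(a)(V_2-V_1)=V_1^*\pi(a)V_2=\phi(a)
\]
at once, so no cancellation bookkeeping or polarization is needed. The polarization route can be made rigorous with a direct sum of four such $2\times 2$ blocks, one realizing each scalar $i^k$. It is a detour, though, and ``scalar idempotents on $\C^4$'' as stated is too vague to count as a proof.

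Two smaller corrections:
\begin{itemize}
\item Drop ``partial isometry or positive contraction.'' A contractive idempotent on a Hilbert space is an orthogonal projection, so $T\rho$ would be a $*$-representation and every compression would be CP. The idempotent you actually need, for example $\begin{pmatrix}1&1\\0&0\end{pmatrix}$, has norm $\sqrt2$.
\item Step 2 and the similarity attempt can be deleted. The clean reason similarity fails is that it gives $\tau(1)=1$, so regularity makes every $\tau(u)$ unitary. Then $\tau(u^*)=\tau(u)^{-1}=\tau(u)^*$, and since unitaries span $A$, $\tau$ is a $*$-homomorphism.
\end{itemize}
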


\begin{remark}
\label{rem:fixed-regular-representation-false}
A completely positive map may admit regular representations with
non-$*$-homomorphic $\tau$. Indeed, let $\phi\in\CP(A,B(H))$ be nonzero, and choose a Stinespring
representation $\phi(a)=V^*\pi(a)V$
with $\pi:A\to B(K)$ a $*$-homomorphism.
Set $\widetilde K:=K\oplus K$ and define
\[
\widetilde\tau(a):=
\begin{bmatrix}
\pi(a) & \pi(a)\\
0 & 0
\end{bmatrix},
\qquad
\widetilde W:=\frac{1}{\sqrt2}\binom{V}{V}.
\]
Then $\widetilde\tau$ is a regular homomorphism, $\phi(a)=\widetilde W^*\,\widetilde\tau(a)\,\widetilde W$ $(a\in A),$
but $\widetilde\tau$ is not a $*$-homomorphism since
$\widetilde\tau(1)
\neq
\widetilde\tau(1)^*.$
\end{remark}

\begin{definition}
\label{def:univ-reg-model}  Let $\mathcal X_1:=\CC(A,B(H))$ be the space of completely contractive linear maps from $A$ to $B(H)$,
A pair $(K,\tau)$, consisting of a Hilbert space $K$ and a regular homomorphism
$\tau:A\to B(K)$, is called a universal regular model for $\CC(A,B(H))$ if for every
$\psi\in \CC(A,B(H))$ there exists an isometry $V_\psi\in B(H,K)$ such that
\[
\psi(a)=V_\psi^*\tau(a)V_\psi\qquad(a\in A).
\]\end{definition}
Let $\mathfrak U$ denote the collection of all universal regular models $(K,\tau)$ for $\CC(A,B(H))$.

\begin{theorem}[{\cite[Thm.\ 3.5]{BMS2017}}]\label{thm:BMS-3-5}
 There exists a universal regular model for $\CC(A,B(H))$.
\end{theorem}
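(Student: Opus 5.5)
Our plan is to glue together regular representations of all completely contractive maps into one regular homomorphism, by a direct-sum construction. Since a direct sum of individual factorizations of the form in Theorem~\ref{thm:BMS-3-2} only provides operators $W$ that need not be isometries, we first normalize so that each $\psi\in\CC(A,B(H))$ gets a factorization $\psi(a)=V^*\tau_\psi(a)V$ with $V$ an isometry. Then we take $K:=\bigoplus_{\psi\in\CC(A,B(H))}K_\psi$ and $\tau:=\bigoplus_\psi\tau_\psi$. For each $\psi$ we set $V_\psi:=\iota_\psi V$, where $\iota_\psi$ is the inclusion of the $\psi$-th summand.

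\textbf{Step 1: isometric regular dilation of a single complete contraction.} Given $\psi$ with $\|\psi\|_{\cb}\le1$, apply Wittstock/Paulsen's off-diagonal technique. There are completely positive $\phi_1,\phi_2:A\to B(H)$ with $\|\phi_i\|\le1$ such that
\[
\Phi=\begin{bmatrix}\phi_1&\psi\\ \psi^*&\phi_2\end{bmatrix}
\]
is completely positive on $A$ into $B(H\oplus H)$. Adjusting the diagonal by adding CP terms, we may assume $\phi_1(1)=\phi_2(1)=I$, so $\Phi$ is unital. Stinespring then gives a unital $*$-representation $\pi$ on $L$ and isometries $S_1,S_2:H\to L$ with $\phi_i(a)=S_i^*\pi(a)S_i$ and $\psi(a)=S_1^*\pi(a)S_2$.

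To turn the two-sided compression $S_1^*\pi(\cdot)S_2$ into a one-sided compression by a single isometry, we imitate Remark~\ref{rem:fixed-regular-representation-false}. On $L\oplus L$ we seek a homomorphism of the form
\[
\tau_\psi(a)=\begin{bmatrix}\pi(a)&\pi(a)T\\0&0\end{bmatrix}
\]
with $T$ a partial isometry commuting with $\pi(A)$, together with an isometry $V$ built from $S_1,S_2$ so that $V^*\tau_\psi(a)V=\psi(a)$. This is essentially the content of \cite[Thm.\ 3.2]{BMS2017}. If the direct construction becomes messy, the fallback is to cite \cite{BMS2017}, take the factorization $\psi=W^*\tau W$ with $\|W\|\le1$ (which holds for contractions in their proof), and dilate $W$ to an isometry as follows. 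Put $V=(W,(I-W^*W)^{1/2})^T$ into $K\oplus H'$ and extend $\tau$ by the zero homomorphism on $H'$. The zero homomorphism is trivially regular, and a direct sum of regular homomorphisms is regular. Then $V^*(\tau\oplus0)(a)V=W^*\tau(a)W$.

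\textbf{Step 2: direct sums.} Check that $\tau=\bigoplus\tau_\psi$ is a bounded homomorphism. For this we need $\sup_\psi\|\tau_\psi\|<\infty$, so the construction in Step~1 must yield $\|\tau_\psi(a)\|\le C\|a\|$ with $C$ uniform; in the matrix form above $C\le2$. Regularity passes to direct sums because the defining identities hold coordinatewise. Finally, each $\iota_\psi V$ is an isometry, which gives the required factorization for every $\psi$.

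\textbf{Main obstacle.} The hard part is Step~1: obtaining a regular factorization with an isometry, rather than merely a contraction, and with uniformly bounded $\tau_\psi$. The zero-summand padding trick disposes of the isometry issue cleanly. The uniform bound is the point to verify carefully in the BMS construction. I expect it to be controlled by $\|\psi\|_{\cb}\le1$, after rescaling $W$ and $\tau$ appropriately (by $\sqrt{\|\tau\|}$-type normalizations).
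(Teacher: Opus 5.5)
Your reduction is sound. The zero-summand padding correctly turns a contractive regular factorization into an isometric one, and a direct sum of regular homomorphisms with $\sup_\psi\|\tau_\psi\|<\infty$ is again a regular homomorphism. So everything rests on Step~1 with a $\psi$-independent bound. That step is never carried out, and the form you propose for it cannot work. Let $\omega$ be a state and $\psi:=-\omega(\cdot)I_H\in\CC(A,B(H))$. Suppose $\tau_\psi(a)=\begin{bmatrix}\pi(a)&\pi(a)T\\0&0\end{bmatrix}$ with $\|T\|\le1$, and $V=\binom{P}{Q}$ is an isometry. Then for every unit vector $h$,
\[
\mathrm{Re}\,\langle V^*\tau_\psi(1)Vh,h\rangle=\|Ph\|^2+\mathrm{Re}\,\langle TQh,Ph\rangle\ \ge\ \|Ph\|^2-\|Ph\|\sqrt{1-\|Ph\|^2}\ \ge\ \tfrac{1-\sqrt2}{2}>-1,
\]
so $V^*\tau_\psi(1)V=-I$ is impossible. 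More generally, $\tau(1)$ is an idempotent, and an idempotent whose numerical range contains $-1$ has norm at least $3$. Hence every universal regular model has $\|\tau(1)\|\ge 3$; this is also why $a$ must be large in Remark~\ref{rem:non-uniqueness}. Your anticipated bound $C\le 2$ is therefore false, and no partial isometry $T$ will do.

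The fallback does not repair this. Since $c\,\tau$ is not multiplicative for $c\notin\{0,1\}$, there is no ``rescaling of $\tau$''. Theorem~\ref{thm:BMS-3-2} as stated controls neither $\|W\|$ nor $\|\tau\|$. Asserting that its proof yields $\|W\|\le 1$ together with a uniform bound on $\|\tau_\psi\|$ assumes exactly the quantitative statement that has to be proved.

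The missing idea is to put the required size into the off-diagonal entry rather than into $W$. Take your unital $\pi$ on $L$ and your isometries $S_1,S_2$, and define $\tau_\psi(a):=\begin{bmatrix}\pi(a)&3\pi(a)\\0&0\end{bmatrix}$ on $L\oplus L$. This map has three properties:
\begin{enumerate}[label=\textup{(\roman*)}]
\item it is multiplicative;
\item it satisfies $\|\tau_\psi(a)\|\le\sqrt{10}\,\|a\|$;
\item it is regular, since $\tau_\psi(u)^*\tau_\psi(u)=\begin{bmatrix}I&3I\\3I&9I\end{bmatrix}$ and $\tau_\psi(u)\tau_\psi(u)^*=10I\oplus 0$ for every unitary $u$.
\end{enumerate}
Now set $W:=\binom{S_1/\sqrt2}{\frac13(\sqrt2\,S_2-S_1/\sqrt2)}$. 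Then $\tau_\psi(a)W=\binom{\sqrt2\,\pi(a)S_2}{0}$, so $W^*\tau_\psi(a)W=S_1^*\pi(a)S_2=\psi(a)$. Moreover, $W^*W\le\frac12 I+\frac19\bigl(\sqrt2+\frac1{\sqrt2}\bigr)^2 I=I$. With this in place, your padding and direct-sum steps go through verbatim and give a universal regular model with $\|\tau\|\le\sqrt{10}$.
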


\begin{remark}\label{rem:non-uniqueness}

Universal regular models for $\CC(A,B(H))$ are not unique up to unitary equivalence,
even under a direct analogue of the minimal Stinespring cyclicity condition. 
One can call $(K,\tau)$ minimal if $K=\overline{\mathrm{span}}\{\tau(a)Vh,\tau(a)^*Vh:\ a\in A,\ h\in H,\ V\in\mathcal V\},$
where $\mathcal V$ consists of all isometries $V\in B(H,K)$ implementing maps in $\CC(A,B(H))$.

Non-uniqueness already appears for $(A,H)=(\mathbb C,\mathbb C)$.
For $a\ge3$ define $\tau_a(\lambda)=\lambda P_a$ on $K=\mathbb C^2$, where
\[
P_a=\begin{pmatrix}1&a\\0&0\end{pmatrix}.
\]
Then $(\mathbb C^2,\tau_a)$ is a minimal universal regular model for $\CC(\C,\C)$, but $(\mathbb C^2,\tau_a)$ and $(\mathbb C^2,\tau_b)$ are not unitarily equivalent for $a\neq b$ since
\[
\sigma(P_a^*P_a)=\{0,1+a^2\}\neq\{0,1+b^2\}.
\]
Thus minimality alone does not yield unitary uniqueness of universal regular models.
\end{remark}

\section{Bures--Kuratowski metrics}\label{sec:reg}

Let $A$ be a unital $C^*$-algebra and $H$ a Hilbert space. For each $(K,\tau)\in\mathfrak U$ set $T_\tau:=\tau(1)\in B(K)$. For $\phi\in \mathcal X$, $\phi\neq 0$, define
\[
\mathscr R_\tau(\phi):=
\left\{\|\phi\|_{cb}^{1/2}\,V:\ V\in B(H,K)\text{ is an isometry and }
\widehat \phi(a)=V^*\tau(a)V\ \forall a\in A\right\},\] where $\widehat{\phi}:=\phi/\|\phi\|_{cb} \in  \mathcal X_1$. We also set $\mathscr R_\tau(\phi):=\{0\}$ and $\widehat{\phi}:=0$ if $\phi=0$. We then consider the linear map $\Gamma_\tau:B(H,K)\to B\bigl(H,K\oplus K\bigr)$ defined by
\[
\Gamma_\tau(W):=\frac1{\sqrt2}\begin{bmatrix}W\\ T_\tau W\end{bmatrix},
\] and set $\mathscr F_\tau(\phi):=\Gamma_\tau(\mathscr R_\tau(\phi))$. By Theorem~\ref{thm:BMS-3-5}, the sets $\mathscr R_\tau(\phi)$ and $\mathscr F_\tau(\phi)$ are nonempty.

We begin with basic norm-topological properties of the model-dependent implementer sets.
\begin{lemma}\label{lem:closed-bounded-tau}
Fix $(K,\tau)\in\mathfrak U$ and $\phi\in \mathcal X$. Then the set $\mathscr F_\tau(\phi)$ is norm-bounded, and norm-closed in
$B\bigl(H,K\oplus K\bigr)$.
\end{lemma}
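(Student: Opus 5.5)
The plan is to treat the trivial case $\phi=0$ first, where $\mathscr F_\tau(0)=\Gamma_\tau(\{0\})=\{0\}$ is obviously bounded and closed. Then I will show that $\mathscr R_\tau(\phi)$ is bounded and closed in $B(H,K)$ for $\phi\neq 0$, and transfer both properties through $\Gamma_\tau$. The transfer works because $\Gamma_\tau$ is a bounded linear map that is bounded below.

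\textbf{Boundedness.} Every element of $\mathscr R_\tau(\phi)$ has the form $\|\phi\|_{cb}^{1/2}V$ with $V$ an isometry, so it has norm exactly $\|\phi\|_{cb}^{1/2}$. Next, $\|\Gamma_\tau(W)\|\le \tfrac1{\sqrt2}(1+\|T_\tau\|)\|W\|$, so $\mathscr F_\tau(\phi)$ lies in the ball of radius $\tfrac1{\sqrt2}(1+\|T_\tau\|)\|\phi\|_{cb}^{1/2}$.

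\textbf{Closedness of $\mathscr R_\tau(\phi)$.} Suppose $c V_n\to W$ in norm, where $c=\|\phi\|_{cb}^{1/2}>0$ and each $V_n$ is an isometry implementing $\widehat\phi$. Then $V_n\to V:=W/c$ in norm. Since $V_n^*V_n=I_H$ and multiplication is jointly norm-continuous, $V^*V=I_H$, so $V$ is an isometry. For each fixed $a\in A$, the identity $\widehat\phi(a)=V_n^*\tau(a)V_n$ passes to the norm limit $V^*\tau(a)V$, because $\tau(a)$ is a fixed bounded operator. Hence $W=cV\in\mathscr R_\tau(\phi)$.

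\textbf{Transfer through $\Gamma_\tau$.} For any $W$, the first coordinate of $\Gamma_\tau(W)$ is $W/\sqrt2$, so
\[
\|\Gamma_\tau(W)\|\ \ge\ \tfrac1{\sqrt2}\|W\|.
\]
This follows by compressing to the first summand: $\|\Gamma_\tau(W)h\|^2=\tfrac12(\|Wh\|^2+\|T_\tau Wh\|^2)$. So $\Gamma_\tau$ is a linear homeomorphism onto its range, and its range is closed because $\Gamma_\tau$ is bounded below. Concretely, if $\Gamma_\tau(W_n)\to X$, then $(W_n)$ is Cauchy and converges to some $W$, and continuity gives $X=\Gamma_\tau(W)$. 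If moreover each $W_n\in\mathscr R_\tau(\phi)$, then $W\in\mathscr R_\tau(\phi)$ by the closedness step, so $X\in\mathscr F_\tau(\phi)$.

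The only mildly delicate point is this last one: remembering that image-closedness needs $\Gamma_\tau$ to be bounded below, not merely continuous. The first-coordinate estimate settles it immediately, so I expect no real obstacle.
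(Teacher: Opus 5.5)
Your proposal is correct and follows the paper's proof essentially step for step. It uses the same norm estimate for boundedness, and for closedness the same move of reading off the first coordinate of $\Gamma_\tau$ to get $W_n\to W$ and then passing $V_n^*V_n=I$ and $V_n^*\tau(a)V_n=\widehat\phi(a)$ to the norm limit; the only difference is organizational, since you separate the closedness of $\mathscr R_\tau(\phi)$ from the bounded-below property of $\Gamma_\tau$, whereas the paper merges them into a single argument.
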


\begin{proof}
Boundedness holds because every $W\in\mathscr R_\tau(\phi)$ has $\|W\|=\sqrt{\|\phi\|_{cb}}$ and
\[
\|\Gamma_\tau(W)\|\le \frac1{\sqrt2}\bigl(\|W\|+\|T_\tau W\|\bigr)\le \frac1{\sqrt2}(1+\|T_\tau\|)\sqrt{\|\phi\|_{cb}}.
\]
For closedness,
assume $\phi\neq 0$.
Let $\Gamma_\tau(W_n)\in\mathscr F_\tau(\phi)$ and assume $\Gamma_\tau(W_n)\to X$ in operator norm.
Reading the first coordinate shows $W_n\to W$ in operator norm, hence $X=\Gamma_\tau(W)$.
Write $W_n=\sqrt{\|\phi\|_{cb}}V_n$ with $V_n$ isometries implementing $\widehat\phi$.
Then $V_n\to V:=W/\sqrt{\|\phi\|_{cb}}$ in norm, so $V^*V=\lim V_n^*V_n=I$ and
$V^*\tau(a)V=\lim V_n^*\tau(a)V_n=\widehat\phi(a)$. Thus $W\in \mathscr R_\tau(\phi)$ and $X\in\mathscr F_\tau(\phi)$.
\end{proof}

\begin{definition}
\label{def:delta-reg-tau}
Fix $(K,\tau)\in\mathfrak U$.
Let $d_H$ denote the Hausdorff distance on the collection of nonempty closed bounded subsets of
the normed space $B\bigl(H,K\oplus K\bigr)$.
Define
\[
\delta_{\mathrm{reg}}^{\tau}(\phi,\psi):=d_H\bigl(\mathscr F_\tau(\phi),\mathscr F_\tau(\psi)\bigr),
\qquad \phi,\psi\in \mathcal X.
\]
\end{definition}
Each fixed-model Hausdorff coordinate induces a metric on $\mathcal X$.
\begin{lemma}
\label{lem:delta-reg-tau-metric}
For each $(K,\tau)\in\mathfrak U$, the function $\delta_{\mathrm{reg}}^{\tau}$ is a metric on $\mathcal X$.
\end{lemma}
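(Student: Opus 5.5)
The plan is to deduce the metric axioms for $\delta_{\mathrm{reg}}^{\tau}$ from the standard fact that the Hausdorff distance $d_H$ is a metric on the nonempty closed bounded subsets of a metric space. Lemma~\ref{lem:closed-bounded-tau} gives that each $\mathscr F_\tau(\phi)$ is nonempty (by Theorem~\ref{thm:BMS-3-5} and the definition of a universal regular model), norm-closed and norm-bounded. Hence $\delta_{\mathrm{reg}}^{\tau}(\phi,\psi)$ is a finite nonnegative number, and symmetry and the triangle inequality are inherited directly from $d_H$. It remains to prove definiteness. Since $d_H$ separates distinct closed sets, this reduces to injectivity of $\phi\mapsto\mathscr F_\tau(\phi)$. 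In other words, if $\mathscr F_\tau(\phi)=\mathscr F_\tau(\psi)$, we must show $\phi=\psi$.

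For injectivity, first note that $\Gamma_\tau$ is injective, since its first coordinate is $W/\sqrt2$. So $\mathscr F_\tau(\phi)=\mathscr F_\tau(\psi)$ forces $\mathscr R_\tau(\phi)=\mathscr R_\tau(\psi)$. Next we recover $\phi$ from $\mathscr R_\tau(\phi)$. Take any $W\in\mathscr R_\tau(\phi)$.
\begin{itemize}
\item If $\phi=0$, then $\mathscr R_\tau(\phi)=\{0\}$.
\item If $\phi\neq0$, then $W=\|\phi\|_{cb}^{1/2}V$ with $V$ an isometry, so $\|W\|=\|\phi\|_{cb}^{1/2}>0$.
\end{itemize}
Thus $0\in\mathscr R_\tau(\phi)$ if and only if $\phi=0$. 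In the nonzero case, the norm of any element recovers $\|\phi\|_{cb}$, and we also get
\[
W^*\tau(a)W=\|\phi\|_{cb}\,V^*\tau(a)V=\|\phi\|_{cb}\,\widehat\phi(a)=\phi(a).
\]
So $\phi(a)=W^*\tau(a)W$ for any single element $W$ of $\mathscr R_\tau(\phi)$. If $\mathscr R_\tau(\phi)=\mathscr R_\tau(\psi)$, choosing a common element $W$ (possible by nonemptiness) gives $\phi(a)=W^*\tau(a)W=\psi(a)$ for all $a\in A$. This formula also covers the zero case, since $W=0$ implements the zero map.

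The main point requiring care is the definiteness step, and specifically that $d_H(F,G)=0$ forces $F=G$. This holds only because both sets are closed, which is exactly what Lemma~\ref{lem:closed-bounded-tau} provides. Indeed, $d_H(F,G)=0$ means each point of $F$ lies in the closure of $G$ and vice versa, so $F=G$ for closed sets. Once that is in place, the identity $\phi(a)=W^*\tau(a)W$, valid for every implementer $W$ in the set, does the rest. I expect no further obstacles; the only bookkeeping is the separate convention at $\phi=0$.
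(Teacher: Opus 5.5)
Your proposal is correct and follows essentially the same route as the paper. Symmetry and the triangle inequality are inherited from $d_H$; for definiteness, $d_H=0$ forces the implementer sets to be equal, $\Gamma_\tau$ is injective via its first coordinate, and a common implementer $W$ gives $\phi(a)=W^*\tau(a)W=\psi(a)$. You are slightly more explicit than the paper in two places the paper uses tacitly: you invoke closedness from Lemma~\ref{lem:closed-bounded-tau} to get $d_H=0\Rightarrow\mathscr F_\tau(\phi)=\mathscr F_\tau(\psi)$, and you check that every $W\in\mathscr R_\tau(\phi)$, including the zero case, actually implements $\phi$.
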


\begin{proof}
Symmetry and the triangle inequality follow from the corresponding properties of the Hausdorff distance.
For separation, suppose $\delta_{\mathrm{reg}}^{\tau}(\phi,\psi)=0$. Then $\mathscr F_\tau(\phi)=\mathscr F_\tau(\psi)$.
Pick $W\in\mathscr R_\tau(\phi)$, so $\Gamma_\tau(W)\in\mathscr F_\tau(\phi)=\mathscr F_\tau(\psi)$.
Thus $\Gamma_\tau(W)=\Gamma_\tau(W')$ for some $W'\in\mathscr R_\tau(\psi)$.
Since $\Gamma_\tau$ remembers $W$ in its first coordinate, it is injective, so $W=W'$.
Finally, $\phi(a)=W^*\tau(a)W=\psi(a)$ for all $a\in A$, hence $\phi=\psi$.
\end{proof}
The following example shows that a fixed universal model need not recover the Bures metric on the CP cone.
\begin{example}
\label{ex:fixed-model-mismatch}
In general, for a fixed universal regular model $(K,\tau)$, the coordinate $\delta_{\mathrm{reg}}^{\tau}$ need not agree with $\beta$ on $\mathcal C$.
Take $(A,H)=(\C,\C)$ and fix $a\ge3$. Let $(\C^2,\tau_a)$ be the universal regular model from
Remark~\ref{rem:non-uniqueness}, and for $c\ge0$ define $\phi_c(\lambda)=c\lambda$.

For $c_1,c_2\ge0$,
\begin{equation}\label{eq:1-dim}
\beta(\phi_{c_1},\phi_{c_2})=|\sqrt{c_1}-\sqrt{c_2}|,
\end{equation}
since $\phi_c(\lambda)=V_c^*\lambda V_c$ with $V_c=\sqrt c$, and the reverse inequality follows from
$\|V\|^2=\phi_c(1)=c$ for any Stinespring implementer $V$.

Let $S:=\mathscr F_{\tau_a}(\phi_1)$. Since $\mathscr R_{\tau_a}(\phi_c)=\sqrt c\,\mathscr R_{\tau_a}(\phi_1)$ and $\Gamma_{\tau_a}$ is linear,
\[
\delta_{\mathrm{reg}}^{\tau_a}(\phi_{c_1},\phi_{c_2})
=
|\sqrt{c_1}-\sqrt{c_2}|\sup_{x\in S}\|x\|.
\]
For $v_0=\frac{1}{\sqrt{1+a^2}}\binom{1}{a},$
we have $v_0^*P_a v_0=1$, so $\Gamma_{\tau_a}(v_0)\in S$, and $\|\Gamma_{\tau_a}(v_0)\|^2
=
\frac12\bigl(\|v_0\|^2+\|P_a v_0\|^2\bigr)
=
1+\frac{a^2}{2}.$
Hence
\[
\delta_{\mathrm{reg}}^{\tau_a}(\phi_{c_1},\phi_{c_2})
\ge
\sqrt{1+\frac{a^2}{2}}\,
|\sqrt{c_1}-\sqrt{c_2}|.
\]
Thus $\delta_{\mathrm{reg}}^{\tau_a}$ can differ from $\beta$ on $\CP(\C,\C)$.
\end{example}

The model-dependent radius admits quantitative bounds in terms of the cb-norm.
\begin{lemma}
\label{lem:rad-bounds}
Fix $(K,\tau)\in\mathfrak U$ and let 
$c_+(\tau):=(\sqrt{1+\|T_\tau\|^2})/\sqrt2.$
Then for every $\phi\in \mathcal X$,
\[
{\sqrt2}^{-1}\,\sqrt{\|\phi\|_{cb}}
\ \le\ \delta_{\mathrm{reg}}^{\tau}(\phi,0)
\ \le\ c_+(\tau)\,\sqrt{\|\phi\|_{cb}}.
\]
\end{lemma}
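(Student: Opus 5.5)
Since $\mathscr F_\tau(0)=\Gamma_\tau(\{0\})=\{0\}$, the Hausdorff distance from a nonempty bounded set $S$ to the singleton $\{0\}$ equals $\sup_{x\in S}\|x\|$. The plan is therefore to compute
\[
\delta_{\mathrm{reg}}^{\tau}(\phi,0)=\sup\bigl\{\|\Gamma_\tau(W)\|:\ W\in\mathscr R_\tau(\phi)\bigr\}
\]
and bound this quantity above and below. The case $\phi=0$ is trivial, since both sides vanish, so assume $\phi\neq0$. Every $W\in\mathscr R_\tau(\phi)$ has the form $W=\|\phi\|_{cb}^{1/2}V$ with $V$ an isometry. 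By Theorem~\ref{thm:BMS-3-5}, or directly by the definition of $\mathfrak U$, the set $\mathscr R_\tau(\phi)$ is nonempty, so the supremum is taken over a nonempty set.

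For the norm of $\Gamma_\tau(W)$, I will use the $C^*$-identity on the column operator:
\[
\|\Gamma_\tau(W)\|^2=\tfrac12\bigl\|W^*W+W^*T_\tau^*T_\tauW\bigr\|=\tfrac12\|\phi\|_{cb}\,\bigl\|I+V^*T_\tau^*T_\tau V\bigr\|.
\]
For the upper bound, $0\le V^*T_\tau^*T_\tau V\le\|T_\tau\|^2 I$ because $V$ is an isometry. Hence $\|I+V^*T_\tau^*T_\tau V\|\le 1+\|T_\tau\|^2$, which gives $\|\Gamma_\tau(W)\|\le c_+(\tau)\sqrt{\|\phi\|_{cb}}$. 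Taking the supremum yields the right-hand inequality; this is sharper than the triangle-inequality bound used in Lemma~\ref{lem:closed-bounded-tau}. For the lower bound, positivity of $V^*T_\tau^*T_\tau V$ gives $\|I+V^*T_\tau^*T_\tau V\|\ge\|I\|=1$, assuming $H\neq0$. Thus every element of $\mathscr F_\tau(\phi)$ has norm at least $\sqrt{\|\phi\|_{cb}}/\sqrt2$. Since $\mathscr F_\tau(\phi)$ is nonempty, the supremum satisfies the same bound.

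The only subtle point is the identification of the Hausdorff distance to a singleton with the supremum of the norms, together with the nonemptiness needed for the lower bound. Both follow immediately from the definitions and the universality of $(K,\tau)$. The operator inequalities themselves are routine, so I expect no real obstacle.
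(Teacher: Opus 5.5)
Your proposal is correct and follows essentially the same route as the paper. Both identify $\delta_{\mathrm{reg}}^{\tau}(\phi,0)$ with $\sup_{W\in\mathscr R_\tau(\phi)}\|\Gamma_\tau(W)\|$ and apply the $C^*$-identity $\|\Gamma_\tau(W)\|^2=\tfrac12\|W^*W+W^*T_\tau^*T_\tau W\|$. The lower bound comes from positivity of the second term, and the upper bound from $W^*T_\tau^*T_\tau W\le\|T_\tau\|^2W^*W$. You are more explicit than the paper about the Hausdorff distance to a singleton being a supremum of norms, and about the nonemptiness needed for the lower bound.
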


\begin{proof}
If $\phi=0$ the claim is trivial. Assume $\phi\neq 0$ and take $W\in\mathscr R_\tau(\phi)$, so $\|W\|=\sqrt{\|\phi\|_{cb}}$.
Then
\[
\|\Gamma_\tau(W)\|^2=\frac12\|W^*W+W^*T^*_\tau T_\tau W\|.
\]
For the lower bound, $\|W^*W+W^*T^*_\tau T_\tau W\|\ge \|W^*W\|=\|W\|^2=\|\phi\|_{cb}$.
For the upper bound, $W^*T^*_\tau T_\tau W\le \|T_\tau \|^2 W^*W$ implies
$\|W^*W+W^*T^*_\tau T_\tau W\|\le (1+\|T_\tau \|^2)\|W^*W\|=(1+\|T_\tau \|^2)\|\phi\|_{cb}$.
Taking suprema over $W\in\mathscr R_\tau(\phi)$ yields the bounds.
\end{proof}

\begin{definition}
\label{def:delta-reg-max}
Define \[
\delta_{\mathrm{reg}}(\phi,\psi)
:=\sup_{(K,\tau)\in\mathfrak U}\ \overline{\delta}_{\mathrm{reg}}^{\tau}(\phi,\psi),
\]
 for all  $\phi,\psi\in\mathcal X,$ where $\overline{\delta}_{\mathrm{reg}}^{\tau}:=\frac{1}{c_+(\tau)}\,\delta_{\mathrm{reg}}^{\tau}$ and $c_+(\tau)$ is as in Lemma~\ref{lem:rad-bounds}.
\end{definition}
The intrinsic regular coordinate defines a metric compatible with cb-scaling.
\begin{lemma}
\label{lem:deltareg-basic}
The function $\delta_{\mathrm{reg}}$ is a metric on $\mathcal X$. Moreover, for all $\phi,\psi\in\mathcal X$, $\delta_{\mathrm{reg}}(\phi,\psi)\le \|\phi\|_{cb}^{1/2}+\|\psi\|_{cb}^{1/2},$ and in particular $\rho(\phi):=\delta_{\mathrm{reg}}(\phi,0)\le \|\phi\|_{cb}^{1/2}.$
\end{lemma}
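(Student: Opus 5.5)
The plan is to first obtain the upper bound, which also shows that the supremum in Definition~\ref{def:delta-reg-max} is finite. Then the metric axioms follow from the fact that each $\overline\delta^{\tau}_{\mathrm{reg}}$ is a metric (Lemma~\ref{lem:delta-reg-tau-metric}).

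\emph{Step 1: the bound for a single model.} Fix $(K,\tau)\in\mathfrak U$. The Hausdorff distance between two nonempty bounded sets is at most the sum of their radii measured from $0$:
\[
d_H(S_1,S_2)\le \sup_{x\in S_1}\|x\|+\sup_{y\in S_2}\|y\|,
\]
since any $x\in S_1$ satisfies $\|x-y\|\le\|x\|+\|y\|$ for every $y\in S_2$, and symmetrically. For $S=\mathscr F_\tau(\phi)$, the estimate in the proof of Lemma~\ref{lem:rad-bounds} gives $\|\Gamma_\tau(W)\|\le c_+(\tau)\sqrt{\|\phi\|_{cb}}$ for every $W\in\mathscr R_\tau(\phi)$. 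This holds trivially when $\phi=0$, where $\mathscr F_\tau(0)=\{0\}$. Hence
\[
\delta^\tau_{\mathrm{reg}}(\phi,\psi)\le c_+(\tau)\bigl(\|\phi\|_{cb}^{1/2}+\|\psi\|_{cb}^{1/2}\bigr).
\]
Dividing by $c_+(\tau)$ gives the bound for $\overline\delta^\tau_{\mathrm{reg}}$ uniformly in $\tau$. Taking the supremum over $\mathfrak U$ yields the stated inequality, and setting $\psi=0$ gives $\rho(\phi)\le\|\phi\|_{cb}^{1/2}$. The set $\mathfrak U$ is nonempty by Theorem~\ref{thm:BMS-3-5}, so $\delta_{\mathrm{reg}}$ takes values in $[0,\infty)$.

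\emph{Step 2: metric axioms.} Each $\overline\delta^\tau_{\mathrm{reg}}$ is a positive multiple of the metric $\delta^\tau_{\mathrm{reg}}$, so it is a metric. Symmetry and $\delta_{\mathrm{reg}}(\phi,\phi)=0$ pass directly to the supremum. For the triangle inequality, for each $\tau$ we have
\[
\overline\delta^\tau_{\mathrm{reg}}(\phi,\chi)\le \overline\delta^\tau_{\mathrm{reg}}(\phi,\psi)+\overline\delta^\tau_{\mathrm{reg}}(\psi,\chi)\le\delta_{\mathrm{reg}}(\phi,\psi)+\delta_{\mathrm{reg}}(\psi,\chi),
\]
and we then take the supremum over $\tau$ on the left. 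For separation, if $\delta_{\mathrm{reg}}(\phi,\psi)=0$, pick any single model $(K,\tau)\in\mathfrak U$. Then $\delta^\tau_{\mathrm{reg}}(\phi,\psi)=0$, and Lemma~\ref{lem:delta-reg-tau-metric} gives $\phi=\psi$.

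\emph{Main obstacle.} The only genuinely delicate point is finiteness of the supremum: a priori $\|T_\tau\|$ is unbounded over $\mathfrak U$, as Remark~\ref{rem:non-uniqueness} shows. The normalization by $c_+(\tau)$ is designed exactly to cancel this growth, and Step 1 confirms that it does. I would take care there that the radius bound uses $\|\Gamma_\tau(W)\|^2=\tfrac12\|W^*(I+T_\tau^*T_\tau)W\|$ rather than the cruder $(1+\|T_\tau\|)/\sqrt2$ of Lemma~\ref{lem:closed-bounded-tau}. The cruder constant is larger than $c_+(\tau)$, so it would not produce the sharp constant $1$ in the stated inequality.
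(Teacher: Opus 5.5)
Your proposal is correct and follows essentially the same route as the paper. The metric axioms pass to the supremum of the fixed-model metrics $\overline{\delta}^{\tau}_{\mathrm{reg}}$, with separation read off from any single model. The bound comes from comparing each $\mathscr F_\tau(\cdot)$ with $\{0\}=\mathscr F_\tau(0)$ and using the radius estimate $\|\Gamma_\tau(W)\|\le c_+(\tau)\|\phi\|_{cb}^{1/2}$ from Lemma~\ref{lem:rad-bounds}. You prove the bound first, so the supremum is known to be finite before the metric axioms are checked; this is a sensible ordering that the paper leaves implicit.
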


\begin{proof}
Each $\overline{\delta}_{\mathrm{reg}}^{\tau}$ is a metric by Lemma~\ref{lem:delta-reg-tau-metric}.
A pointwise supremum of metrics is again a metric: the triangle inequality follows by taking suprema, and if $\delta_{\mathrm{reg}}(\phi,\psi)=0$ then in particular $\overline{\delta}_{\mathrm{reg}}^{\tau}(\phi,\psi)=0$ for any fixed model $(K,\tau)$, so $\phi=\psi$.

For the bound, fix $(K,\tau)\in\mathfrak U$ and use the Hausdorff triangle inequality:
\[
\delta_{\mathrm{reg}}^{\tau}(\phi,\psi)\le d_H(\mathscr F_\tau(\phi),\{0\})+d_H(\mathscr F_\tau(\psi),\{0\}).
\]
By the upper bound in Lemma~\ref{lem:rad-bounds}, one has
$ \delta_{\mathrm{reg}}^{\tau}(\phi,0)\le c_+(\tau)\|\phi\|_{cb}^{1/2}$ and similarly for $\psi$.
Dividing by $c_+(\tau)$ gives
\[
\overline{\delta}_{\mathrm{reg}}^{\tau}(\phi,\psi)\le \|\phi\|_{cb}^{1/2}+\|\psi\|_{cb}^{1/2}.
\]
Taking the supremum over $(K,\tau)\in\mathfrak U$ yields the claimed bound for $\delta_{\mathrm{reg}}$, and the radius estimate is the special case $\psi=0$.
\end{proof}

This leads to the following Kuratowski-type gluing construction.

\begin{definition}
Fix a CP map $\theta\in \mathcal C$ and define $\kappa_\theta:\mathcal C\to \ell^\infty(\mathcal C)$ by
\[
\kappa_\theta(\phi)(\eta):=\beta(\phi,\eta)-\beta(\theta,\eta)\qquad(\eta\in \mathcal C).
\]
Extend $\kappa_\theta$ to all of $\mathcal X$ by setting $\kappa_\theta(\phi):=0$ whenever $\phi\notin \mathcal C$. This discontinuous extension is intentional and drives the wedge phenomenon.
\end{definition}
The Kuratowski embedding realizes the Bures metric exactly inside $\ell^\infty$.
\begin{lemma}\label{lem:kuratowski-isometry-anchor}
For all $\phi,\psi\in \mathcal C$ one has $\|\kappa_\theta(\phi)-\kappa_\theta(\psi)\|_\infty=\beta(\phi,\psi),$ and 
$\|\kappa_\theta(\phi)\|_\infty=\beta(\phi,\theta).$
\end{lemma}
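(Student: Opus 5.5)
The argument is the standard Kuratowski/Fréchet computation. The first step is to check that $\kappa_\theta(\phi)$ really lies in $\ell^\infty(\mathcal C)$ for $\phi\in\mathcal C$. This uses only the facts recalled after Definition~\ref{def:bures}, namely that $\beta$ is a finite metric on $\mathcal C$. Indeed, $\beta(\phi,\eta)\le\|\phi-\eta\|_{cb}^{1/2}<\infty$ by \eqref{eq-KSW}. The triangle inequality then gives, for every $\eta\in\mathcal C$,
\[
|\kappa_\theta(\phi)(\eta)|=|\beta(\phi,\eta)-\beta(\theta,\eta)|\le\beta(\phi,\theta),
\]
so $\kappa_\theta(\phi)$ is a bounded function with $\|\kappa_\theta(\phi)\|_\infty\le\beta(\phi,\theta)$.

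For the isometry claim, I fix $\phi,\psi\in\mathcal C$ and note that the anchor terms cancel:
\[
(\kappa_\theta(\phi)-\kappa_\theta(\psi))(\eta)=\beta(\phi,\eta)-\beta(\psi,\eta).
\]
The reverse triangle inequality bounds this by $\beta(\phi,\psi)$ uniformly in $\eta$, which gives the inequality $\le$. For the reverse inequality I evaluate at $\eta=\psi\in\mathcal C$, which is an admissible index point. This yields $\beta(\phi,\psi)-\beta(\psi,\psi)=\beta(\phi,\psi)$, so the supremum is attained and equality holds.

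The norm identity is the special case $\psi=\theta$. Here $\kappa_\theta(\theta)(\eta)=\beta(\theta,\eta)-\beta(\theta,\eta)=0$, so $\kappa_\theta(\theta)=0$, and the first identity gives $\|\kappa_\theta(\phi)\|_\infty=\beta(\phi,\theta)$. Alternatively, evaluating directly at $\eta=\theta$ attains the upper bound from the first paragraph.

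There is no real obstacle here. The only point needing care is that the index set is exactly $\mathcal C$ and that $\theta,\phi,\psi$ all belong to it. Both the cancellation and the attainment of the supremum depend on this, and the discontinuous extension of $\kappa_\theta$ off $\mathcal C$ plays no role in this statement.
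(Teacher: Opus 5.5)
Your proof is correct and follows the same route as the paper: you bound $|\beta(\phi,\eta)-\beta(\psi,\eta)|$ by $\beta(\phi,\psi)$, get equality at $\eta=\psi$, and take $\psi=\theta$ for the second identity. The paper does exactly this; your added check that $\kappa_\theta(\phi)$ is a bounded function, so that it lies in $\ell^\infty(\mathcal C)$, is a detail the paper leaves implicit.
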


\begin{proof}
This is the standard Kuratowski embedding: $|\beta(\phi,\eta)-\beta(\psi,\eta)|\le \beta(\phi,\psi)$ for all $\eta$,
with equality at $\eta=\psi$. The second identity is the special case $\psi=\theta$.
\end{proof}
\begin{definition}
Define 
\[
\widetilde\delta_{\mathrm{reg}}(\phi,\psi):=
\begin{cases}
0,& \phi,\psi\in \mathcal C,\\
\rho(\phi),& \phi\notin\mathcal C,\ \psi\in \mathcal C,\\
\rho(\psi),& \phi\in\mathcal C,\ \psi\notin \mathcal C,\\
\delta_{\mathrm{reg}}(\phi,\psi),& \phi,\psi\notin \mathcal C,
\end{cases}
\]
where $\rho(\cdot)$ is as in Lemma~\ref{lem:deltareg-basic}.
\end{definition}
We notice that the function $\widetilde\delta_{\mathrm{reg}}$ satisfies symmetry and the triangle inequality, but it is a pseudometric:
it vanishes on $\mathcal C\times \mathcal C$ by construction.

\begin{definition}[Bures--Kuratowski family]\label{def:BK-family}
Fix parameters $\theta\in \mathcal C,$ $\lambda>0,$ $p\in[1,\infty],$ and $\alpha\in(0,1].$
Define the Bures--Kuratowski metric $\beta^{BK}_{\theta,\lambda,p,\alpha}$ on $\mathcal X$ by
\[
\beta^{BK}_{\theta,\lambda,p,\alpha}(\phi,\psi)
:=
\Bigl\|\bigl(
\|\kappa_\theta(\phi)-\kappa_\theta(\psi)\|_\infty,\ 
\lambda\,(\widetilde\delta_{\mathrm{reg}}(\phi,\psi))^\alpha
\bigr)\Bigr\|_{\ell^p(\R^2)}.
\]
\end{definition}
The BK construction yields a genuine extension of the Bures metric.
\begin{theorem}
\label{prop:BK-metric}
For every choice of parameters $(\theta,\lambda,p,\alpha)$, the function $\beta^{BK}_{\theta,\lambda,p,\alpha}$ is a metric on $\mathcal X$.
Moreover, for $\phi,\psi\in \mathcal C$ one has $\beta^{BK}_{\theta,\lambda,p,\alpha}(\phi,\psi)=\beta(\phi,\psi).$
\end{theorem}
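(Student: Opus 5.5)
The plan is to write $\beta^{BK}=\|(d_1,d_2)\|_{\ell^p}$ with
\[
d_1(\phi,\psi):=\|\kappa_\theta(\phi)-\kappa_\theta(\psi)\|_\infty,\qquad d_2(\phi,\psi):=\lambda\,\widetilde\delta_{\mathrm{reg}}(\phi,\psi)^\alpha .
\]
First I check that $d_1$ and $d_2$ are pseudometrics on $\mathcal X$. Then I use the fact that the $\ell^p$-combination of two pseudometrics is a pseudometric, and finally I prove separation.

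For $d_1$ this is immediate: $\kappa_\theta$ maps into $\ell^\infty(\mathcal C)$, and the map is well defined because $|\beta(\phi,\eta)-\beta(\theta,\eta)|\le\beta(\phi,\theta)<\infty$. So $d_1$ is the pullback of a norm distance and is a pseudometric.

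For $d_2$ I first confirm the triangle inequality for $\widetilde\delta_{\mathrm{reg}}$, which the paper only asserts. I split into cases according to membership of $\phi,\psi,\chi$ in $\mathcal C$.
\begin{itemize}
\item If all three points lie outside $\mathcal C$, the inequality holds because $\delta_{\mathrm{reg}}$ is a metric (Lemma~\ref{lem:deltareg-basic}).
\item The mixed cases reduce to $\rho(\phi)\le\delta_{\mathrm{reg}}(\phi,\psi)+\rho(\psi)$ and $\delta_{\mathrm{reg}}(\phi,\psi)\le\rho(\phi)+\rho(\psi)$. Both follow from the triangle inequality for $\delta_{\mathrm{reg}}$ with $0$ as the intermediate point, since $\rho=\delta_{\mathrm{reg}}(\cdot,0)$.
\item The cases with a zero term are trivial.
\end{itemize}
Since $t\mapsto t^\alpha$ with $\alpha\in(0,1]$ is increasing, concave, subadditive and vanishes at $0$, the snowflake $\widetilde\delta_{\mathrm{reg}}^\alpha$ is again a pseudometric, and so is $d_2$. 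The combination step is then Minkowski's inequality in $\ell^p(\R^2)$ together with monotonicity of the $\ell^p$-norm on the positive quadrant:
\[
\|(d_1(\phi,\chi),d_2(\phi,\chi))\|_p\le\|(d_1(\phi,\psi)+d_1(\psi,\chi),\,d_2(\phi,\psi)+d_2(\psi,\chi))\|_p\le\|(d_1,d_2)(\phi,\psi)\|_p+\|(d_1,d_2)(\psi,\chi)\|_p .
\]
Symmetry is clear.

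Separation is the step I expect to need the most care. Suppose $\beta^{BK}(\phi,\psi)=0$, so that $d_1=d_2=0$.
\begin{itemize}
\item If $\phi,\psi\notin\mathcal C$, then $\delta_{\mathrm{reg}}(\phi,\psi)=0$, so $\phi=\psi$ by Lemma~\ref{lem:deltareg-basic}.
\item If $\phi,\psi\in\mathcal C$, then Lemma~\ref{lem:kuratowski-isometry-anchor} gives $\beta(\phi,\psi)=0$, so $\phi=\psi$.
\item If $\phi\notin\mathcal C$ and $\psi\in\mathcal C$, then $\rho(\phi)=0$. This means $\delta_{\mathrm{reg}}(\phi,0)=0$, so $\phi=0\in\mathcal C$, a contradiction.
\end{itemize}
The point to get right in the mixed case is that $0\in\mathcal C$, so the non-CP sector is genuinely separated from the cone by $\rho>0$. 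The vanishing of $\kappa_\theta(\phi)$ there would only force $\psi=\theta$, and that alone does not contradict anything; the contradiction has to come from $\rho$.

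Finally, for $\phi,\psi\in\mathcal C$ we have $d_2=0$ by definition of $\widetilde\delta_{\mathrm{reg}}$, so $\beta^{BK}(\phi,\psi)=\|(d_1,0)\|_p=d_1(\phi,\psi)=\beta(\phi,\psi)$ by Lemma~\ref{lem:kuratowski-isometry-anchor}, for every $p\in[1,\infty]$.
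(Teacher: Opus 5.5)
Your proof is correct and follows essentially the same route as the paper: you get the triangle inequality coordinatewise (using subadditivity of $t\mapsto t^\alpha$) and combine via Minkowski, you prove separation by the same three membership cases with $\rho>0$ off the cone, and you obtain CP-reduction from the Kuratowski isometry. The only difference is that you explicitly check two things the paper only asserts: the triangle inequality for $\widetilde\delta_{\mathrm{reg}}$ (using $0$ as the intermediate point) and the well-definedness of $\kappa_\theta$.
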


\begin{proof}
Both coordinates $d_1(\phi,\psi):=\|\kappa_\theta(\phi)-\kappa_\theta(\psi)\|_\infty$ and
$d_2(\phi,\psi):=(\widetilde\delta_{\mathrm{reg}}(\phi,\psi))^\alpha$
satisfy the triangle inequality (for $d_2$ this uses that $\widetilde\delta_{\mathrm{reg}}$ is a pseudometric and $t\mapsto t^\alpha$ is subadditive for $\alpha\in(0,1]$).
Hence any $\ell^p$-combination satisfies the triangle inequality.

If $\beta^{BK}_{\theta,\lambda,p,\alpha}(\phi,\psi)=0$ then $d_1(\phi,\psi)=0$ and $d_2(\phi,\psi)=0$.
If $\phi,\psi\in \mathcal C$, then $d_1(\phi,\psi)=\beta(\phi,\psi)$ by Lemma~\ref{lem:kuratowski-isometry-anchor}, hence $\phi=\psi$.
If $\phi,\psi\notin \mathcal C$, then $d_2(\phi,\psi)=\delta_{\mathrm{reg}}(\phi,\psi)^\alpha=0$ forces $\delta_{\mathrm{reg}}(\phi,\psi)=0$, hence $\phi=\psi$.
If exactly one of $\phi,\psi$ lies in $\mathcal C$, then $d_2(\phi,\psi)=\rho(\text{non-CP map})^\alpha>0$ (since $\rho(\eta)=0$ iff $\eta=0\in\mathcal C$), a contradiction.
Thus $\phi=\psi$, and $\beta^{BK}_{\theta,\lambda,p,\alpha}$ is a metric.

Finally, if $\phi,\psi\in \mathcal C$ then $\widetilde\delta_{\mathrm{reg}}(\phi,\psi)=0$ and
$\|\kappa_\theta(\phi)-\kappa_\theta(\psi)\|_\infty=\beta(\phi,\psi)$, proving CP-reduction.
\end{proof}

The next two propositions show that no BK metric admits a global KSW-type upper bound on $\mathcal X$, while a KSW-type lower bound holds on the non-CP sector.

\begin{proposition}
\label{prop:no-upper-KSW}
Fix $\theta\in \mathcal C$, $\lambda>0$, $p\in[1,\infty]$, and $\alpha\in(0,1]$.
Then there does not exist a function $f:[0,\infty)\to[0,\infty)$ with $f(t)\to 0$ as $t\downarrow 0$
 such that
\[
\beta^{BK}_{\theta,\lambda,p,\alpha}(\phi,\psi)\le f\bigl(\|\phi-\psi\|_{cb}\bigr)
\qquad\forall\, \phi,\psi\in \mathcal X.
\]
In particular, there is no constant $C>0$ such that $\beta^{BK}_{\theta,\lambda,p,\alpha}(\phi,\psi)\le C\,\|\phi-\psi\|_{cb}^{1/2}$
for all $\phi,\psi\in \mathcal X$.
\end{proposition}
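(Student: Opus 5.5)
We argue by contradiction, exploiting the discontinuity built into the construction: the BK distance from a CP map to a non-CP map always contains the term $\lambda\rho(\cdot)^\alpha$ and, via $\kappa_\theta$, the radial term $\beta(\cdot,\theta)$. So we look for pairs $\phi_n\in\mathcal C$, $\psi_n\notin\mathcal C$ with $\|\phi_n-\psi_n\|_{cb}\to0$ while $\beta^{BK}_{\theta,\lambda,p,\alpha}(\phi_n,\psi_n)$ stays bounded below. Suppose $f$ exists with $f(t)\to0$ as $t\downarrow0$.

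First compute the mixed distance. For $\phi\in\mathcal C$ and $\psi\notin\mathcal C$ we have $\kappa_\theta(\psi)=0$, so $\|\kappa_\theta(\phi)-\kappa_\theta(\psi)\|_\infty=\|\kappa_\theta(\phi)\|_\infty=\beta(\phi,\theta)$ by Lemma~\ref{lem:kuratowski-isometry-anchor}. Also $\widetilde\delta_{\mathrm{reg}}(\phi,\psi)=\rho(\psi)$. Hence
\[
\beta^{BK}_{\theta,\lambda,p,\alpha}(\phi,\psi)=\bigl\|\bigl(\beta(\phi,\theta),\ \lambda\rho(\psi)^\alpha\bigr)\bigr\|_{\ell^p}\ \ge\ \max\{\beta(\phi,\theta),\,\lambda\rho(\psi)^\alpha\}.
\]

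Next choose the sequence so that the second coordinate is bounded below. Fix any $\phi\in\mathcal C$ with $\|\phi\|_{cb}=1$, for example $\phi(a)=\omega(a)I_H$ with $\omega$ a state. Fix also a non-CP map $\eta\in\mathcal X$; a self-adjoint map that is not positive works, say $\eta(a)=-\omega(a)I_H$. Set $\psi_n:=\phi+\tfrac1n\eta$. Then $\psi_n(1)=(1-\tfrac1n)I_H$, so we need a perturbation that is genuinely non-CP while remaining small. A simpler choice is $\psi_n:=-\tfrac1n\phi+\phi=\ldots$, but that stays CP. We therefore take $\phi_n:=\phi$ and $\psi_n:=\phi-\tfrac{2}{n}\omega(\cdot)I\ldots$; this also stays CP, since it is a positive multiple of $\phi$.

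This is the main obstacle: near an interior point of the cone, small perturbations remain CP. The fix is to perturb at the boundary point $0$ instead. Take $\phi_n:=0\in\mathcal C$ and $\psi_n:=-\tfrac1n\phi\notin\mathcal C$, which is non-CP because $\psi_n(1)=-\tfrac1nI\not\ge0$. Then $\|\phi_n-\psi_n\|_{cb}=1/n\to0$. However, $\rho(\psi_n)\to0$ by Lemma~\ref{lem:deltareg-basic}, so the $\rho$ term does not help here. Instead we use the radial term $\beta(0,\theta)=\|\theta(1)\|^{1/2}$, which is fixed.

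If $\theta\ne0$, the bound $\beta^{BK}\ge\beta(0,\theta)>0$ contradicts $f(1/n)\to0$. If $\theta=0$, move the roles instead: take $\phi_n:=\phi$ with $\beta(\phi,0)=1$, and $\psi_n:=\phi+\tfrac1n\xi$ with $\xi$ chosen so that $\phi+\tfrac1n\xi$ is non-CP for all $n$. For instance, when $\dim H\ge1$, use $\phi=\omega(\cdot)P$ with $P$ a rank-one projection and $\xi=-\omega(\cdot)Q$ with $Q$ a rank-one projection orthogonal to $P$ (or, if $\dim H=1$, use a non-positive functional vanishing at $1$ on a suitable $A$). Then $\psi_n(1)=P-\tfrac1nQ\not\ge0$, and $\beta^{BK}(\phi_n,\psi_n)\ge\beta(\phi,0)=1$.

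The remaining delicate point is the degenerate case $\theta=0$ with $A=\C=H$. There every $\psi$ near a CP map $\phi\neq0$ is CP, so we fall back on the $\rho$ term: take $\phi_n=0$ and $\psi_n=-\tfrac1n$. Then we need $\lambda\rho(\psi_n)^\alpha$ to dominate $f(1/n)$, which fails. Verifying which configuration gives the contradiction in every $(A,H,\theta)$ is the step I expect to require the most care. The "in particular" clause then follows with $f(t)=Ct^{1/2}$.
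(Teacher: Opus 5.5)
Your case $\theta\neq0$ is correct: with $\phi_n=0$ and $\psi_n=-\tfrac1n\,\omega(\cdot)I_H\notin\mathcal C$ you get $\|\phi_n-\psi_n\|_{cb}=1/n$ and $\beta^{BK}_{\theta,\lambda,p,\alpha}(0,\psi_n)\ge\beta(0,\theta)>0$. This is essentially the paper's argument in that case. Your projection construction for $\theta=0$ also works, but only when $\dim H\ge2$, since two orthogonal rank-one projections are needed.

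\textbf{The gap.} For $\theta=0$ and $\dim H=1$ the proposal is incomplete. You appeal to ``a suitable $A$'', but $A$ is fixed by the statement. For $A=H=\C$, $\theta=0$ you end without any contradiction. In that case no self-adjoint perturbation can work:
\begin{itemize}
\item near $c>0$, real perturbations stay in $\mathcal C\cong[0,\infty)$;
\item near $0$, one has $\beta(0,\theta)=0$ and $\lambda\rho(\psi_n)^\alpha\to0$.
\end{itemize}

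\textbf{The missing idea} is to perturb in a non-self-adjoint direction. Your claims that small perturbations of a nonzero CP map stay CP are false. This covers both ``near an interior point of the cone'' and ``every $\psi$ near a CP map $\phi\neq0$ is CP'' for $A=H=\C$. Every CP map $\psi$ satisfies $\psi(1)=\psi(1)^*$. So for any $\phi\in\mathcal C$ the maps $\psi_n:=\phi+\tfrac{i}{n}\,\omega(\cdot)I_H$ lie outside $\mathcal C$, while $\|\psi_n-\phi\|_{cb}=1/n$. Thus $\mathcal C$ has empty cb-interior in $\mathcal X$; for $A=H=\C$, already $1+\tfrac{i}{n}\notin[0,\infty)$.

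This is exactly what the paper does:
\begin{enumerate}
\item choose $\phi\in\mathcal C$ with $\phi\neq\theta$, namely $\phi=0$ if $\theta\neq0$ and $\phi=\omega(\cdot)I_H$ if $\theta=0$;
\item set $\psi_n=\phi+\tfrac{i}{n}\,\omega(\cdot)I_H$;
\item apply your own mixed-distance bound $\beta^{BK}_{\theta,\lambda,p,\alpha}(\phi,\psi_n)\ge\beta(\phi,\theta)>0$, which contradicts $f(1/n)\to0$.
\end{enumerate}
This works uniformly in $(A,H,\theta)$, with no case analysis on $\dim H$ or $A$. The $\rho$ term is never needed.
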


\begin{proof}
Let $\omega$ be a state on $A$, and define the unital completely positive map $\eta(a):=\omega(a)I_H$, $a\in A$.
Choose $\phi:=0$ if $ \theta\neq 0$, and  $\phi:=\eta$ if $\theta=0$. Then $\phi\in \mathcal C$ and $\phi\neq \theta$.
For $n\in\mathbb N$, define
\[
\psi_n(a):=\phi(a)+\frac{i}{n}\,\eta(a)\qquad(a\in A).
\]
Since $\phi(1)$ is selfadjoint and $\psi_n(1)$ is not, one has $\psi_n\notin \mathcal C$ for every $n$.
Moreover, $\|\psi_n-\phi\|_{cb}=\frac{1}{n}\,\|\eta\|_{cb}=\frac{1}{n},$
because $\eta$ is unital completely positive.

On the other hand, $\kappa_\theta(\psi_n)=0$ by definition, while $\phi\in\mathcal C$ gives $\|\kappa_\theta(\phi)\|_\infty=\beta(\phi,\theta)$
by Lemma~\ref{lem:kuratowski-isometry-anchor}. Therefore
\[
\beta^{BK}_{\theta,\lambda,p,\alpha}(\phi,\psi_n)
=\Bigl\|\bigl(\|\kappa_\theta(\phi)\|_\infty,\,\lambda\rho(\psi_n)^\alpha\bigr)\Bigr\|_{\ell^p(\mathbb R^2)}
\ge \|\kappa_\theta(\phi)\|_\infty
=\beta(\phi,\theta)>0.
\]
Hence any function $f$ as above would satisfy
\[
f\Bigl(\frac{1}{n}\Bigr)\ge \beta^{BK}_{\theta,\lambda,p,\alpha}(\phi,\psi_n)\ge \beta(\phi,\theta)>0
\qquad\forall n\in\mathbb N,
\]
contradicting $f(t)\to 0$ as $t\downarrow 0$.
The final statement follows by taking $f(t)=Ct^{1/2}$.
\end{proof}

\begin{proposition}
\label{prop:lower-KSW-nonCP}
Fix $(K,\tau)\in\mathfrak U$.
Then for all $\phi,\psi\in \mathcal X$,
\[
\|\phi-\psi\|_{cb}
\le
\sqrt2\,\|\tau\|_{cb}\, c_+(\tau)\,\bigl(\|\phi\|_{cb}^{1/2}+\|\psi\|_{cb}^{1/2}\bigr)\,\delta_{\mathrm{reg}}(\phi,\psi).
\]
Consequently, for every $\theta\in \mathcal C$, $\lambda>0$, $p\in[1,\infty]$, $\alpha\in(0,1]$, and all
$\phi,\psi\in\mathcal X\setminus \mathcal C$,
\[
\beta^{BK}_{\theta,\lambda,p,\alpha}(\phi,\psi)
\ge
\lambda\left(
\frac{\|\phi-\psi\|_{cb}}
{\sqrt2\,\|\tau\|_{cb}\, c_+(\tau)\,\bigl(\|\phi\|_{cb}^{1/2}+\|\psi\|_{cb}^{1/2}\bigr)}
\right)^\alpha.
\]
\end{proposition}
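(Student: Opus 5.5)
Fix the model $(K,\tau)\in\mathfrak U$ throughout. The plan is to bound $\|\phi-\psi\|_{cb}$ by the fixed-model Hausdorff coordinate $\delta^{\tau}_{\mathrm{reg}}(\phi,\psi)$, and then pass to $\delta_{\mathrm{reg}}$ via
\[
\delta^\tau_{\mathrm{reg}}=c_+(\tau)\,\overline\delta^\tau_{\mathrm{reg}}\le c_+(\tau)\,\delta_{\mathrm{reg}}.
\]
The second inequality will then follow from the definition of the BK metric. For $\phi,\psi\notin\mathcal C$ we have $\widetilde\delta_{\mathrm{reg}}(\phi,\psi)=\delta_{\mathrm{reg}}(\phi,\psi)$, so $\beta^{BK}_{\theta,\lambda,p,\alpha}(\phi,\psi)$, being an $\ell^p$-norm, dominates its second coordinate $\lambda\,\delta_{\mathrm{reg}}(\phi,\psi)^\alpha$. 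We then insert the first inequality and use monotonicity of $t\mapsto t^\alpha$. If $\phi=\psi=0$ the statement is trivial, so throughout we may assume the denominator is positive.

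For the first inequality, I would take arbitrary implementers $W\in\mathscr R_\tau(\phi)$ and $W'\in\mathscr R_\tau(\psi)$. Then $\phi(a)=W^*\tau(a)W$ and $\psi(a)=W'^*\tau(a)W'$; this also covers the zero map, where the implementer is $0$. The cb-norm of a difference is controlled by the identity
\[
\phi(a)-\psi(a)=(W-W')^*\tau(a)W+W'^*\tau(a)(W-W').
\]
Amplifying to matrix levels gives
\[
\|\phi-\psi\|_{cb}\le \|\tau\|_{cb}\,\|W-W'\|\bigl(\|W\|+\|W'\|\bigr)=\|\tau\|_{cb}\,\|W-W'\|\bigl(\|\phi\|_{cb}^{1/2}+\|\psi\|_{cb}^{1/2}\bigr).
\]
Here $\tau$ is a homomorphism into $B(K)$ which is completely bounded. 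I would verify this via BMS (regular homomorphisms have $\tau(u)$ with fixed modulus, hence are bounded on unitaries and so on $A$), or simply accept $\|\tau\|_{cb}$ as given since the statement uses it. Next, the first coordinate of $\Gamma_\tau$ gives
\[
\|W-W'\|\le \sqrt2\,\|\Gamma_\tau(W)-\Gamma_\tau(W')\|,
\]
since $\|\Gamma_\tau(X)\|\ge \|X\|/\sqrt2$.

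The main obstacle is converting this into the Hausdorff distance, which requires an infimum over pairs rather than a fixed pair. Given $\varepsilon>0$ and any $W\in\mathscr R_\tau(\phi)$, the definition of $d_H$ gives some $W'\in\mathscr R_\tau(\psi)$ with
\[
\|\Gamma_\tau(W)-\Gamma_\tau(W')\|\le d_H\bigl(\mathscr F_\tau(\phi),\mathscr F_\tau(\psi)\bigr)+\varepsilon.
\]
The displayed estimate above holds for every such pair, so
\[
\|\phi-\psi\|_{cb}\le \sqrt2\,\|\tau\|_{cb}\bigl(\|\phi\|_{cb}^{1/2}+\|\psi\|_{cb}^{1/2}\bigr)\bigl(\delta^\tau_{\mathrm{reg}}(\phi,\psi)+\varepsilon\bigr).
\]
Letting $\varepsilon\to0$ and applying $\delta^\tau_{\mathrm{reg}}\le c_+(\tau)\delta_{\mathrm{reg}}$ yields the claim.
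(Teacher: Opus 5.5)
Your proposal is correct and follows the paper's proof essentially step for step. Both arguments use the same identity $\phi(a)-\psi(a)=(W-W')^*\tau(a)W+W'^*\tau(a)(W-W')$ amplified to matrix levels, the same $\varepsilon$-close pair $\Gamma_\tau(W),\Gamma_\tau(W')$ extracted from the Hausdorff distance, the factor $\sqrt2$ from the first coordinate of $\Gamma_\tau$, the comparison $\delta^{\tau}_{\mathrm{reg}}\le c_+(\tau)\,\delta_{\mathrm{reg}}$, and the fact that on $\mathcal X\setminus\mathcal C$ the BK metric reduces to (indeed equals) $\lambda\,\delta_{\mathrm{reg}}^\alpha$.

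One minor point: your parenthetical argument bounding $\tau$ on unitaries gives only $\|\tau\|<\infty$, not $\|\tau\|_{cb}<\infty$. Complete boundedness can be obtained from the fact that regularity forces $\tau(a)^*\tau(b)=\tau(1)^*\tau(a^*b)$, which yields $\|\tau\|_{cb}=\|\tau(1)\|$. Since the paper also takes $\|\tau\|_{cb}$ for granted, simply accepting it as part of the statement, as you suggest, is adequate.
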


\begin{proof}
Fix $\varepsilon>0$. By definition of the Hausdorff distance $\delta_{\mathrm{reg}}^{\tau}$,
for any $X\in\mathscr F_{\tau}(\phi)$ there exists $Y\in\mathscr F_{\tau}(\psi)$ such that
\[
\|X-Y\|\le \delta_{\mathrm{reg}}^{\tau}(\phi,\psi)+\varepsilon.
\]
Choose $W\in\mathscr R_{\tau}(\phi)$ with $X=\Gamma_{\tau}(W)$, and pick $W'\in\mathscr R_{\tau}(\psi)$ with
$Y=\Gamma_{\tau}(W')$ satisfying the bound. Since $\Gamma_{\tau}$ remembers $W$ in its first coordinate,
\[
\|W-W'\|\le \sqrt2\,\|\Gamma_{\tau}(W)-\Gamma_{\tau}(W')\|
\le \sqrt2\bigl(\delta_{\mathrm{reg}}^{\tau}(\phi,\psi)+\varepsilon\bigr).
\]
Now $\phi(a)=W^*\tau(a)W$ and $\psi(a)=W'^*\tau(a)W'$ for all $a\in A$, hence for any $n$ and any $x\in M_n(A)$,
\[
\|\phi^{(n)}(x)-\psi^{(n)}(x)\|
=
\|(W-W')^*\tau^{(n)}(x)W + W'^*\tau^{(n)}(x)(W-W')\|
\le \|\tau\|_{cb}\,\|x\|\,(\|W\|+\|W'\|)\,\|W-W'\|.
\]
Taking suprema over $n$ and $x$ yields
\[
\|\phi-\psi\|_{cb}
\le \|\tau\|_{cb}\,(\|W\|+\|W'\|)\,\|W-W'\|.
\]
By definition of $\mathscr R_{\tau}(\cdot)$, $\|W\|=\|\phi\|_{cb}^{1/2}$ and $\|W'\|=\|\psi\|_{cb}^{1/2}$.
Letting $\varepsilon\downarrow 0$ gives
\[
\|\phi-\psi\|_{cb}
\le
\sqrt2\,\|\tau\|_{cb}\,\bigl(\|\phi\|_{cb}^{1/2}+\|\psi\|_{cb}^{1/2}\bigr)\,\delta_{\mathrm{reg}}^{\tau}(\phi,\psi).
\]
Since
\[
\delta_{\mathrm{reg}}(\phi,\psi)
=\sup_{(L,\sigma)\in\mathfrak U}\frac{1}{c_+(\sigma)}\,\delta_{\mathrm{reg}}^{\sigma}(\phi,\psi)
\ge \frac{1}{c_+(\tau)}\,\delta_{\mathrm{reg}}^{\tau}(\phi,\psi),
\]
it follows that $\delta_{\mathrm{reg}}^{\tau}(\phi,\psi)\le c_+(\tau)\,\delta_{\mathrm{reg}}(\phi,\psi),$
and therefore
\[
\|\phi-\psi\|_{cb}
\le
\sqrt2\,\|\tau\|_{cb}\, c_+(\tau)\,\bigl(\|\phi\|_{cb}^{1/2}+\|\psi\|_{cb}^{1/2}\bigr)\,\delta_{\mathrm{reg}}(\phi,\psi).
\]
This proves the first inequality.

If now $\phi,\psi\notin\mathcal C$, then by definition of the BK metric,
\[
\beta^{BK}_{\theta,\lambda,p,\alpha}(\phi,\psi)
=\bigl\|(0,\lambda\,\delta_{\mathrm{reg}}(\phi,\psi)^\alpha)\bigr\|_{\ell^p(\mathbb R^2)}
=\lambda\,\delta_{\mathrm{reg}}(\phi,\psi)^\alpha.
\]
Hence
\[
\delta_{\mathrm{reg}}(\phi,\psi)=\lambda^{-1/\alpha}\bigl(\beta^{BK}_{\theta,\lambda,p,\alpha}(\phi,\psi)\bigr)^{1/\alpha},
\]
and substituting this into the first inequality yields the stated estimate on
$\mathcal X\setminus\mathcal C$.
\end{proof}

\section{The Bures--Kuratowski metric bundle}

With notation as in the previous section, we first show that the BK topology is independent of the choice of $\ell^p$-norm and scaling parameter.
\begin{proposition}
\label{prop:p-lambda-equivalent}
Fix $\theta\in \mathcal C$ and $\alpha\in(0,1]$. For any $p,q\in[1,\infty]$ and $\lambda,\mu>0$,
the metrics $\beta^{BK}_{\theta,\lambda,p,\alpha}$ and $\beta^{BK}_{\theta,\mu,q,\alpha}$ are bi-Lipschitz equivalent.
In particular, they induce the same topology on $\mathcal X$.
\end{proposition}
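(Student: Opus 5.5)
Both metrics are $\ell^p$ and $\ell^q$ combinations of the same two nonnegative coordinates, differing only in the weight on the second one. So I would compare them pointwise, using only norm equivalence on $\R^2$ and the two scaling constants. For $\phi,\psi\in\mathcal X$ write
\[
d_1(\phi,\psi):=\|\kappa_\theta(\phi)-\kappa_\theta(\psi)\|_\infty,\qquad d_2(\phi,\psi):=\widetilde\delta_{\mathrm{reg}}(\phi,\psi)^\alpha,
\]
as in the proof of Theorem~\ref{prop:BK-metric}. Then
\[
\beta^{BK}_{\theta,\lambda,p,\alpha}(\phi,\psi)=\|(d_1,\lambda d_2)\|_{p},\qquad \beta^{BK}_{\theta,\mu,q,\alpha}(\phi,\psi)=\|(d_1,\mu d_2)\|_{q}.
\]
Both $d_1$ and $d_2$ are independent of $p$, $q$, $\lambda$ and $\mu$, because $\theta$ and $\alpha$ are fixed.

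The argument has two steps. First, change the weight. For nonnegative $x,y$, set $m:=\min(1,\mu/\lambda)$ and $M:=\max(1,\mu/\lambda)$. Since $\ell^q$ norms are monotone in each coordinate of a vector with nonnegative entries,
\[
m\,\|(x,\lambda y)\|_q\le \|(x,\mu y)\|_q\le M\,\|(x,\lambda y)\|_q .
\]
Second, change the exponent. On $\R^2$ the standard inequalities give
\[
\|v\|_\infty\le\|v\|_r\le 2^{1/r}\|v\|_\infty\qquad (r\in[1,\infty]),
\]
so
\[
2^{-1/p}\|v\|_p\le\|v\|_q\le 2^{1/q}\|v\|_p,
\]
with the convention $2^{1/\infty}=1$.

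Combining the two steps with $v=(d_1,\lambda d_2)$ yields
\[
2^{-1/p}\min\Bigl(1,\tfrac{\mu}{\lambda}\Bigr)\,\beta^{BK}_{\theta,\lambda,p,\alpha}\ \le\ \beta^{BK}_{\theta,\mu,q,\alpha}\ \le\ 2^{1/q}\max\Bigl(1,\tfrac{\mu}{\lambda}\Bigr)\,\beta^{BK}_{\theta,\lambda,p,\alpha}
\]
on $\mathcal X\times\mathcal X$. This is bi-Lipschitz equivalence, and equality of the induced topologies follows at once, since bi-Lipschitz equivalent metrics have nested balls.

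I expect no real obstacle. The only point needing care is that $\alpha$, not just $\theta$, must stay fixed: the coordinate $d_2$ depends on $\alpha$, which is exactly why $\alpha$ is held constant in the statement. Beyond that, one only has to handle the case $p=\infty$ or $q=\infty$ in the norm-equivalence constants.
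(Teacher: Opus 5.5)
Your proof is correct and follows essentially the same route as the paper: you compare the weights $\lambda$ and $\mu$ via coordinatewise monotonicity of the $\ell^q$ norm, then use norm equivalence on $\R^2$, both applied to the fixed pair $\bigl(\|\kappa_\theta(\phi)-\kappa_\theta(\psi)\|_\infty,\ \widetilde\delta_{\mathrm{reg}}(\phi,\psi)^\alpha\bigr)$. The only difference is that you make the paper's unspecified constant $C_{p,q}$ explicit through $\|v\|_\infty\le\|v\|_r\le 2^{1/r}\|v\|_\infty$, which gives $2^{-1/p}$ in the lower bound and $2^{1/q}$ in the upper bound.
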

\begin{proof}
Since all norms on $\R^2$ are equivalent, there exists a constant $C_{p,q}\ge 1$ such that
\[
C_{p,q}^{-1}\|(a,b)\|_{\ell^p}\le \|(a,b)\|_{\ell^q}\le C_{p,q}\|(a,b)\|_{\ell^p}
\qquad\text{for all }(a,b)\in\R^2.
\]
Also, for every $\lambda,\mu>0$,
\[
\min\{1,\mu/\lambda\}\,\|(a,\lambda b)\|_{\ell^q}
\le
\|(a,\mu b)\|_{\ell^q}
\le
\max\{1,\mu/\lambda\}\,\|(a,\lambda b)\|_{\ell^q}
\]
for all $(a,b)\in\R^2$. Now apply these inequalities with $a=\|\kappa_\theta(\phi)-\kappa_\theta(\psi)\|_\infty,$
$b=\widetilde\delta_{\mathrm{reg}}(\phi,\psi)^\alpha.$
Then, for all $\phi,\psi\in\mathcal X$,
\[
C_{p,q}^{-1}\min\{1,\mu/\lambda\}\,
\beta^{BK}_{\theta,\lambda,p,\alpha}(\phi,\psi)
\le
\beta^{BK}_{\theta,\mu,q,\alpha}(\phi,\psi)
\le
C_{p,q}\max\{1,\mu/\lambda\}\,
\beta^{BK}_{\theta,\lambda,p,\alpha}(\phi,\psi),
\]
and therefore the two metrics are bi-Lipschitz equivalent.
\end{proof}

The exponent parameter does not affect the induced topology.
\begin{proposition}\label{prop:alpha-topology}
Fix $\theta\in \mathcal C$, $\lambda>0$, and $p\in[1,\infty]$. If $\alpha,\alpha'\in(0,1]$, then
$\beta^{BK}_{\theta,\lambda,p,\alpha}$ and $\beta^{BK}_{\theta,\lambda,p,\alpha'}$ induce the same topology on $\mathcal X$.
\end{proposition}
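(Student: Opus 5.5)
By Proposition~\ref{prop:p-lambda-equivalent} we may take $p=\infty$ and a fixed $\lambda$; changing these gives bi-Lipschitz equivalent metrics. Write $d_1(\phi,\psi):=\|\kappa_\theta(\phi)-\kappa_\theta(\psi)\|_\infty$ and $d_2:=\widetilde\delta_{\mathrm{reg}}$. Then
\[
\beta^{BK}_{\theta,\lambda,\infty,\alpha}=\max\{d_1,\lambda d_2^{\alpha}\},
\qquad
\beta^{BK}_{\theta,\lambda,\infty,\alpha'}=\max\{d_1,\lambda d_2^{\alpha'}\}.
\]

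The plan is to compare convergence of sequences, or equivalently open balls. Fix $\phi\in\mathcal X$ and a sequence (or net) $\phi_n$. Since $t\mapsto t^\alpha$ and $t\mapsto t^{\alpha'}$ are homeomorphisms of $[0,\infty)$ fixing $0$, we have $d_2(\phi_n,\phi)^\alpha\to0$ if and only if $d_2(\phi_n,\phi)\to0$, if and only if $d_2(\phi_n,\phi)^{\alpha'}\to0$. Hence
\[
\beta^{BK}_{\theta,\lambda,\infty,\alpha}(\phi_n,\phi)\to0
\iff
d_1(\phi_n,\phi)\to0 \text{ and } d_2(\phi_n,\phi)\to0
\iff
\beta^{BK}_{\theta,\lambda,\infty,\alpha'}(\phi_n,\phi)\to0.
\]
Since both are metric topologies, they are determined by convergent sequences, so the two topologies coincide.

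To avoid relying on sequences, we can phrase this with balls instead. Given $\varepsilon>0$, set $\delta:=\min\{\varepsilon,\lambda(\varepsilon/\lambda)^{\alpha'/\alpha}\}$. If $\max\{d_1,\lambda d_2^{\alpha'}\}<\delta$, then $d_1<\varepsilon$ and $d_2<(\varepsilon/\lambda)^{1/\alpha}$, so $\lambda d_2^{\alpha}<\varepsilon$. Thus every $\alpha$-ball about $\phi$ contains an $\alpha'$-ball about $\phi$, and by symmetry the reverse holds too. Since the modulus $\delta$ does not depend on $\phi$, this in fact shows the two metrics are uniformly equivalent.

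I do not expect a real obstacle. The only point needing care is that the maps $t\mapsto t^{\alpha/\alpha'}$ are not Lipschitz near $0$, so we cannot hope for bi-Lipschitz equivalence and must argue only topologically, or uniformly, as above.
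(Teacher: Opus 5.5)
Your proof is correct and is essentially the paper's argument: the paper also bounds each coordinate by the metric and uses continuity of $t\mapsto t^{\alpha'/\alpha}$ at $0$ to get a ball inclusion $B_{d_\alpha}(\phi,\delta)\subseteq B_{d_{\alpha'}}(\phi,\varepsilon)$, with $\delta$ independent of $\phi$. The only difference is that you first reduce to $p=\infty$ via Proposition~\ref{prop:p-lambda-equivalent}, whereas the paper handles general $p$ directly through the modulus $\omega_{\alpha,\alpha'}(t)=\|(t,\lambda^{1-\alpha'/\alpha}t^{\alpha'/\alpha})\|_{\ell^p}$ and coordinatewise monotonicity of the $\ell^p$-norm.
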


\begin{proof}
Write $d_\alpha(\phi,\psi):=\beta^{BK}_{\theta,\lambda,p,\alpha}(\phi,\psi),$
$d_{\alpha'}(\phi,\psi):=\beta^{BK}_{\theta,\lambda,p,\alpha'}(\phi,\psi),$ for simplicity, 
and set $u(\phi,\psi):=\|\kappa_\theta(\phi)-\kappa_\theta(\psi)\|_\infty,$ and
$v(\phi,\psi):=\widetilde\delta_{\mathrm{reg}}(\phi,\psi).$
Then $d_\alpha(\phi,\psi)=
\bigl\|\bigl(u(\phi,\psi),\,\lambda\,v(\phi,\psi)^\alpha\bigr)\bigr\|_{\ell^p},$ and similarly $d_{\alpha'}(\phi,\psi)=
\bigl\|\bigl(u(\phi,\psi),\,\lambda\,v(\phi,\psi)^{\alpha'}\bigr)\bigr\|_{\ell^p}.$

It suffices to show that the identity map $\mathrm{id}:(\mathcal X,d_\alpha)\to (\mathcal X,d_{\alpha'})$
is continuous.
Fix $\varepsilon>0$ and define \[\omega_{\alpha,\alpha'}(t):=
\Bigl\|\bigl(t,\lambda^{\,1-\alpha'/\alpha}t^{\alpha'/\alpha}\bigr)\Bigr\|_{\ell^p},\] for every $ t\ge 0.$
Since $t\mapsto t^{\alpha'/\alpha}$ is continuous on $[0,\infty)$ and vanishes at $0$,
we have $\omega_{\alpha,\alpha'}(t)\to 0$
as $t\downarrow 0$.
Choose $\delta>0$ such that $\omega_{\alpha,\alpha'}(\delta)<\varepsilon,$ and let $\phi,\psi\in\mathcal X$ satisfy $d_\alpha(\phi,\psi)<\delta$.
Since each coordinate of a vector is bounded above by its $\ell^p$-norm, we obtain $u(\phi,\psi)<\delta,$ and $\lambda\,v(\phi,\psi)^\alpha<\delta.$
Hence $v(\phi,\psi)<(\delta/\lambda)^{1/\alpha},$
and therefore
\[
\lambda\,v(\phi,\psi)^{\alpha'}
<
\lambda\Bigl(\frac{\delta}{\lambda}\Bigr)^{\alpha'/\alpha}
=
\lambda^{\,1-\alpha'/\alpha}\delta^{\alpha'/\alpha}.
\]
Using that the $\ell^p$-norm is monotone in each coordinate on $[0,\infty)^2$, we get
\[
d_{\alpha'}(\phi,\psi)
=
\Bigl\|\bigl(u(\phi,\psi),\,\lambda\,v(\phi,\psi)^{\alpha'}\bigr)\Bigr\|_{\ell^p}
\le
\Bigl\|\bigl(\delta,\lambda^{\,1-\alpha'/\alpha}\delta^{\alpha'/\alpha}\bigr)\Bigr\|_{\ell^p}
=
\omega_{\alpha,\alpha'}(\delta)
<
\varepsilon.
\]
Thus $B_{d_\alpha}(\phi,\delta)\subseteq B_{d_{\alpha'}}(\phi,\varepsilon)$ for all $\phi\in\mathcal X$,
so $\mathrm{id}:(\mathcal X,d_\alpha)\to(\mathcal X,d_{\alpha'})$ is continuous. By symmetry, the same argument with $\alpha$ and $\alpha'$ interchanged completes the proof.
\end{proof}
We next show that distinct anchors give rise to inequivalent metric topologies.
\begin{proposition}\label{prop:anchors-nonequiv}
Let $\theta_1,\theta_2\in \mathcal C$ with $\theta_1\neq \theta_2$. Fix $\lambda>0$, $p\in[1,\infty]$, and $\alpha\in(0,1]$.
Then the metrics $\beta^{BK}_{\theta_1,\lambda,p,\alpha}$ and $\beta^{BK}_{\theta_2,\lambda,p,\alpha}$ induce different topologies on $\mathcal X$.
\end{proposition}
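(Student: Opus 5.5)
The plan is to exhibit a sequence of non-CP maps that converges in one BK topology but not in the other. The model should be the sequence from Proposition~\ref{prop:no-upper-KSW}: non-CP maps accumulate on the cone only at the anchor. Concretely, fix a state $\omega$ and set $\eta(a):=\omega(a)I_H$. Consider non-CP maps $\psi_n$ with $\rho(\psi_n)\to 0$; the simplest choice is $\psi_n:=\frac{i}{n}\eta$. These are not CP, since $\psi_n(1)$ is not selfadjoint, and $\rho(\psi_n)\le\|\psi_n\|_{cb}^{1/2}=n^{-1/2}\to 0$ by Lemma~\ref{lem:deltareg-basic}.

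In $\beta^{BK}_{\theta,\lambda,p,\alpha}$, the distance from $\psi_n$ to any CP map $\phi$ is
\[
\bigl\|\bigl(\beta(\phi,\theta),\lambda\rho(\psi_n)^\alpha\bigr)\bigr\|_{\ell^p}.
\]
This uses $\kappa_\theta(\psi_n)=0$ and Lemma~\ref{lem:kuratowski-isometry-anchor}. Hence $\psi_n\to\theta_1$ in $\beta^{BK}_{\theta_1,\lambda,p,\alpha}$, since the distance there is $\lambda\rho(\psi_n)^\alpha\to0$. In $\beta^{BK}_{\theta_2,\lambda,p,\alpha}$, however, the distance from $\psi_n$ to $\theta_1$ is at least $\beta(\theta_1,\theta_2)>0$. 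So the same sequence does not converge to $\theta_1$ in the second topology.

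Because both topologies are metrizable, convergence of sequences determines them. Since $\psi_n\to\theta_1$ in the first metric but not in the second, the identity map from the first space to the second is not continuous at $\theta_1$, and the topologies differ. Concretely, the ball $B_{\theta_2}(\theta_1,\beta(\theta_1,\theta_2)/2)$ is open in the second topology. It contains no $\psi_n$, yet every $\theta_1$-ball around $\theta_1$ contains $\psi_n$ for large $n$, so this ball is not open in the first topology.

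I do not expect a real obstacle. The only point to check is that $\psi_n\notin\mathcal C$ and $\rho(\psi_n)\to0$, which Lemma~\ref{lem:deltareg-basic} gives immediately; the lemma applies for any choice of $\eta$. One should also note that $\theta_1\neq\theta_2$ gives $\beta(\theta_1,\theta_2)>0$ because $\beta$ is a metric on $\mathcal C$.
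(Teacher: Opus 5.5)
Your proposal is correct and follows the paper's proof: the same sequence $\psi_n=\frac{i}{n}\,\omega(\cdot)I_H$ converges to $\theta_1$ in the $\theta_1$-metric because $\rho(\psi_n)\le\|\psi_n\|_{cb}^{1/2}\to 0$, and it stays at distance at least $\beta(\theta_1,\theta_2)>0$ from $\theta_1$ in the $\theta_2$-metric. Your open-ball argument only spells out the final step (that differing sequential convergence means differing topologies), which the paper leaves implicit.
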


\begin{proof}
Choose any state $\omega$ on $A$ and define $\psi_n:A\to B(H)$ by
$\psi_n(a):=\frac{i}{n}\,\omega(a)\,I_H$ for all $n\in\N$.
Then $\|\psi_n\|_{cb}=1/n\to 0$, and $\psi_n\notin \mathcal C$ for all $n$ since $\psi_n(1)=\frac{i}{n}I_H$ is not positive.

For the BK-metric defined at $\theta_1$:
$\kappa_{\theta_1}(\theta_1)=0$ and $\kappa_{\theta_1}(\psi_n)=0$, so the Kuratowski term is $0$ and
\[
\widetilde\delta_{\mathrm{reg}}(\theta_1,\psi_n)=\rho(\psi_n)\le \|\psi_n\|_{cb}^{1/2}\to 0
\]
by Lemma~\ref{lem:deltareg-basic}. Hence $\beta^{BK}_{\theta_1,\lambda,p,\alpha}(\theta_1,\psi_n)\to 0$.

For the BK-metric defined at $\theta_2$:
$\kappa_{\theta_2}(\psi_n)=0$ and
\[
\|\kappa_{\theta_2}(\theta_1)-\kappa_{\theta_2}(\psi_n)\|_\infty=\|\kappa_{\theta_2}(\theta_1)\|_\infty=\beta(\theta_1,\theta_2)>0,
\]
so $\beta^{BK}_{\theta_2,\lambda,p,\alpha}(\theta_1,\psi_n)\ge \beta(\theta_1,\theta_2)$ for all $n$.
Thus $\psi_n$ converges to $\theta_1$ in one topology but not the other.
\end{proof}

\begin{proposition}
\label{prop:no-anchor-cb-topology}
Fix $\theta\in \mathcal C$.
Then the metric topology on $\mathcal X$ induced by $\beta^{BK}_{\theta,\lambda,p,\alpha}$ does not coincide with
the cb-norm topology.
\end{proposition}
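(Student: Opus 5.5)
The plan is to exhibit a sequence in $\mathcal X$ that converges in the cb-norm topology but not in the BK topology, reusing the construction from the proof of Proposition~\ref{prop:no-upper-KSW}. Since the proposition concerns a fixed anchor $\theta$ (with arbitrary $\lambda,p,\alpha$), I would first pick a CP map $\phi\in\mathcal C$ with $\phi\neq\theta$. Take $\phi:=0$ if $\theta\neq 0$, and $\phi:=\eta$ with $\eta(a)=\omega(a)I_H$ for a state $\omega$ if $\theta=0$.

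Next, set $\psi_n:=\phi+\frac{i}{n}\eta$. As shown in the proof of Proposition~\ref{prop:no-upper-KSW}, $\psi_n\notin\mathcal C$, because $\psi_n(1)$ is not selfadjoint. Moreover, $\|\psi_n-\phi\|_{cb}=1/n\to0$, so $\psi_n\to\phi$ in the cb-norm topology.

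On the BK side, $\kappa_\theta(\psi_n)=0$. By Lemma~\ref{lem:kuratowski-isometry-anchor},
\[
\beta^{BK}_{\theta,\lambda,p,\alpha}(\phi,\psi_n)\ \ge\ \|\kappa_\theta(\phi)\|_\infty=\beta(\phi,\theta)>0
\]
for every $n$, using that each coordinate is dominated by the $\ell^p$-norm. Hence $\psi_n\not\to\phi$ in the BK metric.

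To conclude, recall that in a metric space convergence of sequences determines the topology. A sequence converging in one topology but not in the other shows that the two topologies differ. More precisely, the BK ball of radius $\beta(\phi,\theta)/2$ around $\phi$ is BK-open, but it is not cb-open, since it contains $\phi$ and no $\psi_n$.

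There is no real obstacle here. The only care needed is the choice $\phi\neq\theta$, which guarantees $\beta(\phi,\theta)>0$, and the case split on whether $\theta=0$; both are handled exactly as in Proposition~\ref{prop:no-upper-KSW}. That proposition in fact already implies the claim, since identical topologies would allow no such sequence.
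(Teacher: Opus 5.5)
Your proposal is correct and follows the paper's proof essentially verbatim. You perturb a CP map $\phi\neq\theta$ by $\frac{i}{n}\omega(\cdot)I_H$ to obtain non-CP maps $\psi_n\to\phi$ in cb-norm, and then use $\kappa_\theta(\psi_n)=0$ to bound the BK distance below by $\beta(\phi,\theta)>0$; the paper simply takes an arbitrary $\phi\in\mathcal C\setminus\{\theta\}$, so your case split on whether $\theta=0$ is unnecessary.
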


\begin{proof}
Choose any $\phi\in \mathcal C$ with $\phi\neq \theta$; notice that $\beta(\phi,\theta)>0$. Let $\omega$ be any state on $A$, and define a sequence $\psi_n\in \mathcal X$ as in the proof of Proposition~\ref{prop:no-upper-KSW}, i.e. 
\[
\psi_n(a):=\phi(a)+\frac{i}{n}\,\omega(a)\,I_H\qquad(a\in A).
\]
Then $\|\psi_n-\phi\|_{\cb}=1/n\to 0$, so $\psi_n\to\phi$ in cb-norm.
However, each $\psi_n\notin \mathcal C$ because $\psi_n(1)=\phi(1)+\frac{i}{n}I_H$ is not selfadjoint.

Since $\kappa_\theta(\psi_n)=0$ for all $n$, one has $\|\kappa_\theta(\phi)-\kappa_\theta(\psi_n)\|_\infty=\|\kappa_\theta(\phi)\|_\infty=\beta(\phi,\theta)>0.$
Therefore $\beta^{BK}_{\theta,\lambda,p,\alpha}(\phi,\psi_n)\ge \beta(\phi,\theta)$ for all $n$, so $\psi_n$ does not converge to $\phi$ in the BK topology.
\end{proof}

\begin{definition}
\label{def:D-infty-metrics}
Let $\mathrm{Met}(\mathcal X)$ denote the collection of all metrics on $\mathcal X$.
For $d_1,d_2\in \mathrm{Met}(\mathcal X)$ define the extended pseudo-distance
\[
D_\infty(d_1,d_2):=\sup_{\phi,\psi\in \mathcal X}\bigl|d_1(\phi,\psi)-d_2(\phi,\psi)\bigr|\in[0,\infty].
\]
\end{definition}

The BK construction embeds the CP cone isometrically into the space of metrics.

\begin{proposition}\label{prop:isometric-anchor-map} Fix $\lambda>0$, $p\in[1,\infty]$, and $\alpha\in(0,1]$. Then the map $\Theta:(\mathcal C,\beta)\to(\mathrm{Met}(\mathcal X),D_\infty)$, $\Theta(\theta):=\beta^{BK}_{\theta,\lambda,p,\alpha}$, is an isometric embedding.
\end{proposition}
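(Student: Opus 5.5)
The plan is to compute $D_\infty\bigl(\beta^{BK}_{\theta_1,\lambda,p,\alpha},\beta^{BK}_{\theta_2,\lambda,p,\alpha}\bigr)$ directly for $\theta_1,\theta_2\in\mathcal C$ and show it equals $\beta(\theta_1,\theta_2)$. By Theorem~\ref{prop:BK-metric}, $\Theta$ takes values in $\mathrm{Met}(\mathcal X)$. Once the isometry identity is established, injectivity is automatic because $\beta$ is a metric, so $\Theta$ is an isometric embedding.

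First I will observe that the regular coordinate $\lambda\widetilde\delta_{\mathrm{reg}}(\phi,\psi)^\alpha$ does not depend on the anchor. I then split into cases according to how many of $\phi,\psi$ lie in $\mathcal C$, writing $d_i:=\beta^{BK}_{\theta_i,\lambda,p,\alpha}$.

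\emph{Both in $\mathcal C$.} We have $\kappa_{\theta_i}(\phi)-\kappa_{\theta_i}(\psi)=\beta(\phi,\cdot)-\beta(\psi,\cdot)$, since the $\theta_i$-term cancels. Hence $d_1(\phi,\psi)=d_2(\phi,\psi)$.

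\emph{Both outside $\mathcal C$.} Both Kuratowski terms vanish, so again $d_1(\phi,\psi)=d_2(\phi,\psi)$.

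\emph{Exactly one in $\mathcal C$, say $\phi\in\mathcal C$ and $\psi\notin\mathcal C$.} Lemma~\ref{lem:kuratowski-isometry-anchor} gives the Kuratowski term $a_i:=\|\kappa_{\theta_i}(\phi)\|_\infty=\beta(\phi,\theta_i)$. The regular term is $b:=\lambda\rho(\psi)^\alpha$ for both anchors. The reverse triangle inequality for the $\ell^p$-norm on $\R^2$ then gives
\[
|d_1(\phi,\psi)-d_2(\phi,\psi)|=\bigl|\|(a_1,b)\|_{\ell^p}-\|(a_2,b)\|_{\ell^p}\bigr|\le |a_1-a_2|=|\beta(\phi,\theta_1)-\beta(\phi,\theta_2)|\le\beta(\theta_1,\theta_2).
\]
Combining the three cases yields $D_\infty(\Theta(\theta_1),\Theta(\theta_2))\le\beta(\theta_1,\theta_2)$.

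For the reverse inequality, the plan is to use the sequence from Proposition~\ref{prop:anchors-nonequiv}. Take $\phi=\theta_1$ and $\psi_n(a)=\frac in\omega(a)I_H$ for a state $\omega$ on $A$, so that $\psi_n\notin\mathcal C$. Lemma~\ref{lem:deltareg-basic} gives $b_n=\lambda\rho(\psi_n)^\alpha\le\lambda n^{-\alpha/2}\to0$, while $a_1=0$ and $a_2=\beta(\theta_1,\theta_2)$. Therefore
\[
|d_1(\theta_1,\psi_n)-d_2(\theta_1,\psi_n)|=\bigl|\|(\beta(\theta_1,\theta_2),b_n)\|_{\ell^p}-b_n\bigr|\longrightarrow\beta(\theta_1,\theta_2).
\]
This shows the supremum is at least $\beta(\theta_1,\theta_2)$. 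It need not be attained at any single pair; it is approached along the sequence $\psi_n$.

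I expect the main obstacle to be the lower bound. The anchor dependence is visible only on mixed pairs, and there the regular term $b$ dilutes the difference for every $p<\infty$. The key point is therefore that non-CP maps of arbitrarily small $\rho$ exist, so that $b_n$ can be driven to $0$.
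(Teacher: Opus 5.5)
Your proof is correct and follows essentially the same route as the paper. For the upper bound you use the same three-case split, with the $1$-Lipschitz (reverse triangle inequality) estimate in the mixed case, and for the lower bound you use the same sequence $\psi_n=\frac{i}{n}\omega(\cdot)I_H$ tested against $\phi=\theta_1$. The only cosmetic difference is in the CP--CP case: you note directly that the anchor terms cancel in $\kappa_\theta(\phi)-\kappa_\theta(\psi)$, whereas the paper invokes CP-reduction.
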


\begin{proof} For simplicity, denote $\beta^{BK}_\theta:=\beta^{BK}_{\theta,\lambda,p,\alpha},$ for all $\theta\in\mathcal C$. Let $\theta_1,\theta_2\in \mathcal C$ be fixed. We show that
$D_\infty\bigl(\beta^{BK}_{\theta_1},\beta^{BK}_{\theta_2}\bigr)=\beta(\theta_1,\theta_2)$.

(``$\le$'').
It is enough to show that
\begin{equation}\label{eq:lip1}
\bigl|\beta^{BK}_{\theta_1}(\phi,\psi)-\beta^{BK}_{\theta_2}(\phi,\psi)\bigr|
\le \beta(\theta_1,\theta_2),
\qquad (\phi,\psi\in \mathcal X).
\end{equation}
Fix  $\phi$ and $\psi$ in $\mathcal X$.
If $\phi,\psi\in \mathcal C$, then $\widetilde\delta_{\rm reg}(\phi,\psi)=0$ and CP-reduction gives
$\beta^{BK}_{\theta}(\phi,\psi)=\beta(\phi,\psi)$ for every $\theta$, so the difference is $0$.

If $\phi,\psi\notin \mathcal C$, then $\kappa_{\theta}(\phi)=\kappa_{\theta}(\psi)=0$ for every $\theta$, hence
$\beta^{BK}_{\theta}(\phi,\psi)=\lambda(\delta_{\rm reg}(\phi,\psi))^\alpha$ is independent of $\theta$ and the difference is again $0$.

It remains to treat the cross case. By symmetry assume $\phi\in \mathcal C$ and $\psi\notin \mathcal C$.
Then $\kappa_{\theta}(\psi)=0$ and $\|\kappa_\theta(\phi)\|_\infty=\beta(\phi,\theta)$, for all $\theta\in \mathcal C$, by Lemma~\ref{lem:kuratowski-isometry-anchor}.
Also $\widetilde\delta_{\rm reg}(\phi,\psi)=\rho(\psi)$ is independent of $\theta$.
Set $c:=\lambda(\rho(\psi))^\alpha\ge 0$. Then
\[
\beta^{BK}_{\theta}(\phi,\psi)=\|( \beta(\phi,\theta),\,c)\|_{\ell^p}=g_c\bigl(\beta(\phi,\theta)\bigr).
\]
Since the function $[0,\infty)\ni t\mapsto \|(t,c)\|_{\ell^p(\mathbb R^2)} \in[0,\infty)$
is $1$-Lipschitz for any $c\ge 0$, we have
\[
\bigl|\beta^{BK}_{\theta_1}(\phi,\psi)-\beta^{BK}_{\theta_2}(\phi,\psi)\bigr|
\le \bigl|\beta(\phi,\theta_1)-\beta(\phi,\theta_2)\bigr|
\le \beta(\theta_1,\theta_2),
\]
which is \eqref{eq:lip1}.

\medskip
(``$\ge $'')
Let $\varepsilon>0$.
Set $\phi:=\theta_1\in \mathcal C$ and choose $\psi_n\in \mathcal X\setminus \mathcal C$ as in the proof of Proposition~\ref{prop:anchors-nonequiv} so that $\|\psi_n\|_{\cb}\to 0$ as $n\to\infty$.
Then $t_n:=\lambda(\rho(\psi_n))^\alpha\to 0$ by Lemma~\ref{lem:deltareg-basic}.
Compute
\[
\beta^{BK}_{\theta_2}(\phi,\psi_n)-\beta^{BK}_{\theta_1}(\phi,\psi_n)
=\bigl\|(\beta(\theta_1,\theta_2),t_n)\bigr\|_{\ell^p}-t_n
\ \xrightarrow[n\to\infty]{}\ \beta(\theta_1,\theta_2).
\]
Hence for $n$ large enough,
$\beta^{BK}_{\theta_2}(\phi,\psi_n)-\beta^{BK}_{\theta_1}(\phi,\psi_n)\ge \beta(\theta_1,\theta_2)-\varepsilon$,
so $D_\infty(\beta^{BK}_{\theta_1},\beta^{BK}_{\theta_2})\ge \beta(\theta_1,\theta_2)-\varepsilon$.
Let $\varepsilon\downarrow 0$.
\end{proof}
Different anchors produce uniformly close metric structures on compact subsets.
\begin{corollary}
\label{cor:rough-iso-GH}
Let $\theta_1,\theta_2\in \mathcal C$ and set $\varepsilon:=\beta(\theta_1,\theta_2)$.
Then:
\begin{enumerate}[label=\textup{(\roman*)}]
\item For every subset $E\subseteq \mathcal X$, the identity map $\id_E:(E,\beta^{BK}_{\theta_1})\to(E,\beta^{BK}_{\theta_2})$
is a $(1,\varepsilon)$-quasi-isometry, i.e.\ for all $x,y\in E$,
\[
\beta^{BK}_{\theta_1}(x,y)-\varepsilon\ \le\ \beta^{BK}_{\theta_2}(x,y)\ \le\ \beta^{BK}_{\theta_1}(x,y)+\varepsilon.
\]
\item If $E\subseteq \mathcal X$ is compact with respect to both metrics $\beta^{BK}_{\theta_1}$ and $\beta^{BK}_{\theta_2}$,
then the Gromov--Hausdorff distance satisfies
\[
d_{GH}\bigl((E,\beta^{BK}_{\theta_1}),\, (E,\beta^{BK}_{\theta_2})\bigr)\ \le\ \frac{\varepsilon}{2}.
\]
\end{enumerate}
\end{corollary}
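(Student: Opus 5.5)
Part (i) follows directly from Proposition~\ref{prop:isometric-anchor-map}. Its proof establishes inequality \eqref{eq:lip1}:
\[
\bigl|\beta^{BK}_{\theta_1}(x,y)-\beta^{BK}_{\theta_2}(x,y)\bigr|\le\beta(\theta_1,\theta_2)=\varepsilon
\]
for all $x,y\in\mathcal X$. This bound holds uniformly on all of $\mathcal X$, so it restricts to any subset $E$. Unpacking the absolute value gives the two-sided estimate in (i). Surjectivity of $\id_E$ is automatic, so the identity is a $(1,\varepsilon)$-quasi-isometry with no further work. Equivalently, (i) is just $D_\infty(\beta^{BK}_{\theta_1},\beta^{BK}_{\theta_2})=\varepsilon$ read pointwise.

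For part (ii), I will use the standard correspondence (distortion) characterization of the Gromov--Hausdorff distance between compact metric spaces:
\[
d_{GH}(X,Y)=\tfrac12\inf_{\mathcal R}\operatorname{dis}(\mathcal R),
\]
where the infimum runs over correspondences $\mathcal R\subseteq X\times Y$, and
\[
\operatorname{dis}(\mathcal R)=\sup\{|d_X(x,x')-d_Y(y,y')|:(x,y),(x',y')\in\mathcal R\}.
\]
Compactness of $E$ under both metrics is exactly what makes $(E,\beta^{BK}_{\theta_1})$ and $(E,\beta^{BK}_{\theta_2})$ compact metric spaces, so this formula applies. I take the diagonal correspondence $\mathcal R=\{(x,x):x\in E\}$. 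Its distortion is
\[
\sup_{x,x'\in E}\bigl|\beta^{BK}_{\theta_1}(x,x')-\beta^{BK}_{\theta_2}(x,x')\bigr|,
\]
which is at most $\varepsilon$ by part (i). Hence $d_{GH}\le\varepsilon/2$.

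There is no genuine obstacle here. The only point needing care is citing the correct normalization of $d_{GH}$ (the factor $\tfrac12$ in the distortion formula), since that factor is what produces $\varepsilon/2$ rather than $\varepsilon$. If one prefers not to invoke the distortion theorem, an alternative is to build a metric on the disjoint union $E\sqcup E'$ by setting
\[
d(x,y')=\inf_{z\in E}\bigl(\beta^{BK}_{\theta_1}(x,z)+\beta^{BK}_{\theta_2}(z,y)\bigr)+\varepsilon/2.
\]
One checks the triangle inequality using (i), and then observes that each copy of $E$ lies within Hausdorff distance $\varepsilon/2$ of the other. I would present the correspondence argument as the main proof because it is shorter.
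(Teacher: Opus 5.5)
Your proposal is correct and follows the paper's proof. Part (i) is the uniform bound \eqref{eq:lip1} from the proof of Proposition~\ref{prop:isometric-anchor-map} restricted to $E\times E$, and part (ii) takes the diagonal correspondence, whose distortion is at most $\varepsilon$ by (i), and applies $d_{GH}\le\frac12\operatorname{dis}$.
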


\begin{proof}
Write $d_1:=\beta^{BK}_{\theta_1},$ $d_2:=\beta^{BK}_{\theta_2},$ for simplicity.

(i) is \eqref{eq:lip1} restricted to $E\times E$.

(ii)  Assume that $E$ is compact with respect to both $d_1$ and $d_2$. Consider the diagonal correspondence $\Delta_E:=\{(x,x):x\in E\}\subseteq E\times E.$
Its projections onto the two factors are both equal to $E$, so $\Delta_E$ is indeed a correspondence between $(E,d_1)$ and $(E,d_2)$.

Recall that for compact metric spaces $X$ and $Y$, $d_{GH}(X,Y)=\frac12\inf_R \operatorname{dis}(R),$
where the infimum is taken over all correspondences $R\subseteq X\times Y$ and
\[\operatorname{dis}(R):=
\sup\Bigl\{\bigl|d_X(x,x')-d_Y(y,y')\bigr|:(x,y),(x',y')\in R\Bigr\};
\]
see \cite[Theorem~7.3.25]{BBI}. In particular,
\[
d_{GH}(X,Y)\le \frac12\,\operatorname{dis}(R)
\]
for every correspondence $R$. Applying this to $X=(E,d_1)$, $Y=(E,d_2)$, and $R=\Delta_E$, we obtain
\[
d_{GH}\bigl((E,d_1),(E,d_2)\bigr)
\le
\frac12\,\operatorname{dis}(\Delta_E).
\]
Now, if $(x,x),(y,y)\in \Delta_E$, then $\bigl|d_1(x,y)-d_2(x,y)\bigr|\le \varepsilon$
by part (i). Hence $\operatorname{dis}(\Delta_E)
=
\sup_{x,y\in E}\bigl|d_1(x,y)-d_2(x,y)\bigr|
\le \varepsilon.$
Therefore
\[
d_{GH}\bigl((E,\beta^{BK}_{\theta_1}),\,(E,\beta^{BK}_{\theta_2})\bigr)
=
d_{GH}\bigl((E,d_1),(E,d_2)\bigr)
\le
\frac{\varepsilon}{2}.
\]
This proves (ii).
\end{proof}

\section{An \texorpdfstring{$\ell^p$}{lp} wedge decomposition for BK metrics}

Fix $\lambda>0$, $\alpha\in(0,1]$, and $p\in[1,\infty]$. We begin by isolating the non-CP component and equipping it with a natural metric structure.

\begin{definition}[Collapsed non-CP space]\label{def:bk-Y-space}
Let $\mathcal Y:=(\mathcal X\setminus\mathcal C)\ \sqcup\ \{\ast\}$
be the disjoint union of the non-CP region with a distinguished basepoint $\ast$.
Define a metric $d_{\mathrm{reg}}$ on $\mathcal Y$ by
\[
d_{\mathrm{reg}}(x,y):=
\begin{cases}
\lambda\bigl(\delta_{\mathrm{reg}}(x,y)\bigr)^\alpha, & x,y\in \mathcal X\setminus\mathcal C,\\[1mm]
\lambda\bigl(\rho(x)\bigr)^\alpha, & x\in \mathcal X\setminus\mathcal C,\ y=\ast,\\[1mm]
\lambda\bigl(\rho(y)\bigr)^\alpha, & y\in \mathcal X\setminus\mathcal C,\ x=\ast,\\[1mm]
0,& x=y=\ast
\end{cases}
\]
where $\rho(\psi):=\delta_{\mathrm{reg}}(\psi,0) \in (0,\infty)$ is as in Lemma~\ref{lem:deltareg-basic}.
\end{definition}
The following lemma shows that the collapsed non-CP component is indeed a metric space.
\begin{lemma}\label{lem:bk-dreg-metric}
The function $d_{\mathrm{reg}}$ is a metric on $\mathcal Y$.
\end{lemma}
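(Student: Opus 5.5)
The plan is to realize $d_{\mathrm{reg}}$ as the snowflake of a restriction of the metric $\delta_{\mathrm{reg}}$. Define $\iota:\mathcal Y\to\mathcal X$ by $\iota(x)=x$ for $x\in\mathcal X\setminus\mathcal C$ and $\iota(\ast)=0$. Since $0\in\mathcal C$, the image of $\mathcal X\setminus\mathcal C$ does not contain $0$, so $\iota$ is injective. Checking the four cases of Definition~\ref{def:bk-Y-space} against $\rho(\psi)=\delta_{\mathrm{reg}}(\psi,0)$ gives
\[
d_{\mathrm{reg}}(x,y)=\lambda\,\bigl(\delta_{\mathrm{reg}}(\iota(x),\iota(y))\bigr)^\alpha\qquad(x,y\in\mathcal Y),
\]
including the case $x=y=\ast$, where both sides equal $0$ because $\delta_{\mathrm{reg}}(0,0)=0$.

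By Lemma~\ref{lem:deltareg-basic}, $\delta_{\mathrm{reg}}$ is a metric on $\mathcal X$. Its pullback $\iota^*\delta_{\mathrm{reg}}$ along the injective map $\iota$ is therefore a metric on $\mathcal Y$: symmetry and the triangle inequality are inherited, and separation follows from injectivity.

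It remains to note that $d\mapsto\lambda d^\alpha$ sends metrics to metrics for $\lambda>0$ and $\alpha\in(0,1]$.
\begin{itemize}
\item Symmetry is preserved.
\item $\lambda d^\alpha=0$ if and only if $d=0$, so separation is preserved.
\item For the triangle inequality, use the subadditivity $(s+t)^\alpha\le s^\alpha+t^\alpha$ for $s,t\ge0$ together with monotonicity of $t\mapsto t^\alpha$:
\[
d(x,z)^\alpha\le\bigl(d(x,y)+d(y,z)\bigr)^\alpha\le d(x,y)^\alpha+d(y,z)^\alpha .
\]
\end{itemize}
Finiteness of all values is automatic: for $\psi\in\mathcal X\setminus\mathcal C$ we have $\psi\neq0$, so $\rho(\psi)\in(0,\infty)$ by the bound $\rho(\psi)\le\|\psi\|_{cb}^{1/2}$ of Lemma~\ref{lem:deltareg-basic} together with separation of $\delta_{\mathrm{reg}}$.

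The only point needing care is the identification of the basepoint $\ast$ with the zero map. This works precisely because $0\in\mathcal C$, so $0$ is not already a point of $\mathcal X\setminus\mathcal C$. Once that identification is made, no genuinely new estimate is needed, and the whole argument reduces to Lemma~\ref{lem:deltareg-basic} plus subadditivity of $t\mapsto t^\alpha$.
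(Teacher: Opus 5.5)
Your proof is correct and is essentially the paper's argument: both rest on $\rho=\delta_{\mathrm{reg}}(\cdot,0)$, the metric property of $\delta_{\mathrm{reg}}$ from Lemma~\ref{lem:deltareg-basic}, and subadditivity of $t\mapsto t^\alpha$. Identifying $\ast$ with the zero map via the injection $\iota$ replaces the paper's case analysis with one uniform argument. It also covers the triangle configurations with $\ast$ as an endpoint, such as $d_{\mathrm{reg}}(x,\ast)\le d_{\mathrm{reg}}(x,y)+d_{\mathrm{reg}}(y,\ast)$, which the paper leaves implicit; the paper only writes out the case with $\ast$ as the middle point.
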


\begin{proof}
Symmetry and nonnegativity are clear.
If $x,y,z\in\mathcal X\setminus\mathcal C$, then $\delta_{\mathrm{reg}}$ is a metric and $t\mapsto t^\alpha$ is subadditive, hence
\[
d_{\mathrm{reg}}(x,z)=\lambda\delta_{\mathrm{reg}}(x,z)^\alpha
\le \lambda(\delta_{\mathrm{reg}}(x,y)+\delta_{\mathrm{reg}}(y,z))^\alpha
\le d_{\mathrm{reg}}(x,y)+d_{\mathrm{reg}}(y,z).
\]
For a triangle involving the basepoint, note that $\delta_{\mathrm{reg}}(x,y)\le \rho(x)+\rho(y)$, hence
\[
d_{\mathrm{reg}}(x,y)=\lambda\delta_{\mathrm{reg}}(x,y)^\alpha
\le \lambda(\rho(x)+\rho(y))^\alpha
\le \lambda\rho(x)^\alpha+\lambda\rho(y)^\alpha
=d_{\mathrm{reg}}(x,\ast)+d_{\mathrm{reg}}(\ast,y).
\]
Separation follows because $\delta_{\mathrm{reg}}$ is a metric and $\rho(x)=0$ iff $x=0\in\mathcal C$.
\end{proof}

We henceforth regard $(\mathcal Y,d_{\mathrm{reg}})$ as a pointed metric space with basepoint $\ast$.  We now introduce the $\ell^p$ wedge construction for pointed metric spaces, a natural $\ell^p$-analogue of the usual gluing metric on a wedge sum; compare \cite[Definition~5.23 and Lemma~5.24]{BH} for the standard gluing metric and \cite[\S5.1 and Proposition~5.5]{CDT} for the corresponding $\ell^p$ construction.

\begin{definition}[\texorpdfstring{$\ell^p$}{lp} metric wedge]\label{def:lp-wedge}
Let $(M_1,d_1,m_0)$ and $(M_2,d_2,n_0)$ be pointed metric spaces, and fix $p\in[1,\infty]$.
The $\ell^p$ metric wedge $M_1\vee_p M_2$ is the quotient set of $M_1\sqcup M_2$ obtained by identifying
$m_0\sim n_0$, equipped with the function $d_{\vee,p}$ defined by
\[
d_{\vee,p}(x,y):=
\begin{cases}
d_1(x,y), & x,y\in M_1,\\[1mm]
d_2(x,y), & x,y\in M_2,\\[1mm]
\norm{\bigl(d_1(x,m_0),\,d_2(y,n_0)\bigr)}_{\ell^p}, & x\in M_1,\ y\in M_2,\\[1mm]
\norm{\bigl(d_1(y,m_0),\,d_2(x,n_0)\bigr)}_{\ell^p}, & x\in M_2,\ y\in M_1.
\end{cases}
\]
\end{definition}
The $\ell^p$ wedge construction defines a metric on the glued space.
\begin{lemma}\label{lem:lp-wedge-metric}
For every $p\in[1,\infty]$, the function $d_{\vee,p}$ is a metric on $M_1\vee_p M_2$.
\end{lemma}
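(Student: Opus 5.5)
The plan is to verify the metric axioms directly, with the only nontrivial work in the triangle inequality. First, well-definedness: the glued point $m_0\sim n_0$ lies in both $M_1$ and $M_2$. The formulas must agree there. For $x\in M_1$ and $y=n_0$, the mixed formula gives $\|(d_1(x,m_0),0)\|_{\ell^p}=d_1(x,m_0)$, which is the $M_1$ formula. The symmetric cases are handled the same way. Symmetry of $d_{\vee,p}$ is immediate from symmetry of $d_1,d_2$ and the two mixed cases.

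Next, separation. If $d_{\vee,p}(x,y)=0$ with both points in the same component, we use that $d_i$ is a metric. If $x\in M_1$ and $y\in M_2$, both coordinates of the $\ell^p$ vector vanish, so $x=m_0$ and $y=n_0$, which are the same point of the wedge.

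The triangle inequality $d(x,z)\le d(x,y)+d(y,z)$ is handled by cases on which components $x,y,z$ lie in. Set $r_1(x)=d_1(x,m_0)$ and $r_2(x)=d_2(x,n_0)$. Two standard facts are used throughout. First, $\max(a,b)\le\|(a,b)\|_{\ell^p}\le a+b$ for $a,b\ge0$. Second, the $\ell^p$ norm is monotone and satisfies Minkowski's inequality on $[0,\infty)^2$.

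\begin{itemize}
\item \emph{All three in the same $M_i$.} This is the triangle inequality for $d_i$.
\item \emph{$x,z\in M_1$, $y\in M_2$.} Here $d_1(x,z)\le r_1(x)+r_1(z)\le d(x,y)+d(y,z)$, since each mixed distance dominates its first coordinate.
\item \emph{$x,y\in M_1$, $z\in M_2$.} We need $\|(r_1(x),r_2(z))\|\le d_1(x,y)+\|(r_1(y),r_2(z))\|$. Minkowski gives $\|(r_1(x),r_2(z))\|\le\|(r_1(y),r_2(z))\|+\|(|r_1(x)-r_1(y)|,0)\|$. The last term is at most $d_1(x,y)$ by the triangle inequality in $M_1$.
\item \emph{Remaining configurations.} These follow by symmetry (swapping $x,z$ or the roles of $M_1,M_2$).
\end{itemize}

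The main obstacle is the mixed case with two points in one component. The key is to apply Minkowski's inequality to the vector difference, rather than trying to compare coordinates separately. The points equal to the basepoint can be assigned to either component, so no extra cases arise.
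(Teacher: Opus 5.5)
Your proposal is correct and uses essentially the same case analysis as the paper: the same-component case, the case $x,z\in M_1$, $y\in M_2$ via domination of the first coordinates, and the case $x,y\in M_1$, $z\in M_2$ via Minkowski on $\R^2$. The only differences are cosmetic. You split off the difference vector $(r_1(x)-r_1(y),0)$ and use the reverse triangle inequality, where the paper uses monotonicity together with the decomposition $(d_1(x,y),0)+(d_1(y,m_0),d_2(z,n_0))$. You also explicitly check well-definedness at the glued point, which the paper leaves implicit.
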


\begin{proof}
Symmetry and nonnegativity are clear. Separation follows because:
(i) if $x,y$ lie in the same component, separation holds since $d_1$ and $d_2$ are metrics;
(ii) if $x\in M_1$ and $y\in M_2$, then
  $d_{\vee,p}(x,y)=\norm{(d_1(x,m_0),d_2(y,n_0))}_{\ell^p}=0$
  forces $d_1(x,m_0)=0$ and $d_2(y,n_0)=0$, hence $x=m_0$ and $y=n_0$, i.e.\ they represent the same glued point.

For the triangle inequality, fix $x,y,z\in M_1\vee_p M_2$ and consider cases.

\smallskip\noindent
Case 1: $x,y,z$ all lie in $M_1$ (or all in $M_2$). Then the triangle inequality holds by that of $d_1$ (or $d_2$).

\smallskip\noindent
Case 2: $x,y\in M_1$ and $z\in M_2$.
Then $
d_{\vee,p}(x,z)=\norm{(d_1(x,m_0),\,d_2(z,n_0))}_{\ell^p}.$
By the triangle inequality in $M_1$, we have
\[
(d_1(x,m_0),\,d_2(z,n_0))
\le (d_1(x,y)+d_1(y,m_0),\,d_2(z,n_0))
=(d_1(x,y),0)+(d_1(y,m_0),d_2(z,n_0))
\]
componentwise, and therefore (using Minkowski on $\R^2$)
\[
d_{\vee,p}(x,z)
\le \norm{(d_1(x,y),0)}_{\ell^p}+\norm{(d_1(y,m_0),d_2(z,n_0))}_{\ell^p}
= d_1(x,y)+d_{\vee,p}(y,z)
= d_{\vee,p}(x,y)+d_{\vee,p}(y,z).
\]

\smallskip\noindent
Case 3: $x\in M_1$ and $y,z\in M_2$. This is symmetric to Case 2.

\smallskip\noindent
Case 4: $x\in M_1$, $y\in M_2$, $z\in M_1$.
Then
\[
d_{\vee,p}(x,z)=d_1(x,z)\le d_1(x,m_0)+d_1(m_0,z)
=\norm{(d_1(x,m_0),0)}_{\ell^p}+\norm{(d_1(z,m_0),0)}_{\ell^p}.
\]
Also, by definition, $d_{\vee,p}(x,y)=\norm{(d_1(x,m_0),d_2(y,n_0))}_{\ell^p}\ge \norm{(d_1(x,m_0),0)}_{\ell^p}=d_1(x,m_0),$
and similarly $d_{\vee,p}(y,z)\ge d_1(z,m_0)$. Therefore
\[
d_{\vee,p}(x,z)\le d_1(x,m_0)+d_1(z,m_0)\le d_{\vee,p}(x,y)+d_{\vee,p}(y,z).
\]

All remaining configurations reduce to these by symmetry. Thus $d_{\vee,p}$ is a metric.
\end{proof}
The following theorem shows that BK metric spaces admit a canonical $\ell^p$ wedge decomposition.

\begin{theorem}
\label{thm:BK-lp-wedge}
Fix $\theta\in\mathcal C$, $\lambda>0$, $\alpha\in(0,1]$, and $p\in[1,\infty]$. Then the BK metric space $(\mathcal X, \beta^{BK}_{\theta,\lambda,p,\alpha})$ is isometric to the $\ell^p$ wedge  $\mathcal C\vee_p\mathcal Y$ of the pointed metric spaces $(\mathcal C,\beta,\theta)$
and $(\mathcal Y,d_{\mathrm{reg}},\ast)$,
via the identification $J:\mathcal X\to \mathcal C\vee_p\mathcal Y$ that is the identity on $\mathcal C\subseteq\mathcal X$ and on $\mathcal X\setminus\mathcal C\subseteq\mathcal Y$. Equivalently, for all $\phi,\psi\in\mathcal X$,

$$
\beta^{BK}_{\theta,\lambda,p,\alpha}(\phi,\psi)=
\begin{cases}
\beta(\phi,\psi), & \phi,\psi\in\mathcal C,\\[1mm]
d_{\mathrm{reg}}(\phi,\psi)=\lambda\bigl(\delta_{\mathrm{reg}}(\phi,\psi)\bigr)^\alpha, & \phi,\psi\notin\mathcal C,\\[1mm]
\norm{\bigl(\beta(\phi,\theta),\ d_{\mathrm{reg}}(\psi,\ast)\bigr)}_{\ell^p}
=\norm{\bigl(\beta(\phi,\theta),\ \lambda\rho(\psi)^\alpha\bigr)}_{\ell^p},
& \phi\in\mathcal C,\ \psi\notin\mathcal C,\\[1mm]
\norm{\bigl(\beta(\psi,\theta),\ d_{\mathrm{reg}}(\phi,\ast)\bigr)}_{\ell^p}
=\norm{\bigl(\beta(\psi,\theta),\ \lambda\rho(\phi)^\alpha\bigr)}_{\ell^p},
& \psi\in\mathcal C,\ \phi\notin\mathcal C.
\end{cases}
$$
\end{theorem}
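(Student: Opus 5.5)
We first check that $J$ is a bijection, then verify the displayed formula case by case directly from Definition~\ref{def:BK-family}; each line of that formula is exactly $d_{\vee,p}(J\phi,J\psi)$ from Definition~\ref{def:lp-wedge}.

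For the bijection: the wedge $\mathcal C\vee_p\mathcal Y$ is $\mathcal C\sqcup\mathcal Y$ with $\theta\sim\ast$. As a set this is $\mathcal C\sqcup(\mathcal X\setminus\mathcal C)=\mathcal X$, with the glued point represented by $\theta\in\mathcal C$. So $J$ is bijective, and it sends $\theta$ to the glued point.

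We then split into the four cases of the display, using the definitions of $\kappa_\theta$ and $\widetilde\delta_{\mathrm{reg}}$.
\begin{enumerate}
\item \emph{Both $\phi,\psi\in\mathcal C$.} This is the CP-reduction in Theorem~\ref{prop:BK-metric}: the second coordinate vanishes and the first equals $\beta(\phi,\psi)$ by Lemma~\ref{lem:kuratowski-isometry-anchor}. This is $d_{\vee,p}$ on the $\mathcal C$-component.
\item \emph{Both $\phi,\psi\notin\mathcal C$.} Here $\kappa_\theta(\phi)=\kappa_\theta(\psi)=0$, so the first coordinate is $0$. The second is $\lambda\,\delta_{\mathrm{reg}}(\phi,\psi)^\alpha=d_{\mathrm{reg}}(\phi,\psi)$, and the $\ell^p$-norm of $(0,c)$ is $c$.
\item \emph{$\phi\in\mathcal C$, $\psi\notin\mathcal C$.} Since $\kappa_\theta(\psi)=0$, the first coordinate is $\|\kappa_\theta(\phi)\|_\infty=\beta(\phi,\theta)$ by Lemma~\ref{lem:kuratowski-isometry-anchor}. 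The second is $\lambda\rho(\psi)^\alpha=d_{\mathrm{reg}}(\psi,\ast)$. This matches the mixed line of $d_{\vee,p}$ with $m_0=\theta$ and $n_0=\ast$.
\item \emph{$\psi\in\mathcal C$, $\phi\notin\mathcal C$.} This follows from the previous case by symmetry.
\end{enumerate}

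The only point needing care is consistency at the glued point. Take $\phi=\theta$ and $\psi\notin\mathcal C$. Regarding $\theta$ as a point of $\mathcal C$ gives $\|(0,\lambda\rho(\psi)^\alpha)\|_{\ell^p}=\lambda\rho(\psi)^\alpha$. Regarding it as $\ast\in\mathcal Y$ gives $d_{\mathrm{reg}}(\psi,\ast)$, which is the same value. So $d_{\vee,p}$ is well defined on the quotient, and $J$ respects it.

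No step is a genuine obstacle: the theorem is essentially a rewriting of definitions. Lemma~\ref{lem:lp-wedge-metric} and Lemma~\ref{lem:bk-dreg-metric} are not needed for the isometry itself. They only confirm that the target is a metric space, and this also follows independently from Theorem~\ref{prop:BK-metric}.
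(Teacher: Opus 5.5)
Your proposal is correct and follows the paper's proof essentially verbatim. Both arguments are the same four-case check of Definition~\ref{def:BK-family} against the wedge formula of Definition~\ref{def:lp-wedge}, using Lemma~\ref{lem:kuratowski-isometry-anchor}, $\kappa_\theta\equiv 0$ off $\mathcal C$, and the collapse rule for $\widetilde\delta_{\mathrm{reg}}$. Your added remarks on the bijectivity of $J$ and on consistency at the glued point $\theta\sim\ast$ are points the paper leaves implicit; the companion check with $\psi\in\mathcal C$, which uses $d_{\mathrm{reg}}(\ast,\ast)=0$, is equally immediate.
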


\begin{proof}
We verify that the distance formulas on $\mathcal X$ match those on the wedge.

\smallskip\noindent
If $\phi,\psi\in\mathcal C$, then by construction $\widetilde\delta_{\mathrm{reg}}(\phi,\psi)=0$, and by Lemma~\ref{lem:kuratowski-isometry-anchor} $\norm{\kappa_\theta(\phi)-\kappa_\theta(\psi)}_\infty=\beta(\phi,\psi).$
Hence
\[
\beta^{BK}_{\theta,\lambda,p,\alpha}(\phi,\psi)
=\norm{(\beta(\phi,\psi),0)}_{\ell^p}
=\beta(\phi,\psi),
\]
which agrees with the wedge metric on the $\mathcal C$ component.

\smallskip\noindent
If $\phi,\psi\notin\mathcal C$, then $\kappa_\theta(\phi)=\kappa_\theta(\psi)=0$ and
$\widetilde\delta_{\mathrm{reg}}(\phi,\psi)=\delta_{\mathrm{reg}}(\phi,\psi)$, so
\[
\beta^{BK}_{\theta,\lambda,p,\alpha}(\phi,\psi)
=\norm{(0,\lambda\delta_{\mathrm{reg}}(\phi,\psi)^\alpha)}_{\ell^p}
=\lambda\delta_{\mathrm{reg}}(\phi,\psi)^\alpha
=d_{\mathrm{reg}}(\phi,\psi),
\]
which matches the wedge metric on the $\mathcal Y$ component.

\smallskip\noindent
In the cross case, assume $\phi\in\mathcal C$ and $\psi\notin\mathcal C$.
Then $\kappa_\theta(\psi)=0$, so $\norm{\kappa_\theta(\phi)-\kappa_\theta(\psi)}_\infty=\norm{\kappa_\theta(\phi)}_\infty=\beta(\phi,\theta),$
and $\widetilde\delta_{\mathrm{reg}}(\phi,\psi)=\rho(\psi)$ by definition of the collapse.
Thus
\[
\beta^{BK}_{\theta,\lambda,p,\alpha}(\phi,\psi)
=\norm{\bigl(\beta(\phi,\theta),\ \lambda\rho(\psi)^\alpha\bigr)}_{\ell^p}.
\]
On the wedge side, $J(\phi)\in\mathcal C$ and $J(\psi)\in\mathcal Y$, so by Definition~\ref{def:lp-wedge},
\[
d_{\vee,p}\bigl(J(\phi),J(\psi)\bigr)
=\norm{\bigl(\beta(\phi,\theta),\ d_{\mathrm{reg}}(\psi,\ast)\bigr)}_{\ell^p}
=\norm{\bigl(\beta(\phi,\theta),\ \lambda\rho(\psi)^\alpha\bigr)}_{\ell^p},
\]
which is exactly the same value. The remaining cross case is symmetric.
\end{proof}
\begin{proposition}
\label{prop:bk-attachment-closure}
Fix $\theta\in\mathcal C$.
Then:
\begin{enumerate}[label=\textup{(\roman*)}]
\item For every $\phi\in\mathcal C$, $\operatorname{dist}_{\beta^{BK}_{\theta,\lambda,p,\alpha}}(\phi,\mathcal X\setminus\mathcal C)=\beta(\phi,\theta)$.
\item $\overline{\mathcal X\setminus\mathcal C}^{\,\beta^{BK}_{\theta,\lambda,p,\alpha}}\cap \mathcal C=\{\theta\}$.
\item $\mathcal C$ is $\beta^{BK}_{\theta,\lambda,p,\alpha}$-closed in $\mathcal X$, and $\overline{\mathcal X\setminus\mathcal C}^{\,\beta^{BK}_{\theta,\lambda,p,\alpha}}=(\mathcal X\setminus\mathcal C)\ \cup\ \{\theta\}$.
\end{enumerate}
\end{proposition}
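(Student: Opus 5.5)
The whole argument runs through the wedge formula of Theorem~\ref{thm:BK-lp-wedge}. Write $d:=\beta^{BK}_{\theta,\lambda,p,\alpha}$.

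For (i), fix $\phi\in\mathcal C$. For every $\psi\notin\mathcal C$ the cross-case formula gives
\[
d(\phi,\psi)=\norm{\bigl(\beta(\phi,\theta),\lambda\rho(\psi)^\alpha\bigr)}_{\ell^p}\ge\beta(\phi,\theta),
\]
since an $\ell^p$-norm on $\R^2$ dominates each coordinate. This gives the lower bound $\operatorname{dist}_d(\phi,\mathcal X\setminus\mathcal C)\ge\beta(\phi,\theta)$.

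For the reverse inequality I use the sequence from the proof of Proposition~\ref{prop:anchors-nonequiv}: $\psi_n(a)=\frac in\omega(a)I_H$. These satisfy $\psi_n\notin\mathcal C$ and $\|\psi_n\|_{cb}=1/n$. Lemma~\ref{lem:deltareg-basic} gives $\rho(\psi_n)\le n^{-1/2}\to0$. By continuity of the $\ell^p$-norm,
\[
d(\phi,\psi_n)\to\norm{(\beta(\phi,\theta),0)}_{\ell^p}=\beta(\phi,\theta).
\]
So the infimum equals $\beta(\phi,\theta)$.

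For (ii), I combine (i) with two facts.
\begin{itemize}
\item A point $\phi\in\mathcal C$ lies in $\overline{\mathcal X\setminus\mathcal C}$ if and only if $\operatorname{dist}_d(\phi,\mathcal X\setminus\mathcal C)=0$.
\item Since $\beta$ is a metric on $\mathcal C$, $\beta(\phi,\theta)=0$ if and only if $\phi=\theta$.
\end{itemize}
For $\phi=\theta$ itself, the sequence $\psi_n$ above converges to $\theta$. So the intersection is exactly $\{\theta\}$.

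For (iii), I show $\mathcal X\setminus\mathcal C$ is open. Take $\psi\notin\mathcal C$; then $\rho(\psi)>0$, because $\rho$ vanishes only at $0\in\mathcal C$. For any $\phi\in\mathcal C$ the cross-case formula gives $d(\phi,\psi)\ge\lambda\rho(\psi)^\alpha$. Hence the ball of radius $\lambda\rho(\psi)^\alpha$ about $\psi$ misses $\mathcal C$, and $\mathcal C$ is closed.

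For the closure statement, the closure of $\mathcal X\setminus\mathcal C$ contains $\mathcal X\setminus\mathcal C$. Its remaining points lie in $\mathcal C$, and by (ii) these form exactly $\{\theta\}$. So $\overline{\mathcal X\setminus\mathcal C}=(\mathcal X\setminus\mathcal C)\cup\{\theta\}$.

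No step here is a real obstacle. The only point needing care is justifying $\rho(\psi)>0$ for $\psi\notin\mathcal C$, which follows because $\delta_{\mathrm{reg}}$ is a metric (Lemma~\ref{lem:deltareg-basic}) and $0\in\mathcal C$.
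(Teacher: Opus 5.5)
Your proposal is correct and follows essentially the same route as the paper. You use the cross-case wedge formula for the lower bound in (i), the sequence $\psi_n(a)=\frac{i}{n}\omega(a)I_H$ for the upper bound and to show $\theta$ is a limit point of $\mathcal X\setminus\mathcal C$, and the estimate $\beta^{BK}_{\theta,\lambda,p,\alpha}(\phi,\psi)\ge\lambda\rho(\psi)^\alpha>0$ for closedness of $\mathcal C$; your open-ball phrasing of that last step is equivalent to the paper's sequential argument.
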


\begin{proof}
(i) If $\psi\notin\mathcal C$, then by Theorem~\ref{thm:BK-lp-wedge}, $\beta^{BK}_{\theta,\lambda,p,\alpha}(\phi,\psi)=\norm{(\beta(\phi,\theta),\lambda\rho(\psi)^\alpha)}_{\ell^p}\ge \beta(\phi,\theta).$
Thus $\operatorname{dist}(\phi,\mathcal X\setminus\mathcal C)\ge \beta(\phi,\theta)$.

Conversely, pick any nonzero state $\omega$ on $A$ and define $\psi_n(a):=\frac{i}{n}\,\omega(a)\,I_H$, as in the proof of Proposition~\ref{prop:anchors-nonequiv}.
Then $\psi_n\notin\mathcal C$ and 
$\rho(\psi_n)\le \|\psi_n\|_{cb}^{1/2}\to 0$ (Lemma~\ref{lem:deltareg-basic}).
Hence
\[
\beta^{BK}_{\theta,\lambda,p,\alpha}(\phi,\psi_n)
=\norm{(\beta(\phi,\theta),\lambda\rho(\psi_n)^\alpha)}_{\ell^p}\longrightarrow \beta(\phi,\theta),
\]
so $\operatorname{dist}(\phi,\mathcal X\setminus\mathcal C)\le \beta(\phi,\theta)$ and equality holds.

(ii) A point $\phi\in\mathcal C$ lies in $\overline{\mathcal X\setminus\mathcal C}^{\,\beta^{BK}_{\theta,\lambda,p,\alpha}}$ iff
$\operatorname{dist}_{\beta^{BK}_{\theta,\lambda,p,\alpha}}(\phi,\mathcal X\setminus\mathcal C)=0$.
By (i) this is equivalent to $\beta(\phi,\theta)=0$, i.e.\ $\phi=\theta$.

(iii) Let $\{\phi_n\}_{n}\subseteq \mathcal C$ and suppose that $\phi_n\to\psi$ in $\beta^{BK}_{\theta,\lambda,p,\alpha}.$
We claim that \(\psi\in\mathcal C\). Indeed, if $\psi\notin\mathcal C$, then by Theorem~\ref{thm:BK-lp-wedge}, for every \(n\),
$\beta^{BK}_{\theta,\lambda,p,\alpha}(\phi_n,\psi)=
\Bigl\|\bigl(\beta(\phi_n,\theta),\,\lambda\,\rho(\psi)^\alpha\bigr)\Bigr\|_{\ell^p}.$
Since $\psi\notin\mathcal C$, we have $\psi\neq 0$, because $0\in\mathcal C$, and hence $\rho(\psi)=\delta_{\mathrm{reg}}(\psi,0)>0.$
Therefore $\beta^{BK}_{\theta,\lambda,p,\alpha}(\phi_n,\psi)
\ge
\lambda\,\rho(\psi)^\alpha
>0$ for all $n$,
which contradicts \(\phi_n\to\psi\) in \(\beta^{BK}_{\theta,\lambda,p,\alpha}\).
Thus necessarily \(\psi\in\mathcal C\), proving that \(\mathcal C\) is closed.

Next we prove $\overline{\mathcal X\setminus\mathcal C}^{\,\beta^{BK}_{\theta,\lambda,p,\alpha}}
=
(\mathcal X\setminus\mathcal C)\cup\{\theta\}.$

The inclusion ``$\subseteq$'' follows immediately from part~(ii). 
For the reverse inclusion, it is enough to show that \(\theta\) belongs to the closure of
\(\mathcal X\setminus\mathcal C\). For this, let $\psi_n(a):=\frac{i}{n}\,\omega(a)\,I_H$ be as in the proof of part~(i). Apply the cross-case formula from Theorem~\ref{thm:BK-lp-wedge} with \(\phi=\theta\in\mathcal C\):
\[
\beta^{BK}_{\theta,\lambda,p,\alpha}(\theta,\psi_n)
=
\Bigl\|\bigl(\beta(\theta,\theta),\,\lambda\,\rho(\psi_n)^\alpha\bigr)\Bigr\|_{\ell^p}
=
\lambda\,\rho(\psi_n)^\alpha
\le
\lambda\,n^{-\alpha/2}.
\]
Hence $\beta^{BK}_{\theta,\lambda,p,\alpha}(\theta,\psi_n)\to 0,$ i.e., 
so $\psi_n\to\theta\) in \(\beta^{BK}_{\theta,\lambda,p,\alpha}$.
Therefore $\theta\in
\overline{\mathcal X\setminus\mathcal C}^{\,\beta^{BK}_{\theta,\lambda,p,\alpha}}.$
\end{proof}

\begin{example}
Assume $(A,H)=(\C,\C)$. Every $\phi\in\CB(\C,\C)$ is multiplication by a scalar: $\phi=\phi_z$, $\phi_z(\lambda)=z\lambda$, $z\in\C$.
Under the identification $\mathcal X\cong\C$, the cb-norm is $\|\phi_z\|_{cb}=|z|$ and
$\mathcal C=\CP(\C,\C)\cong [0,\infty)$.

Fix a cp map $\theta=\phi_t$ with $t\ge 0$. For the max-glue BK metric $d_\theta=\beta^{BK}_{\theta,\lambda,\infty,\alpha}$,
\eqref{eq:1-dim} and Theorem~\ref{thm:BK-lp-wedge} give:
\begin{enumerate}[label=\textup{(\roman*)}]
\item If $c_1,c_2\ge 0$, then $d_\theta(\phi_{c_1},\phi_{c_2})=|\sqrt{c_1}-\sqrt{c_2}|$.
\item If $z,w\in\C\setminus[0,\infty)$, then $d_\theta(\phi_z,\phi_w)=\lambda\bigl(\delta_{\mathrm{reg}}(\phi_z,\phi_w)\bigr)^\alpha$.
\item If $c\ge 0$ and $z\in\C\setminus[0,\infty)$, then
\[
d_\theta(\phi_c,\phi_z)=\max\Bigl\{|\sqrt c-\sqrt t|,\ \lambda\bigl(\rho(\phi_z)\bigr)^\alpha\Bigr\},
\qquad
\rho(\phi_z)=\delta_{\mathrm{reg}}(\phi_z,0).
\]
\end{enumerate}
\end{example}

\section{Vietoris--Rips and \v{C}ech complexes for BK metric spaces}

Let $A$ be a unital $C^*$-algebra and $H$ a Hilbert space. Fix $\lambda>0$, $\alpha\in(0,1]$, and $p\in[1,\infty]$, and set $d_\theta:=\beta^{BK}_{\theta,\lambda,p,\alpha}$, for simplicity. By the $\ell^p$ wedge decomposition (Theorem~\ref{thm:BK-lp-wedge}), we identify
$(\mathcal X,d_\theta)\ \cong\ (\mathcal C,\beta,\theta)\ \vee_p\ (\mathcal Y,d_{\mathrm{reg}},\ast),$
so that: (i) on $\mathcal C$ we have $d_\theta=\beta$; (ii)
on $\mathcal Y^\circ:=\mathcal Y\setminus\{\ast\}\cong \mathcal X\setminus\mathcal C$ we have
      $d_\theta=d_{\mathrm{reg}}=\lambda\,\delta_{\mathrm{reg}}^\alpha$; and 
(iii) for $x\in\mathcal C$ and $y\in\mathcal Y$,
\begin{equation}\label{eq:cross-distance}
d_\theta(x,y)=\bigl\|\bigl(\beta(x,\theta),\ d_{\mathrm{reg}}(y,\ast)\bigr)\bigr\|_{\ell^p}.
\end{equation}

It is convenient to introduce the radial functions $r_{\mathcal C}(x):=\beta(x,\theta)$ for all $x\in\mathcal C$, and $r_{\mathcal Y}(y):=d_{\mathrm{reg}}(y,\ast)$ for all $y\in\mathcal Y.$ 
With this notation, \eqref{eq:cross-distance} becomes
$d_\theta(x,y)=\bigl\|\bigl(r_{\mathcal C}(x), r_{\mathcal Y}(y)\bigr)\bigr\|_{\ell^p},$ for all $ x\in\mathcal C,\ y\in\mathcal Y.$

\subsection{Vietoris--Rips complexes}
We now study the Vietoris--Rips complexes associated to the BK metric.
\begin{definition}[cf. {\cite{Hausmann}\cite{Gromov}}]
Let $(Z,d)$ be a metric space and $t\ge 0$.
The Vietoris--Rips complex $\mathrm{VR}_t(Z,d)$ is the abstract simplicial complex with vertex set $Z$
such that a finite subset $\sigma\subseteq Z$ spans a simplex iff $d(z,z')\le t$ for all $z,z'\in\sigma.$
Equivalently, $\sigma$ is a simplex iff $\diam(\sigma)\le t$.
\end{definition}
We begin by characterizing cross edges in terms of the radial coordinates.
\begin{lemma}
\label{lem:cross-edge}
Let $x\in\mathcal C$ and $y\in \mathcal Y^\circ$. Then $\{x,y\}$ is an edge of $\mathrm{VR}_t(\mathcal X,d_\theta)$ iff $\bigl\|\bigl(r_{\mathcal C}(x), r_{\mathcal Y}(y)\bigr)\bigr\|_{\ell^p}
\le\ t.$
\end{lemma}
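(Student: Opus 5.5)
This is essentially an unwinding of definitions, so the plan is to chain three facts. First, by the definition of $\mathrm{VR}_t(\mathcal X,d_\theta)$, a two-element subset $\{x,y\}$ spans an edge iff its diameter is at most $t$. For a two-point set the diameter is just $d_\theta(x,y)$, so the edge condition is exactly $d_\theta(x,y)\le t$. Since $x\in\mathcal C$ and $y\in\mathcal Y^\circ\cong\mathcal X\setminus\mathcal C$, the points are distinct, so $\{x,y\}$ really is a two-element set and not a degenerate vertex.

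Second, I would evaluate $d_\theta(x,y)$ with the cross-case formula of Theorem~\ref{thm:BK-lp-wedge}, in the form \eqref{eq:cross-distance}. Here $x\in\mathcal C$ and $y\notin\mathcal C$, and under the identification $J$ the point $y$ is a non-basepoint of $\mathcal Y$. This gives
\[
d_\theta(x,y)=\bigl\|\bigl(\beta(x,\theta),\ \lambda\rho(y)^\alpha\bigr)\bigr\|_{\ell^p}.
\]
Third, I would rewrite the two entries with the radial functions. By definition $r_{\mathcal C}(x)=\beta(x,\theta)$. By Definition~\ref{def:bk-Y-space}, $r_{\mathcal Y}(y)=d_{\mathrm{reg}}(y,\ast)=\lambda\rho(y)^\alpha$. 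Substituting gives $d_\theta(x,y)=\|(r_{\mathcal C}(x),r_{\mathcal Y}(y))\|_{\ell^p}$, and combining this with the first step yields the stated equivalence.

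There is no genuine obstacle here. The only point needing care is bookkeeping: one must check that $y\in\mathcal Y^\circ$ puts us in the cross case of Theorem~\ref{thm:BK-lp-wedge}, and not in the case where $y$ is the glued basepoint $\ast$, which is identified with $\theta\in\mathcal C$. This also makes clear that the argument covers $p=\infty$ without change, with the $\ell^\infty$-norm read as the maximum.
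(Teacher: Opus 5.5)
Your proposal is correct and follows the paper's proof: the edge condition is $d_\theta(x,y)\le t$, and the cross-distance formula \eqref{eq:cross-distance} (the cross case of Theorem~\ref{thm:BK-lp-wedge}) gives $d_\theta(x,y)=\bigl\|\bigl(r_{\mathcal C}(x),r_{\mathcal Y}(y)\bigr)\bigr\|_{\ell^p}$. Your extra bookkeeping, that $x\neq y$ and that $y\in\mathcal Y^\circ$ is not the glued basepoint, only spells out what the paper's one-line argument takes for granted.
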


\begin{proof}This follows directly from the cross-distance formula \eqref{eq:cross-distance}.
\end{proof}
We next characterize mixed simplices in terms of a maximal radial constraint.
\begin{proposition}
\label{prop:rips-mixed}
Fix $t\ge 0$ and let $\sigma\subseteq\mathcal C$ and $\tau\subseteq\mathcal Y^\circ$ be finite, nonempty subsets.
Set
\[
A:=\max_{x\in\sigma} r_{\mathcal C}(x),
\qquad
B:=\max_{y\in\tau} r_{\mathcal Y}(y).
\]
Then $\sigma\cup\tau$ is a simplex of $\mathrm{VR}_t(\mathcal X,d_\theta)$ if and only if the following conditions are satisfied:
\begin{enumerate}
\item $\sigma$ is a simplex of $\mathrm{VR}_t(\mathcal C,\beta)$;
\item $\tau$ is a simplex of $\mathrm{VR}_t(\mathcal Y^\circ,d_{\mathrm{reg}})$;
\item $\bigl\|\bigl(A, B\bigr)\bigr\|_{\ell^p}\le t$.
\end{enumerate}
\end{proposition}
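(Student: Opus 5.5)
The plan is to unpack the definition of a Vietoris--Rips simplex, $\diam(\sigma\cup\tau)\le t$, and sort the pairs in $\sigma\cup\tau$ into three types: both in $\sigma$, both in $\tau$, and one in each. By the wedge identification (Theorem~\ref{thm:BK-lp-wedge}), $d_\theta$ equals $\beta$ on $\mathcal C$ and $d_{\mathrm{reg}}$ on $\mathcal Y^\circ$. So the pairs of the first two types are at distance at most $t$ exactly when conditions (1) and (2) hold. What remains is to show that all cross pairs are at distance at most $t$ if and only if condition (3) holds.

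For the cross pairs, Lemma~\ref{lem:cross-edge} (equivalently \eqref{eq:cross-distance}) says that $\{x,y\}$ with $x\in\sigma$, $y\in\tau$ is an edge precisely when $\|(r_{\mathcal C}(x),r_{\mathcal Y}(y))\|_{\ell^p}\le t$. The key observation is that for every $p\in[1,\infty]$ the $\ell^p$-norm on $[0,\infty)^2$ is monotone nondecreasing in each coordinate. Since both radial functions are nonnegative, this gives
\[
\max_{x\in\sigma,\,y\in\tau}\bigl\|\bigl(r_{\mathcal C}(x),r_{\mathcal Y}(y)\bigr)\bigr\|_{\ell^p}=\bigl\|(A,B)\bigr\|_{\ell^p}.
\]
The inequality ``$\le$'' follows from monotonicity. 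The equality holds because $\sigma$ and $\tau$ are finite and nonempty, so there are points $x_0\in\sigma$ and $y_0\in\tau$ with $r_{\mathcal C}(x_0)=A$ and $r_{\mathcal Y}(y_0)=B$, and the pair $(x_0,y_0)$ attains the maximum.

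Hence every cross pair satisfies the edge condition if and only if $\|(A,B)\|_{\ell^p}\le t$. For the forward direction, apply the edge condition to the pair $(x_0,y_0)$. For the converse, use monotonicity to bound each cross distance by $\|(A,B)\|_{\ell^p}$. Combining the three pair types gives the claimed equivalence.

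I do not expect a real obstacle here. The only point needing care is the joint maximizer: it uses that $\sigma$ and $\tau$ are finite and nonempty, and that the two coordinates of a cross pair can be maximized independently, since $x$ and $y$ range over separate sets. Monotonicity of the $\ell^p$-norm, including the case $p=\infty$, is elementary.
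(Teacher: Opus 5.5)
Your proposal is correct and follows essentially the same argument as the paper. Both use the wedge identification to turn the within-side pairs into conditions (1) and (2), and both reduce the cross pairs to $\|(A,B)\|_{\ell^p}\le t$ by coordinatewise monotonicity of the $\ell^p$-norm, with necessity witnessed by the pair $(x_0,y_0)$ that attains both maxima.
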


\begin{proof}
If $\sigma\cup\tau$ is a simplex in $\mathrm{VR}_t(\mathcal X,d_\theta)$, then in particular all pairwise distances
between vertices in $\sigma$ are $\le t$, which is exactly (1) since $d_\theta|_{\mathcal C}=\beta$.
Similarly all pairwise distances between vertices in $\tau$ are $\le t$, giving (2) since $d_\theta|_{\mathcal Y}=d_{\mathrm{reg}}$.

For the cross pairs, $\sigma\cup\tau$ being a simplex means that for all $x\in\sigma$ and $y\in\tau$, $d_\theta(x,y)=\bigl\|\bigl(r_{\mathcal C}(x), r_{\mathcal Y}(y)\bigr)\bigr\|_{\ell^p}\le t.$

Since$ \bigl\|\bigl(\cdot, \cdot\bigr)\bigr\|_{\ell^p}$ is increasing in each variable, this is equivalent to the single inequality
\[ \Bigl\|\Bigl(\max_{x\in\sigma}r_{\mathcal C}(x),\ \max_{y\in\tau} r_{\mathcal Y}(y)\Bigr)\Bigr\|_{\ell^p}
\le t,
\]
i.e.\  $\bigl\|\bigl(A, B\bigr)\bigr\|_{\ell^p}\le t$. (Necessity uses the particular pair $(x_0,y_0)$ where the maxima are attained.)

Conversely, if (1) and (2) hold and $\bigl\|\bigl(A, B\bigr)\bigr\|_{\ell^p}\le t$, then for each cross pair $(x,y)$ we have
$r_{\mathcal C}(x)\le A$ and $r_{\mathcal Y}(y)\le B$, hence
$d_\theta(x,y)= \bigl\|\bigl(r_{\mathcal C}(x),r_{\mathcal Y}(y)\bigr)\bigr\|_{\ell^p}
\le \bigl\|\bigl(A, B\bigr)\bigr\|_{\ell^p}
\le t.$
Thus all pairwise distances in $\sigma\cup\tau$ are $\le t$, so it is a simplex.
\end{proof}

To describe the mixed part more structurally, we introduce radial sublevel sets and a join-type construction.

\begin{definition}
\label{def:rips-join}
(i) For $u\ge 0$ define the radial sublevel sets
\[
\mathcal C^{\le u}:=\{x\in\mathcal C:\ r_{\mathcal C}(x)\le u\},
\qquad
\mathcal Y^{\le u}:=\{y\in\mathcal Y^\circ:\ r_{\mathcal Y}(y)\le u\}.
\]
(ii) Given a simplicial complex \(K\) on vertex set \(V(K)\) and a subset \(W\subseteq V(K)\), we write
\(K|_W\) for the induced subcomplex on \(W\).
In particular, we set
\[
\mathrm{VR}_t(\mathcal C^{\le u})
:=\mathrm{VR}_t(\mathcal C,\beta)\big|_{\mathcal C^{\le u}},
\qquad
\mathrm{VR}_t(\mathcal Y^{\le v})
:=\mathrm{VR}_t(\mathcal Y^\circ,d_{\mathrm{reg}})\big|_{\mathcal Y^{\le v}}.
\]
(iii) Let \(K,L\) be simplicial complexes on disjoint vertex sets.
Define their mixed join to be the collection
\[
K\ \star\ L
:=\bigl\{\sigma\cup\tau:\ \sigma\in K,\ \tau\in L,\ \sigma\neq\varnothing,\ \tau\neq\varnothing\bigr\}.
\]
(Equivalently, \(K\star L\) is the subcollection of simplices of the usual join \(K*L\) that meet both factors.)
\end{definition}
The following theorem identifies the mixed simplices as a union of constrained joins.
\begin{theorem}
\label{thm:rips-join}
For each \(t\ge 0\), the set of simplices of \(\mathrm{VR}_t(\mathcal X,d_\theta)\) that meet both
\(\mathcal C\) and \(\mathcal Y^\circ\) is exactly
\[
\bigcup_{\substack{u,v\ge 0\\ \norm{(u,v)}_{\ell^p}\le t}}
\Bigl(\mathrm{VR}_t(\mathcal C^{\le u})\ \star\ \mathrm{VR}_t(\mathcal Y^{\le v})\Bigr).
\]
In particular, when \(p=\infty\) one has the global decomposition
\begin{equation}\label{eq:rips-pinf-decomp}
\mathrm{VR}_t(\mathcal X,d_\theta)
=
\mathrm{VR}_t(\mathcal C,\beta)\ \cup\ \mathrm{VR}_t(\mathcal Y^\circ,d_{\mathrm{reg}})
\ \cup\ \Bigl(\mathrm{VR}_t(\mathcal C^{\le t}) \star \mathrm{VR}_t(\mathcal Y^{\le t})\Bigr).
\end{equation}
\end{theorem}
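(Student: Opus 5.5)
The plan is to reduce everything to Proposition~\ref{prop:rips-mixed} and prove the two inclusions directly.

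For ``$\subseteq$'': let $\sigma\cup\tau$ be a simplex of $\mathrm{VR}_t(\mathcal X,d_\theta)$ with $\sigma=(\sigma\cup\tau)\cap\mathcal C\neq\varnothing$ and $\tau=(\sigma\cup\tau)\cap\mathcal Y^\circ\neq\varnothing$. Proposition~\ref{prop:rips-mixed} gives three facts: $\sigma\in\mathrm{VR}_t(\mathcal C,\beta)$, $\tau\in\mathrm{VR}_t(\mathcal Y^\circ,d_{\mathrm{reg}})$, and $\|(A,B)\|_{\ell^p}\le t$. Set $u:=A$ and $v:=B$. Then $\sigma\subseteq\mathcal C^{\le u}$ and $\tau\subseteq\mathcal Y^{\le v}$ by definition of the maxima. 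Since induced subcomplexes contain every simplex of the ambient complex whose vertices lie in the subset, $\sigma\in\mathrm{VR}_t(\mathcal C^{\le u})$ and $\tau\in\mathrm{VR}_t(\mathcal Y^{\le v})$. Hence $\sigma\cup\tau$ lies in the $(u,v)$ term of the union.

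For ``$\supseteq$'': take $\sigma\cup\tau$ in the $(u,v)$ term, with $\|(u,v)\|_{\ell^p}\le t$ and $\sigma,\tau$ nonempty. Conditions (1) and (2) of Proposition~\ref{prop:rips-mixed} hold because induced subcomplexes are subcomplexes. For condition (3), we have $A\le u$ and $B\le v$, and the $\ell^p$ norm is monotone on $[0,\infty)^2$, so $\|(A,B)\|_{\ell^p}\le\|(u,v)\|_{\ell^p}\le t$. Hence $\sigma\cup\tau$ is a simplex meeting both sectors.

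For $p=\infty$: the constraint $\max\{u,v\}\le t$ means every term in the union is contained in the $(t,t)$ term, since sublevel sets and their induced complexes grow monotonically with $u$ and $v$. The union therefore equals $\mathrm{VR}_t(\mathcal C^{\le t})\star\mathrm{VR}_t(\mathcal Y^{\le t})$. It remains to split the simplices of $\mathrm{VR}_t(\mathcal X,d_\theta)$ into three kinds:
\begin{itemize}
\item those contained in $\mathcal C$, which are exactly the simplices of $\mathrm{VR}_t(\mathcal C,\beta)$ because $d_\theta|_{\mathcal C}=\beta$;
\item those contained in $\mathcal X\setminus\mathcal C\cong\mathcal Y^\circ$, which are exactly the simplices of $\mathrm{VR}_t(\mathcal Y^\circ,d_{\mathrm{reg}})$ because $d_\theta=d_{\mathrm{reg}}$ there (Theorem~\ref{thm:BK-lp-wedge});
\item the mixed ones, handled above.
\end{itemize}
This yields \eqref{eq:rips-pinf-decomp}.

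Nothing here is a real obstacle. The only point needing care is bookkeeping: the vertex $\theta$ belongs to $\mathcal C$, while $\ast$ is not a vertex of $\mathcal Y^\circ$, so the splitting of a simplex by sector is well defined and no identification issue arises.
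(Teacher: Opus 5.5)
Your proposal is correct and follows essentially the same route as the paper. Both arguments split a mixed simplex into its $\mathcal C$- and $\mathcal Y^\circ$-parts, take $u,v$ to be the maximal radii, and use coordinatewise monotonicity of the $\ell^p$ norm in both directions. The only differences are cosmetic: you cite Proposition~\ref{prop:rips-mixed} where the paper redoes the cross-pair argument inline, and you spell out why at $p=\infty$ the union collapses to the $(t,t)$ term, which the paper only asserts.
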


\begin{proof}
Fix \(t\ge 0\), and write \(\mathrm{VR}_t:=\mathrm{VR}_t(\mathcal X,d_\theta)\), for simplicity.

\smallskip
\noindent

($\subseteq$) Let \(\eta\in \mathrm{VR}_t\) be a simplex that meets both \(\mathcal C\) and \(\mathcal Y^\circ\).
Set \(\sigma:=\eta\cap\mathcal C\) and \(\tau:=\eta\cap\mathcal Y^\circ\), so \(\sigma,\tau\) are finite and nonempty and
\(\eta=\sigma\cup\tau\) (disjoint union of vertex sets).
Define
\[
u:=\max_{x\in\sigma} r_{\mathcal C}(x),\qquad v:=\max_{y\in\tau} r_{\mathcal Y}(y).
\]
Since \(\eta\) is a simplex, all pairwise distances in \(\eta\) are \(\le t\).
In particular, \(d_\theta|_{\mathcal C}=\beta\) implies \(\sigma\in \mathrm{VR}_t(\mathcal C,\beta)\),
and \(d_\theta|_{\mathcal Y^\circ}=d_{\mathrm{reg}}\) implies \(\tau\in \mathrm{VR}_t(\mathcal Y^\circ,d_{\mathrm{reg}})\).
Moreover, for any cross pair \((x,y)\in\sigma\times\tau\),
\[
d_\theta(x,y)=\norm{\bigl(r_{\mathcal C}(x),r_{\mathcal Y}(y)\bigr)}_{\ell^p}\le t.
\]
By monotonicity of \(\ell^p\)-norm in each coordinate, this yields \(\norm{(u,v)}_{\ell^p}\le t\).
Also, by definition of \(u\) and \(v\), we have \(\sigma\subseteq\mathcal C^{\le u}\) and \(\tau\subseteq\mathcal Y^{\le v}\),
hence \(\sigma\in \mathrm{VR}_t(\mathcal C^{\le u})\) and \(\tau\in \mathrm{VR}_t(\mathcal Y^{\le v})\).
Therefore \(\eta=\sigma\cup\tau\in \mathrm{VR}_t(\mathcal C^{\le u})\star \mathrm{VR}_t(\mathcal Y^{\le v})\)
for some \((u,v)\) with \(\norm{(u,v)}_{\ell^p}\le t\), proving the containment.

\smallskip
\noindent

($\supseteq$) Fix \(u,v\ge 0\) with \(\norm{(u,v)}_{\ell^p}\le t\) and let
\(\eta=\sigma\cup\tau\in \mathrm{VR}_t(\mathcal C^{\le u})\star \mathrm{VR}_t(\mathcal Y^{\le v})\).
Then \(\sigma,\tau\) are nonempty, \(\sigma\subseteq \mathcal C^{\le u}\), \(\tau\subseteq \mathcal Y^{\le v}\),
\(\sigma\in\mathrm{VR}_t(\mathcal C,\beta)\), and \(\tau\in\mathrm{VR}_t(\mathcal Y^\circ,d_{\mathrm{reg}})\).
Hence all within-\(\mathcal C\) and within-\(\mathcal Y^\circ\) distances inside \(\eta\) are \(\le t\).

For a cross pair \(x\in\sigma\), \(y\in\tau\), we have \(r_{\mathcal C}(x)\le u\) and \(r_{\mathcal Y}(y)\le v\), so
\[
d_\theta(x,y)=\norm{\bigl(r_{\mathcal C}(x),r_{\mathcal Y}(y)\bigr)}_{\ell^p}
\le \norm{(u,v)}_{\ell^p}\le t.
\]
Thus all pairwise distances in $\eta$ are $\le t$, i.e. \(\eta\in \mathrm{VR}_t(\mathcal X,d_\theta)\),
and by construction $\eta$ meets both sides. This proves equality of mixed parts.

\smallskip
\noindent

Finally, assume $p=\infty$. Then $\norm{(u,v)}_{\ell^\infty}\le t$ iff $u\le t$ and $v\le t$.
Hence the mixed part of \(\mathrm{VR}_t(\mathcal X,d_\theta)\) is exactly
$\mathrm{VR}_t(\mathcal C^{\le t})\star \mathrm{VR}_t(\mathcal Y^{\le t})$.
Since the full complex $\mathrm{VR}_t(\mathcal X,d_\theta)$ consists of:
(i) simplices entirely in $\mathcal C$, i.e. $\mathrm{VR}_t(\mathcal C,\beta)$;
(ii) simplices entirely in $\mathcal Y^\circ$, i.e. $\mathrm{VR}_t(\mathcal Y^\circ,d_{\mathrm{reg}})$;
and (iii) mixed simplices,
we obtain \eqref{eq:rips-pinf-decomp}. 
\end{proof}

\begin{remark}
The cross edges (and hence all mixed simplices) are governed purely by the two scalar functions
$r_{\mathcal C}$ and $r_{\mathcal Y}$.
Thus, in the Rips filtration $\{\mathrm{VR}_t(\mathcal X,d_\theta)\}_{t\ge 0}$, the two ``large'' pieces
$\mathrm{VR}_t(\mathcal C)$ and $\mathrm{VR}_t(\mathcal Y^\circ)$ are always present, and the only way they interact is through
a radius-thresholded, join-like family of simplices near the glued basepoints.
This is a strong structural simplification compared to an arbitrary metric space.
\end{remark}
We restate \eqref{eq:rips-pinf-decomp} for finite subsets $S\subset\mathcal X$. For this, set $S_{\mathcal C}:=S\cap\mathcal C,$ $S_{\mathcal Y}:=S\cap(\mathcal X\setminus\mathcal C),$ and consider the sublevel vertex sets
\[
S_{\mathcal C}^{\le t}:=\{x\in S_{\mathcal C}: r_{\mathcal C}(x)\le t\},
\qquad
S_{\mathcal Y}^{\le t}:=\{y\in S_{\mathcal Y}: r_{\mathcal Y}(y)\le t\}.
\]
For $p=\infty$, the Rips complex decomposes exactly as a simplicial join on finite clouds.
\begin{corollary}
\label{prop:rips_exact_pinf}
For every finite $S\subset\mathcal X$ and every $t\ge 0$,
\[
\VR_t(S,d_\theta)
=
\VR_t(S_{\mathcal C},\beta)\ \cup\ \VR_t(S_{\mathcal Y},d_{\mathrm{reg}})\
\cup\ \Bigl(\VR_t(S_{\mathcal C}^{\le t},\beta)\ \star\ \VR_t(S_{\mathcal Y}^{\le t},d_{\mathrm{reg}})\Bigr).
\]
\end{corollary}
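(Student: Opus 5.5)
The plan is to obtain the corollary by restricting the global decomposition \eqref{eq:rips-pinf-decomp} of Theorem~\ref{thm:rips-join} to the vertex set $S$. We use the standing assumption $p=\infty$ from the sentence preceding the statement. The key observation is that Vietoris--Rips complexes are flag complexes determined by pairwise distances. Hence, for any subset $S\subseteq Z$, one has $\VR_t(S,d|_S)=\VR_t(Z,d)|_S$, the induced subcomplex on $S$. So $\VR_t(S,d_\theta)$ consists exactly of the simplices of $\VR_t(\mathcal X,d_\theta)$ whose vertices all lie in $S$.

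The steps, in order, are as follows.

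First, split each simplex $\eta\subseteq S$ of $\VR_t(S,d_\theta)$ according to whether it lies in $S_{\mathcal C}$, lies in $S_{\mathcal Y}$, or meets both. Here $S_{\mathcal Y}\subseteq\mathcal Y^\circ$ under the identification of Theorem~\ref{thm:BK-lp-wedge}; the basepoint $\ast$ is not a point of $\mathcal X$, so $S_{\mathcal C}$ and $S_{\mathcal Y}$ partition $S$. Simplices in $S_{\mathcal C}$ are exactly the simplices of $\VR_t(S_{\mathcal C},\beta)$, since $d_\theta|_{\mathcal C}=\beta$. Likewise, simplices in $S_{\mathcal Y}$ are exactly those of $\VR_t(S_{\mathcal Y},d_{\mathrm{reg}})$, since $d_\theta=d_{\mathrm{reg}}$ on $\mathcal X\setminus\mathcal C$.

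Second, treat a mixed simplex $\eta=\sigma\cup\tau$ with $\sigma=\eta\cap S_{\mathcal C}$ and $\tau=\eta\cap S_{\mathcal Y}$, both nonempty. By the $p=\infty$ case of Theorem~\ref{thm:rips-join}, or directly by Proposition~\ref{prop:rips-mixed} with $\|(A,B)\|_{\ell^\infty}\le t$, $\eta$ is a simplex if and only if three conditions hold: $\sigma\in\VR_t(\mathcal C,\beta)$, $\tau\in\VR_t(\mathcal Y^\circ,d_{\mathrm{reg}})$, and every vertex satisfies $r_{\mathcal C}(x)\le t$ and $r_{\mathcal Y}(y)\le t$. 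The last condition says exactly that $\sigma\subseteq S_{\mathcal C}^{\le t}$ and $\tau\subseteq S_{\mathcal Y}^{\le t}$. The induced-subcomplex observation then converts the first two conditions into $\sigma\in\VR_t(S_{\mathcal C}^{\le t},\beta)$ and $\tau\in\VR_t(S_{\mathcal Y}^{\le t},d_{\mathrm{reg}})$. This is precisely membership in the mixed join $\star$ of Definition~\ref{def:rips-join}(iii). The converse inclusion follows by reading the same equivalences backwards.

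Third, take the union of the three classes to obtain the displayed identity.

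There is no real obstacle. The only point requiring care is bookkeeping: checking that restricting $\VR_t(\mathcal C^{\le t})$ to $S$ gives $\VR_t(S_{\mathcal C}^{\le t},\beta)$, i.e. that restriction to a radial sublevel set commutes with restriction to $S$. This holds because both operations are induced subcomplexes of a flag complex on intersected vertex sets.
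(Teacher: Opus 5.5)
Your proposal is correct and matches the paper's approach. The paper gives no separate proof and simply presents the corollary as the $p=\infty$ decomposition \eqref{eq:rips-pinf-decomp} restated for finite clouds; your argument (that $\VR_t(S,d_\theta)$ is the induced subcomplex of $\VR_t(\mathcal X,d_\theta)$ on $S$, combined with the $p=\infty$ mixed-simplex criterion of Proposition~\ref{prop:rips-mixed}) spells out exactly the bookkeeping the paper leaves implicit.
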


\begin{remark}
Corollary~\ref{prop:rips_exact_pinf} makes $\VR_t(S,d_\theta)$ explicitly computable once one knows:
 the distance matrix of $\beta$ on $S_{\mathcal C}$ and the radius values $r_{\mathcal C}$, and 
the distance matrix of $d_{\mathrm{reg}}$ on $S_{\mathcal Y}$ and the radius values $r_{\mathcal Y}$.
No further mixed distance computations are needed.
\end{remark}

BK Rips complexes can have nontrivial homology already at the level of
finite point clouds, and that the effect is driven purely by the wedge join term, as shown below.

\begin{corollary}[A universal loop in $\VR_t$]\label{ex:universal_loop}
Fix $p=\infty$ and a scale $t>0$.
Assume a finite point cloud $S\subset\mathcal X$ has the following properties:
\begin{enumerate}
\item $S_{\mathcal C}^{\le t}=\{x_1,x_2\}$ and $\beta(x_1,x_2)>t$ (so $\VR_t(S_{\mathcal C}^{\le t},\beta)$ is two isolated vertices);
\item $S_{\mathcal Y}^{\le t}=\{y_1,y_2\}$ and $d_{\mathrm{reg}}(y_1,y_2)>t$ (so $\VR_t(S_{\mathcal Y}^{\le t},d_{\mathrm{reg}})$ is two isolated vertices);
\item there are no other vertices in $S$ (i.e.\ $S=S_{\mathcal C}^{\le t}\cup S_{\mathcal Y}^{\le t}$).
\end{enumerate}
Then $\VR_t(S,d_\theta)$ is the clique complex of the complete bipartite graph $K_{2,2}$,
hence is a $4$-cycle and has homotopy type $S^1$.
\end{corollary}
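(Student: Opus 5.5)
The plan is to read off $\VR_t(S,d_\theta)$ from Corollary~\ref{prop:rips_exact_pinf}, which gives the exact decomposition of the Rips complex of a finite cloud when $p=\infty$. By hypothesis (3), $S_{\mathcal C}=\{x_1,x_2\}$ and $S_{\mathcal Y}=\{y_1,y_2\}$, and every vertex of $S$ already lies in the radial sublevel sets at scale $t$. So $S_{\mathcal C}^{\le t}=S_{\mathcal C}$ and $S_{\mathcal Y}^{\le t}=S_{\mathcal Y}$. By hypotheses (1) and (2), $\VR_t(S_{\mathcal C},\beta)$ and $\VR_t(S_{\mathcal Y},d_{\mathrm{reg}})$ each consist of two isolated vertices and have no edges.

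Next I will compute the mixed join term. Its simplices are $\sigma\cup\tau$ with $\sigma$ a nonempty simplex of $\{x_1,x_2\}$ and $\tau$ a nonempty simplex of $\{y_1,y_2\}$. Since both factors are discrete, $\sigma$ and $\tau$ must be singletons. The mixed simplices are therefore exactly the four edges $\{x_i,y_j\}$, and there are no higher mixed simplices. As a sanity check I will confirm each cross edge directly from \eqref{eq:cross-distance} with $p=\infty$: we have $d_\theta(x_i,y_j)=\max\{r_{\mathcal C}(x_i),r_{\mathcal Y}(y_j)\}\le t$ because both radii are at most $t$.

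Taking the union, the complex has vertices $x_1,x_2,y_1,y_2$ and edges exactly $\{x_i,y_j\}$, which form the complete bipartite graph $K_{2,2}$. There are no $2$-simplices. Any triple would contain either both $x$'s or both $y$'s, and those pairs are non-edges because $\beta(x_1,x_2)>t$ and $d_{\mathrm{reg}}(y_1,y_2)>t$. This agrees with the general fact that a Rips complex is the clique complex of its $1$-skeleton, and $K_{2,2}$ is triangle-free. The cycle $x_1y_1x_2y_2$ is therefore a $4$-cycle, which is homeomorphic to a circle and so has homotopy type $S^1$.

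No step is a genuine obstacle. The only point that needs care is ruling out higher-dimensional mixed simplices, and this follows from the mixed join definition together with the discreteness of both factors.
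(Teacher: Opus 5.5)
Your proposal is correct and follows essentially the same route as the paper. You apply Corollary~\ref{prop:rips_exact_pinf}, note that there are no within-side edges, observe that the join term contributes exactly the four cross edges $\{x_i,y_j\}$, and conclude that the triangle-free clique complex of $K_{2,2}$ is a $4$-cycle of homotopy type $S^1$.
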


\begin{proof}
By Corollary~\ref{prop:rips_exact_pinf}, since there are no within-side edges at scale $t$,
the only simplices are those coming from the join term:
every mixed pair $\{x_i,y_j\}$ is an edge because both radii are $\le t$,
and there are no triangles because there are no within-side edges.
Thus the 1-skeleton is exactly the graph $K_{2,2}$, and the flag (clique) complex is the graph itself.
A connected 4-cycle has homotopy type $S^1$.
\end{proof}

\usetikzlibrary{positioning}

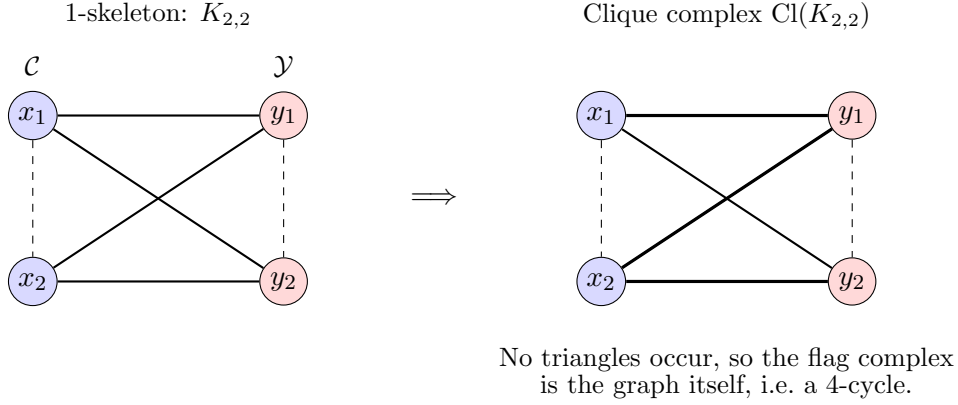
\begin{figure}[t]
\centering
\begin{tikzpicture}[scale=1.1,
    cp/.style={circle,draw,fill=blue!15,inner sep=2pt},
    ncp/.style={circle,draw,fill=red!15,inner sep=2pt}
]

\node[draw=none] at (1.5,2.2) {\small 1-skeleton: $K_{2,2}$};

\node[cp]  (x1L) at (0,1) {$x_1$};
\node[cp]  (x2L) at (0,-1) {$x_2$};
\node[ncp] (y1L) at (3,1) {$y_1$};
\node[ncp] (y2L) at (3,-1) {$y_2$};

\draw[thick] (x1L) -- (y1L);
\draw[thick] (x1L) -- (y2L);
\draw[thick] (x2L) -- (y1L);
\draw[thick] (x2L) -- (y2L);

\draw[dashed] (x1L) -- (x2L);
\draw[dashed] (y1L) -- (y2L);

\node[draw=none] at (0,1.6) {\small $\mathcal C$};
\node[draw=none] at (3,1.6) {\small $\mathcal Y$};

\node[draw=none] at (4.8,0) {$\Longrightarrow$};

\node[draw=none] at (8.3,2.2) {\small Clique complex $\mathrm{Cl}(K_{2,2})$};

\node[cp]  (x1R) at (6.8,1) {$x_1$};
\node[cp]  (x2R) at (6.8,-1) {$x_2$};
\node[ncp] (y1R) at (9.8,1) {$y_1$};
\node[ncp] (y2R) at (9.8,-1) {$y_2$};

\draw[very thick] (x1R) -- (y1R) -- (x2R) -- (y2R) -- cycle;

\draw[thick] (x1R) -- (y2R);
\draw[thick] (x2R) -- (y1R);

\draw[dashed] (x1R) -- (x2R);
\draw[dashed] (y1R) -- (y2R);

\node[draw=none,align=center] at (8.3,-2.1) {\small No triangles occur, so the flag complex\\[-1mm]
\small is the graph itself, i.e.\ a $4$-cycle.};

\end{tikzpicture}

\caption{Corollary~\ref{ex:universal_loop}: the BK complex at scale $t$ has 1-skeleton equal to the complete bipartite graph $K_{2,2}$, with bipartition $\{x_1,x_2\}\subset \mathcal C$ and $\{y_1,y_2\}\subset \mathcal Y$. Since there are no within-side edges, there are no triangles, and hence the flag (clique) complex adds no $2$-simplices. Therefore $\VR_t(S,d_\theta)$ is the graph itself, namely a $4$-cycle, and has homotopy type $S^1$.}
\label{fig:bk-k22-flag}
\end{figure}
The same mechanism produces higher-rank first homology.

\begin{corollary}
\label{ex:Km n}
Fix $p=\infty$ and $t>0$. Suppose
\[
S_{\mathcal C}^{\le t}=\{x_1,\dots,x_m\},\quad S_{\mathcal Y}^{\le t}=\{y_1,\dots,y_n\},
\]
and assume \emph{no} within-side edges at scale $t$:
$\beta(x_i,x_{i'})>t$ for $i\neq i'$ and $d_{\mathrm{reg}}(y_j,y_{j'})>t$ for $j\neq j'$.
Assume again that $S=S_{\mathcal C}^{\le t}\cup S_{\mathcal Y}^{\le t}$.

Then $\VR_t(S,d_\theta)$ is the graph $K_{m,n}$ viewed as a 1-dimensional simplicial complex.
In particular, for $m,n\ge 1$ it is connected and
\[
\mathrm{rank}\,H_1\bigl(\VR_t(S,d_\theta);\Z\bigr)
=
(m-1)(n-1),
\]
so $\VR_t(S,d_\theta)\simeq \bigvee^{(m-1)(n-1)} S^1$.
\end{corollary}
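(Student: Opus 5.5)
We apply Corollary~\ref{prop:rips_exact_pinf} to $S=S_{\mathcal C}^{\le t}\cup S_{\mathcal Y}^{\le t}$ with $p=\infty$. The first step identifies the three pieces of the decomposition. By hypothesis, every pair of distinct points of $S_{\mathcal C}$ is at $\beta$-distance $>t$, so $\VR_t(S_{\mathcal C},\beta)$ is the discrete complex on $x_1,\dots,x_m$. Likewise $\VR_t(S_{\mathcal Y},d_{\mathrm{reg}})$ is discrete on $y_1,\dots,y_n$. Since $S_{\mathcal C}=S_{\mathcal C}^{\le t}$ and $S_{\mathcal Y}=S_{\mathcal Y}^{\le t}$, the mixed join term consists of all $\sigma\cup\tau$ with $\sigma$ a nonempty face of the discrete complex on the $x_i$ and $\tau$ a nonempty face of the discrete complex on the $y_j$. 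Since only singletons are nonempty faces there, these are exactly the edges $\{x_i,y_j\}$, and all $mn$ of them occur. The union of the three pieces is therefore the graph $K_{m,n}$, with no simplices of dimension $\ge2$. This is the same argument as in Corollary~\ref{ex:universal_loop}: any $2$-simplex would contain two vertices from the same side, which are not joined by an edge.

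The second step is the topology of a finite graph. For $m,n\ge1$, $K_{m,n}$ is connected, since any $x_i$ and $y_j$ are adjacent and any two vertices on the same side share a neighbour on the other side. A connected graph $G$ has $H_1(G;\Z)$ free of rank $|E|-|V|+1$, because its Euler characteristic is $\chi=|V|-|E|=\operatorname{rank}H_0-\operatorname{rank}H_1$ and $H_0\cong\Z$. Here this gives rank $mn-(m+n)+1=(m-1)(n-1)$.

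For the homotopy type, choose a spanning tree $T$ (for instance the star formed by the edges at $x_1$ together with the edges at $y_1$). Collapsing $T$, which is contractible, is a homotopy equivalence and leaves a wedge of circles, one for each edge outside $T$. There are $mn-(m+n-1)=(m-1)(n-1)$ such edges.

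No step here is a genuine obstacle. The only point requiring care is confirming that the mixed join contributes no higher simplices, and this follows from the within-side faces being only singletons. The degenerate cases $m=1$ or $n=1$ give a star graph, which is a tree, consistent with rank $0$ and a contractible complex.
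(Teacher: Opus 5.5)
Your proposal is correct and follows essentially the same route as the paper: use the $p=\infty$ decomposition (the same argument as Corollary~\ref{ex:universal_loop}) to see that only the $mn$ mixed edges $\{x_i,y_j\}$ occur and there are no higher simplices, then compute $\mathrm{rank}\,H_1=E-V+1=(m-1)(n-1)$ for the connected graph $K_{m,n}$. Your spanning-tree collapse additionally justifies the wedge-of-circles homotopy type, which the paper asserts without a separate argument.
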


\begin{proof}
Exactly as in Corollary~\ref{ex:universal_loop}, all edges are mixed edges and there are no higher simplices.
Thus $\VR_t(S,d_\theta)$ is the graph $K_{m,n}$.
For a connected finite graph, $H_1$ has rank $E-V+1$.
Here $E=mn$ and $V=m+n$, so $\mathrm{rank}\,H_1=mn-(m+n)+1=(m-1)(n-1)$.
\end{proof}

\begin{remark}
Corollaries~\ref{ex:universal_loop}--\ref{ex:Km n} show a qualitative difference between BK Rips complexes and the
Rips complex of a single ``side'':
even if the CP-side Rips complex and the non-CP-side Rips complex are both totally disconnected at scale $t$,
the BK join term can create loops (and in general higher-dimensional homology, if within-side simplices occur).
\end{remark}
\subsubsection{A finite-dimensional CP-side computation: $A=\C^n$, $H=\C$}

We now compute the CP-side Bures metric explicitly in a standard finite-dimensional commutative case,
which then lets one compute the BK Rips complex for point clouds whose CP-vertices lie in this cone.

Let $A=\C^n$ with pointwise operations and involution, and let $H=\C$.
Every linear map $\varphi:A\to\C$ has the form $\varphi(a_1,\dots,a_n)=\sum_{k=1}^n z_k a_k$
for a unique $z=(z_1,\dots,z_n)\in\C^n.$
Such a $\varphi$ is positive (hence CP) iff $z_k\in[0,\infty)$ for all $k$.
Thus $\mathcal C=\CP(\C^n,\C)\ \cong\ [0,\infty)^n.$
Under this identification, the Bures distance on $\mathcal C$ is
\begin{equation}\label{eq:hellinger}
\beta(z,w)=\norm{\sqrt{z}-\sqrt{w}}_2,
\qquad z,w\in[0,\infty)^n,
\end{equation}
where $\sqrt{z}:=(\sqrt{z_1},\dots,\sqrt{z_n})$ and $\norm{\cdot}_2$ denotes the Euclidean norm on $\R^n$.

\begin{example}[Explicit CP-side Rips computation in dimension $2$]\label{ex:cp_rips_dim2}
Let $n=2$ and consider three CP points $z^{(1)}=(1,0),$ $z^{(2)}=(0,1),$ $z^{(3)}=(1,1)\in[0,\infty)^2.$
By \eqref{eq:hellinger},
\[
\beta(z^{(1)},z^{(2)})=\sqrt{2},\qquad
\beta(z^{(1)},z^{(3)})=1,\qquad
\beta(z^{(2)},z^{(3)})=1.
\]
Hence the CP-side Vietoris--Rips complexes $\VR_t(\{z^{(1)},z^{(2)},z^{(3)}\},\beta)$ are:
\begin{enumerate}
\item[(i)] for $0\le t<1$: three isolated vertices;
\item[(ii)] for $1\le t<\sqrt{2}$: a ``$V$''-shaped tree with edges $\{z^{(1)},z^{(3)}\}$ and $\{z^{(2)},z^{(3)}\}$;
\item[(iii)] for $t\ge\sqrt{2}$: the full $2$-simplex on three vertices.
\end{enumerate}
In particular, the CP-side Rips homology has no $H_1$ at any scale.
\end{example}

\begin{remark}[Adding non-CP vertices produces BK loops at intermediate scales]
Take the CP point cloud of Example~\ref{ex:cp_rips_dim2} and add two non-CP vertices $y_1,y_2\in\mathcal X\setminus\mathcal C$
with $d_{\mathrm{reg}}(y_1,y_2)>t$ but $r_{\mathcal Y}(y_1),r_{\mathcal Y}(y_2)\le t$.
Choose an anchor $\theta$ so that $r_{\mathcal C}(z^{(1)}),r_{\mathcal C}(z^{(2)})\le t$ but $r_{\mathcal C}(z^{(3)})>t$, and suppose $\beta(z^{(1)}, z^{(2)})>t$.
Then Corollary~\ref{prop:rips_exact_pinf} shows that at scale $t$ the join term contributes a copy of $K_{2,2}$,
hence creates an $H_1$-class (as in Corollary~\ref{ex:universal_loop}), even though the CP-side alone has no loops.
\end{remark}
\begin{remark}
For general $p<\infty$ the only difference from the max-glue case is the cross-distance rule:
\[
d_\theta(x,y)=\bigl(\beta(x,\theta)^p+d_{\mathrm{reg}}(y,\ast)^p\bigr)^{1/p},
\qquad x\in\mathcal C,\ y\in\mathcal Y,
\]
so a mixed edge at scale $t$ occurs iff $(r_{\mathcal C}(x))^p+(r_{\mathcal Y}(y))^p\le t^p$.
Accordingly, in the Rips mixed-simplex criterion, the join term is no longer based on simple sublevel sets
$r_{\mathcal C}\le t$ and $r_{\mathcal Y}\le t$, but on the constraint
\[
\max_{x\in\sigma} r_{\mathcal C}(x)^p\ +\ \max_{y\in\tau} r_{\mathcal Y}(y)^p\ \le\ t^p.
\]
Nevertheless, for a finite point cloud this is still fully computable from the two lists of radii.
\end{remark}

\subsubsection{A finite-dimensional computation on a mixed cloud: $A=M_n$, $H=\C^m$}

Let $\tau_n$ be the normalized trace state on $M_n$.
Define the (unital) completely depolarizing CP map
\begin{equation}\label{eq:depolarizing}
\Theta(a):=\tau_n(a)\,I_m,\qquad a\in M_n.
\end{equation}
Then $\Theta\in\CP(M_n,M_m)$ and $\norm{\Theta}_{\cb}=1$. For $c\ge 0$ define $\Theta_c:=c\,\Theta \in \CP(M_n,M_m)$.

\begin{lemma}
\label{lem:bures-ray}
$\beta(\Theta_c,\Theta_d)=\bigl|\sqrt{c}-\sqrt{d}\bigr|,$ for all $c,d\ge 0$,
\end{lemma}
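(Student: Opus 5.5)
The plan is to prove the two inequalities $\beta(\Theta_c,\Theta_d)\le|\sqrt c-\sqrt d|$ and $\beta(\Theta_c,\Theta_d)\ge|\sqrt c-\sqrt d|$ separately. In both, the key fact is that any Stinespring implementer $V$ of $\Theta_c$ satisfies $V^*V=\Theta_c(1)=cI_m$. This is the same mechanism as in the derivation of \eqref{eq:1-dim}.

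\emph{Upper bound.} Fix one minimal Stinespring representation $\Theta(a)=V_0^*\pi(a)V_0$ of $\Theta$, with $\pi:M_n\to B(K)$ and $V_0$ an isometry, since $\Theta$ is unital. Concretely, one may take $K=\C^n\otimes\C^n\otimes\C^m$, $\pi(a)=a\otimes I\otimes I_m$, and $V_0h=\Omega\otimes h$, where $\Omega=n^{-1/2}\sum_k e_k\otimes e_k$ is the maximally entangled unit vector, so that $\langle\Omega,(a\otimes I)\Omega\rangle=\tau_n(a)$. Then $\sqrt c\,V_0\in\mathscr S_\pi(\Theta_c)$ and $\sqrt d\,V_0\in\mathscr S_\pi(\Theta_d)$. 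Definition~\ref{def:bures} gives
\[
\beta(\Theta_c,\Theta_d)\le\beta_\pi(\Theta_c,\Theta_d)\le\|(\sqrt c-\sqrt d)V_0\|=|\sqrt c-\sqrt d|.
\]

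\emph{Lower bound.} Let $\pi$ be any representation with $V\in\mathscr S_\pi(\Theta_c)$ and $W\in\mathscr S_\pi(\Theta_d)$. Then $V^*V=\Theta_c(1)=cI_m$ and $W^*W=dI_m$. For a unit vector $h\in H$ this gives $\|Vh\|=\sqrt c$ and $\|Wh\|=\sqrt d$, and the reverse triangle inequality yields
\[
\|V-W\|\ge\|(V-W)h\|\ge\bigl|\|Vh\|-\|Wh\|\bigr|=|\sqrt c-\sqrt d|.
\]
Taking the infimum over $V$, $W$ and $\pi$ gives $\beta\ge|\sqrt c-\sqrt d|$. Alternatively, one could invoke the triangle inequality for $\beta$ together with $\beta(\phi,0)=\|\phi(1)\|^{1/2}$, but the direct argument is cleaner.

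Neither step is a real obstacle. The only points needing care are checking the Stinespring identity $V_0^*\pi(a)V_0=\tau_n(a)I_m$ for the explicit dilation, and the degenerate cases $c=0$ or $d=0$. In those cases $V=0$ is an admissible implementer, and the argument goes through unchanged.
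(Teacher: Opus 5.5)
Your proposal is correct and matches the paper's proof: the same dilation $K=\C^n\otimes\C^n\otimes\C^m$, $\pi(a)=a\otimes I_n\otimes I_m$, $V_0h=\Omega\otimes h$ gives the upper bound. The lower bound in both comes from $\|V\|=\sqrt c$ and $\|W\|=\sqrt d$ for any implementers plus the reverse triangle inequality; evaluating at a unit vector $h$ instead of using operator norms is only a cosmetic difference.
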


\begin{proof}
Let $\Omega:=\frac1{\sqrt n}\sum_{j=1}^n e_j\otimes e_j\in \C^n\otimes\C^n$, so that
$\tau_n(a)=\langle \Omega,(a\otimes I_n)\Omega\rangle$.
Set $K:=\C^n\otimes\C^n\otimes \C^m$ and $\pi(a):=a\otimes I_n\otimes I_m$.
Define $V:\C^m\to K$ by $Vh:=\Omega\otimes h$. Then $V^*\pi(a)V=\tau_n(a)I_m=\Theta(a)$.
Thus $\sqrt c\,V$ implements $\Theta_c$ and $\sqrt d\,V$ implements $\Theta_d$ in the same representation $\pi$. Hence $
\beta(\Theta_c,\Theta_d)\le \norm{\sqrt c\,V-\sqrt d\,V}
=|\sqrt c-\sqrt d|\,\norm{V}
=|\sqrt c-\sqrt d|.$

For the reverse inequality, let $(\pi',W_c)$ implement $\Theta_c$.
Then $\norm{W_c}^2=\norm{\Theta_c(I_n)}=\norm{cI_m}=c$, so $\norm{W_c}=\sqrt c$; similarly any implementer of $\Theta_d$
has norm $\sqrt d$. Therefore for any common representation, $\norm{W_c-W_d}\ \ge\ \bigl|\norm{W_c}-\norm{W_d}\bigr|=|\sqrt c-\sqrt d|.$

Taking infima yields $\beta(\Theta_c,\Theta_d)\ge |\sqrt c-\sqrt d|$.
\end{proof}

\begin{remark}
Thus the ray $\{\Theta_c:c\ge 0\}\subset \CP(M_n,M_m)$ is isometric to $[0,\infty)$
via the coordinate $c\mapsto \sqrt c$. This gives a fully explicit CP-side geometry for any $(n,m)$.
\end{remark}
\begin{example}\label{ex:go1}
Fix the three CP maps $x_0:=\Theta_0=0,$ $x_1:=\Theta_1=\Theta$ and $x_4:=\Theta_4=4\Theta$,
and set $S_{\mathcal C}:=\{x_0,x_1,x_4\}\subset \CP(M_n,M_m)$. Then \begin{equation}\label{eq:bures-matrix}
\beta(x_0,x_1)=1,\qquad \beta(x_1,x_4)=1,\qquad \beta(x_0,x_4)=2.
\end{equation} by Lemma~\ref{lem:bures-ray}. As a result, we have:

\begin{enumerate}
\item[(i)] If $0\le t<1$, then  $\VR_t(S_{\mathcal C},\beta)$ is three isolated vertices.
\item[(ii)] If $1\le t<2$, then  $\VR_t(S_{\mathcal C},\beta)$ is the path graph $x_0$--$x_1$--$x_4$ (a tree).
\item[(iii)] If $t\ge 2$, then  $\VR_t(S_{\mathcal C},\beta)$ is the full $2$-simplex on $\{x_0,x_1,x_4\}$ (contractible).
\end{enumerate}

Indeed, edges appear exactly when the corresponding pairwise distance is $\le t$.
By \eqref{eq:bures-matrix}, the two short edges appear at $t\ge 1$ and the long edge appears at $t\ge 2$.
When all three edges are present, the Rips complex is the clique complex of $K_3$, i.e.\ a filled triangle.
\end{example}

\usetikzlibrary{positioning}

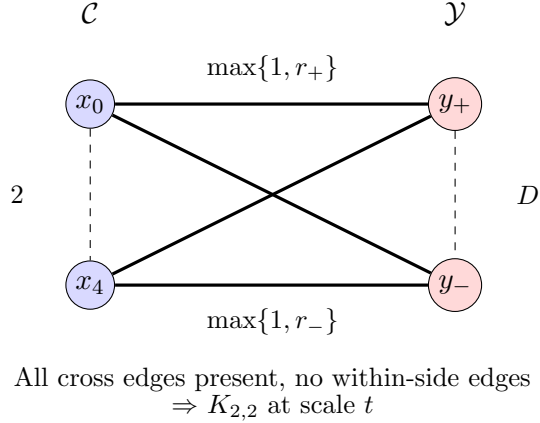
\begin{figure}[t]
\centering
\begin{tikzpicture}[scale=1.2,
    cp/.style={circle,draw,fill=blue!15,inner sep=2pt},
    ncp/.style={circle,draw,fill=red!15,inner sep=2pt}
]


\node[cp] (x0) at (0,1) {$x_0$};
\node[cp] (x4) at (0,-1) {$x_4$};

\node[ncp] (yp) at (4,1) {$y_+$};
\node[ncp] (ym) at (4,-1) {$y_-$};

\node[draw=none] at (0,2) {$\mathcal C$};
\node[draw=none] at (4,2) {$\mathcal Y$};


\draw[very thick] (x0) -- (yp);
\draw[very thick] (x0) -- (ym);
\draw[very thick] (x4) -- (yp);
\draw[very thick] (x4) -- (ym);

\draw[dashed] (x0) -- (x4);
\draw[dashed] (yp) -- (ym);


\node at (2,1.4) {\small $\max\{1,r_+\}$};
\node at (2,-1.4) {\small $\max\{1,r_-\}$};

\node at (-0.8,0) {\small $2$};
\node at (4.8,0) {\small $D$};


\node[draw=none,align=center] at (2,-2.2)
{\small All cross edges present, no within-side edges\\[-1mm]
 \small $\Rightarrow$ $K_{2,2}$ at scale $t$};

\end{tikzpicture}

\caption{The mixed point cloud $S=\{x_0,x_4,y_+,y_-\}$ from Example~\ref{ex:go2}. 
The CP vertices $x_0,x_4$ are at Bures distance $2$, while the non-CP vertices $y_+,y_-$ are at $d_{\mathrm{reg}}$-distance $D$. 
Each cross distance is $\max\{1,r_\pm\}$. 
Under the condition $\max\{1,r_+,r_-\}\le t<\min\{2,D\}$, all cross edges are present and no within-side edges occur. 
Thus the Vietoris--Rips complex $\VR_t(S,d_\theta)$ has 1-skeleton $K_{2,2}$ and is homotopy equivalent to $S^1$ (Theorem~\ref{thm:bk-rips-loop}).}
\label{fig:bk-mixed-cloud}

\end{figure}

We now illustrate the BK wedge effect on a finite cloud in $\CB(M_n,M_m)$.

\begin{example}\label{ex:go2}
Define the completely bounded maps $y_{+}:=i\Theta,$ $y_{-}:=-i\Theta.$ Then  $\norm{y_{\pm}}_{\cb}=1$, but these are not completely positive.
We define the (unknown but finite) parameters
\begin{equation}\label{eq:params}
r_+:=d_{\mathrm{reg}}(y_+,\ast),\quad r_{-}:=d_{\mathrm{reg}}(y_-,\ast),
\quad
D:=d_{\mathrm{reg}}(y_+,y_-)=\lambda\,\delta_{\mathrm{reg}}(y_+,y_-)^\alpha.
\end{equation}
They satisfy $0<r_{\pm}<\infty$, and $0\le D\le r_++r_{-}\le 2\max\{r_+,r_{-}\}$ by the triangle inequality in $\mathcal Y$.
Also, $r_{\pm}\le \lambda\,\norm{y_+}_{\cb}^{\alpha/2}=\lambda$ by Lemma~\ref{lem:deltareg-basic}.

Fix the anchor $\theta:=x_1=\Theta\in\CP(M_n,M_m),$
and specialize to the max-glue BK metric 
$d_\theta:=\beta^{BK}_{\theta,\lambda,\infty,\alpha}.$ Let
\[
S:=\{x_0,x_4,y_+,y_-\}\subset \CB(M_n,M_m).
\] where  $x_0:=0,$ and $x_4:=4\Theta$. We deliberately exclude the anchor $\theta=x_1$, because including it tends to cone off Rips homology. With this notation as in place, the BK distances on $S$ are:
\begin{equation}\label{eq:bures-matrix2}
d_\theta(x_0,x_4)=2,\quad d_\theta(y_+,y_-)=D,\quad d_\theta(x_0,y_\pm)=\max\{1,r_{\pm}\},\quad d_\theta(x_4,y_\pm)=\max\{1,r_{\pm}\}.
\end{equation}
Indeed, the CP distances are Bures distances, computed in \eqref{eq:bures-matrix}.
The non-CP distance is $d_{\mathrm{reg}}(y_+,y_-)=D$ by definition.
The cross distances follow from $\beta(x_0,\theta)=1$ and 
$\beta(x_4,\theta)=1$.
\end{example}

We are now computing $\VR_t(S,d_\theta)$ exactly, in terms of $r$ and $D$.

\begin{theorem}
\label{thm:bk-rips-loop}
Assume $\max\{1,r_{-}, r_{+}\}\le t<\min\{2,D\}$.
Then the Vietoris--Rips complex $\VR_t(S,d_\theta)$ is the clique complex of the complete bipartite graph $K_{2,2}$,
hence is homotopy equivalent to $S^1$ and has $\mathrm{rank}\,H_1=1$.
\end{theorem}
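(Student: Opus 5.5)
The plan is to read off the full $1$-skeleton of $\VR_t(S,d_\theta)$ from the distance table \eqref{eq:bures-matrix2} of Example~\ref{ex:go2}, and then apply the clique/graph argument of Corollary~\ref{ex:universal_loop}. Alternatively, we can reduce directly to that corollary via Corollary~\ref{prop:rips_exact_pinf}, since $p=\infty$.

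\emph{Step one: the within-side pairs.} We have $d_\theta(x_0,x_4)=2>t$, because $t<2$. Likewise $d_\theta(y_+,y_-)=D>t$, because $t<D$. So $\{x_0,x_4\}$ and $\{y_+,y_-\}$ are not edges. In the language of Corollary~\ref{prop:rips_exact_pinf}, both $\VR_t(S_{\mathcal C},\beta)$ and $\VR_t(S_{\mathcal Y},d_{\mathrm{reg}})$ consist of two isolated vertices.

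\emph{Step two: the cross pairs.} The radii are $r_{\mathcal C}(x_0)=\beta(0,\Theta)=1$ and $r_{\mathcal C}(x_4)=\beta(4\Theta,\Theta)=1$, by Lemma~\ref{lem:bures-ray}. The non-CP radii are $r_{\mathcal Y}(y_\pm)=r_\pm$. All four are $\le t$ by the hypothesis $\max\{1,r_+,r_-\}\le t$. Hence $S_{\mathcal C}^{\le t}=\{x_0,x_4\}$, $S_{\mathcal Y}^{\le t}=\{y_+,y_-\}$, and $S$ is the union of these two sets. Equivalently, by \eqref{eq:bures-matrix2}, every cross distance $\max\{1,r_\pm\}$ is $\le t$, so all four cross edges $\{x_i,y_\pm\}$ are present.

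\emph{Step three: identify the complex.} The hypotheses (1)–(3) of Corollary~\ref{ex:universal_loop} now hold verbatim, with $x_1,x_2$ replaced by $x_0,x_4$ and $y_1,y_2$ replaced by $y_+,y_-$. That corollary gives that $\VR_t(S,d_\theta)$ is the clique complex of $K_{2,2}$. Directly: a Rips simplex of dimension $\ge2$ would contain three vertices, and two of them lie on the same side. That pair is not an edge, so there are no $2$-simplices. Therefore the complex is the $4$-cycle $x_0$–$y_+$–$x_4$–$y_-$–$x_0$. It is connected with $E-V+1=4-4+1=1$, so it is homotopy equivalent to $S^1$ and $\mathrm{rank}\,H_1=1$.

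\emph{Main obstacle.} Essentially nothing is difficult here. The only point needing care is the correctness of the cross-distance entries $\max\{1,r_\pm\}$. These come from the wedge formula \eqref{eq:cross-distance} with $p=\infty$, together with $\beta(x_0,\theta)=\beta(x_4,\theta)=1$ from Lemma~\ref{lem:bures-ray} applied with $c\in\{0,4\}$ and $d=1$. One should also note that the hypothesis interval is assumed nonempty. In particular it forces $D>1$; the theorem is conditional on this, and we do not need to verify it.
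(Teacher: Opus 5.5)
Your proposal is correct and follows essentially the same route as the paper. You read off from \eqref{eq:bures-matrix2} that all four cross distances $\max\{1,r_\pm\}$ are $\le t$ while the within-side distances $2$ and $D$ exceed $t$, so the $1$-skeleton is $K_{2,2}$ and the flag property leaves only the $4$-cycle; your reduction to Corollary~\ref{ex:universal_loop} and the pigeonhole argument excluding $2$-simplices are slightly more explicit versions of the same reasoning.
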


\begin{proof}
By \eqref{eq:bures-matrix2} the cross distances satisfy
$d_\theta(x_0,y_\pm)=d_\theta(x_4,y_\pm)\le t$,
so all four mixed edges $\{x_i,y_\pm\}$ are present.
The within-side distances satisfy $d_\theta(x_0,x_4)=2>t$ and $d_\theta(y_+,y_-)=D>t$,
so there are no edges inside $\{x_0,x_4\}$ and none inside $\{y_+,y_-\}$.
Thus the 1-skeleton is exactly $K_{2,2}$, and since $\VR_t$ is a flag complex, it is the clique complex of $K_{2,2}$,
which is a $4$-cycle and homotopy equivalent to $S^1$.
\end{proof}

\begin{remark}
The condition $\max\{1,r_{\pm}\}\le t<\min\{2,D\}$ is exactly the condition that all mixed edges exist, controlled only by the radii to the anchor/basepoint,
but no within-side edges exist.
This is precisely the wedge-geometry mechanism behind the appearance of loops in BK Rips complexes.
\end{remark}

\subsection{\v{C}ech complexes}

Let $(Z,d)$ be a metric space and let $S\subseteq Z$.
In the literature, the \v{C}ech complex is most commonly understood as the nerve of a cover by metric balls in an ambient space, typically with $Z\subseteq \mathbb{R}^n$, where intersections are taken in the ambient space; see \cite{EdelsbrunnerHarer, Oudot, deSilvaGhrist}.
In contrast, for a general metric space $(Z,d)$ one may also consider an intrinsic \v{C}ech complex, where balls are taken in $Z$ itself and intersections are required to lie in $Z$.
To our knowledge, this distinction is not systematically emphasized in the literature, but it plays a crucial role in wedge-type metric spaces such as the BK construction.

In the BK wedge setting, this distinction is essential: ambient \v{C}ech complexes can detect the glued basepoint and become contractible at relatively small scales, while intrinsic \v{C}ech complexes retain the combinatorial geometry of the cloud.
We now make these two constructions precise.

\begin{definition}
Let $(Z,d)$ be a metric space and $t\ge 0$.

(i) The intrinsic \v{C}ech complex $\check{C}_t(Z,d)$ is the nerve of the cover of $Z$ by
(closed) metric balls $\{ \overline{B}_Z(z,t)\}_{z\in Z}$.
Equivalently, a finite subset $\sigma\subseteq Z$ is a simplex if and only if $\bigcap_{z\in\sigma} \overline{B}_Z(z,t)\neq\varnothing.$

(ii) The ambient \v{C}ech complex of a cloud $S$ in $Z$ is the abstract simplicial complex $\check{C}^{\mathrm{amb}}_t(S\subseteq Z,d)$
with vertex set $S$ such that a finite subset $\sigma\subseteq S$ is a simplex if and only if $\bigcap_{s\in\sigma} \overline{B}_Z(s,t)\neq\varnothing.$

When the ambient space is clear, we simply write $\check{C}^{\mathrm{amb}}_t(S,d)$.
\end{definition}

\begin{conventions} (i) (convention for finite clouds.)
We reserve $\check{C}_t(Z,d)$ for the intrinsic \v{C}ech complex of the metric space $Z$ itself.
For a finite cloud $S\subseteq Z$, the ambient complex
$\check{C}^{\mathrm{amb}}_t(S\subseteq Z,d)$ allows witnesses anywhere in $Z$,
whereas the intrinsic complex $\check{C}_t(S,d|_S)$ allows only witnesses in $S$.
Hence
\[
\check{C}_t(S,d|_S)\ \subseteq\ \check{C}^{\mathrm{amb}}_t(S\subseteq Z,d).
\]
All basepoint-driven contractibility statements for finite BK clouds below are ambient statements.

(ii) (radius-$t$ convention.) Some authors use the ``$t/2$'' convention (balls of radius $t/2$) so that $\check{C}_t$
approximates the union of $t/2$-balls.
All statements below use the radius-$t$ convention.

For every metric space $(Z,d)$ and every cloud $S\subseteq Z$, one has
\[
\VR_t(S,d|_S)\ \subseteq\ \check{C}^{\mathrm{amb}}_t(S\subseteq Z,d)\ \subseteq\ \VR_{2t}(S,d|_S).
\]
Indeed, the usual proof of $\VR_t\subseteq \check{C}_t\subseteq \VR_{2t}$ works verbatim in the
ambient setting. When $S=Z$, this reduces to the intrinsic inclusion
\[
\VR_t(Z,d)\ \subseteq\ \check{C}_t(Z,d)\ \subseteq\ \VR_{2t}(Z,d).
\]
\end{conventions}
Metric balls in the wedge admit an explicit componentwise description.

\begin{lemma}
\label{prop:ball-description}
Fix $t\ge 0$.

\smallskip\noindent
\textup{(i)} If $x\in\mathcal C$, then $\overline{B}_{\mathcal X}(x,t)\cap\mathcal C
=
\overline{B}_{\mathcal C}(x,t),$
and $\overline{B}_{\mathcal X}(x,t)\cap\mathcal Y
=
\left\{\,y\in\mathcal Y:\ \bigl\|\bigl(r_{\mathcal C}(x),r_{\mathcal Y}(y)\bigr)\bigr\|_{\ell^p}\le t\,\right\}.$

\smallskip\noindent
\textup{(ii)} If $y\in\mathcal Y$, then $\overline{B}_{\mathcal X}(y,t)\cap\mathcal Y
=
\overline{B}_{\mathcal Y}(y,t),$
and
$\overline{B}_{\mathcal X}(y,t)\cap\mathcal C
=
\left\{\,x\in\mathcal C:\ \bigl\|\bigl(r_{\mathcal C}(x),r_{\mathcal Y}(y)\bigr)\bigr\|_{\ell^p}\le t\,\right\}.$
\end{lemma}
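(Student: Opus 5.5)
The plan is to read off both parts of the lemma from the $\ell^p$ wedge decomposition in Theorem~\ref{thm:BK-lp-wedge}, working throughout in the identification $(\mathcal X,d_\theta)\cong(\mathcal C,\beta,\theta)\vee_p(\mathcal Y,d_{\mathrm{reg}},\ast)$ under which $\theta$ and $\ast$ are the same glued point. Each ball $\overline{B}_{\mathcal X}(z,t)=\{w:\ d_\theta(z,w)\le t\}$ will be split according to whether $w$ lies in $\mathcal C$ or in $\mathcal Y$. In each piece we substitute the matching case of the wedge metric from Definition~\ref{def:lp-wedge}, namely $d_\theta|_{\mathcal C}=\beta$, $d_\theta|_{\mathcal Y}=d_{\mathrm{reg}}$, and the cross formula \eqref{eq:cross-distance}.

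For (i), fix $x\in\mathcal C$. If $w\in\mathcal C$, then $d_\theta(x,w)=\beta(x,w)$, so the $\mathcal C$-part of the ball is exactly $\overline{B}_{\mathcal C}(x,t)$. If $w=y\in\mathcal Y$, then \eqref{eq:cross-distance} gives $d_\theta(x,y)=\|(r_{\mathcal C}(x),r_{\mathcal Y}(y))\|_{\ell^p}$, and the condition $d_\theta(x,y)\le t$ is precisely the displayed one. Part (ii) is the mirror argument with the roles of the two components swapped, using the symmetric cross case of the wedge metric.

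The only step requiring care is the glued point. It belongs to both $\mathcal C$ (as $\theta$) and $\mathcal Y$ (as $\ast$), so we must check that the two descriptions agree there. Take $x\in\mathcal C$ and the point $y=\ast$. Since $r_{\mathcal Y}(\ast)=0$, the cross formula gives $\|(r_{\mathcal C}(x),0)\|_{\ell^p}=\beta(x,\theta)$. The same value arises from $d_\theta(x,\theta)=\beta(x,\theta)$ computed inside $\mathcal C$. The symmetric check for (ii) uses $r_{\mathcal C}(\theta)=0$, which gives $d_{\mathrm{reg}}(y,\ast)$. Hence the two set descriptions are consistent on the overlap $\{\theta\}=\{\ast\}$. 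This is also exactly what makes $d_{\vee,p}$ well defined, and it was already implicitly verified in Lemma~\ref{lem:lp-wedge-metric} and Theorem~\ref{thm:BK-lp-wedge}.

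I expect no genuine obstacle. The one point to state explicitly is this basepoint bookkeeping: in Lemma~\ref{prop:ball-description}, $\mathcal Y$ denotes the full pointed space including $\ast\equiv\theta$, not merely $\mathcal Y^\circ$.
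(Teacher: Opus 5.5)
Your proposal is correct and follows the paper's argument. The paper's proof likewise splits each ball by component, using that $d_\theta$ restricts to $\beta$ on $\mathcal C$ and to $d_{\mathrm{reg}}$ on $\mathcal Y$, and reads the cross part directly from \eqref{eq:cross-distance}. Your extra check that the two descriptions agree at the glued point $\theta\sim\ast$ is bookkeeping the paper leaves implicit.
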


\begin{proof}
Inside each component, the metric agrees with the component metric.
Across components, the distance is given by \eqref{eq:cross-distance}, so the cross-description follows directly.
\end{proof}

Mixed \v{C}ech simplices can be characterized in terms of constrained ball intersections within a single component.

\begin{proposition}
\label{prop:cech-mixed-criterion}
Fix $t\ge 0$.
Let $\sigma=\{x_1,\dots,x_m\}\subseteq\mathcal C$ and $\tau=\{y_1,\dots,y_n\}\subseteq\mathcal Y^\circ$
be finite subsets, and set \[A:=\max_{1\le i\le m} r_{\mathcal C}(x_i),\qquad
B:=\max_{1\le j\le n} r_{\mathcal Y}(y_j).\]
Then $\sigma\cup\tau$ is a simplex of $\check{C}_t(\mathcal X,d_\theta)$ if and only if at least one of the
following two conditions holds:

\begin{enumerate}
\item[(C)] There exists $z\in\mathcal C$ such that
$z\in \bigcap_{i=1}^m \overline{B}_{\mathcal C}(x_i,t)$ and 
$\bigl\|\bigl(r_{\mathcal C}(z),B\bigr)\bigr\|_{\ell^p}\le t.$

\item[(Y)] There exists $w\in\mathcal Y$ such that $w\in \bigcap_{j=1}^n \overline{B}_{\mathcal Y}(y_j,t)$ and 
$\bigl\|\bigl(A,r_{\mathcal Y}(w)\bigr)\bigr\|_{\ell^p}\le t.$
\end{enumerate}
\end{proposition}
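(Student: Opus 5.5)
The plan is to unpack the intrinsic \v{C}ech condition and split according to which wedge component contains the witness. By definition, $\sigma\cup\tau$ is a simplex of $\check{C}_t(\mathcal X,d_\theta)$ exactly when some point $q$ of $\mathcal X\cong\mathcal C\vee_p\mathcal Y$ satisfies $d_\theta(q,x_i)\le t$ for all $i$ and $d_\theta(q,y_j)\le t$ for all $j$. Under the wedge identification of Theorem~\ref{thm:BK-lp-wedge}, every point lies in $\mathcal C$ or in $\mathcal Y^\circ$. The glued point $\theta=\ast$ belongs to both, and it will be handled by (C) with $z=\theta$ and also by (Y) with $w=\ast$. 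So the statement reduces to characterizing the witnesses $q\in\mathcal C$ and the witnesses $q\in\mathcal Y$ separately, and I will prove that these two cases are exactly (C) and (Y).

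For a witness $z\in\mathcal C$, part (i) of Lemma~\ref{prop:ball-description} gives $d_\theta(z,x_i)=\beta(z,x_i)$. The conditions $d_\theta(z,x_i)\le t$ therefore say precisely that $z\in\bigcap_i\overline{B}_{\mathcal C}(x_i,t)$. For the non-CP vertices, \eqref{eq:cross-distance} gives $d_\theta(z,y_j)=\|(r_{\mathcal C}(z),r_{\mathcal Y}(y_j))\|_{\ell^p}$. Because the $\ell^p$ norm is monotone in each coordinate on $[0,\infty)^2$, the family of inequalities over all $j$ is equivalent to the single inequality at the maximizing index, namely $\|(r_{\mathcal C}(z),B)\|_{\ell^p}\le t$. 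This is the same argument used in Proposition~\ref{prop:rips-mixed}: necessity comes from the index attaining $B$, and sufficiency from monotonicity. This yields (C). The case $w\in\mathcal Y$ is symmetric, using part (ii) of Lemma~\ref{prop:ball-description} with $A$ in place of $B$, and it yields (Y).

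To finish, I check both directions. If a witness exists, it lies in $\mathcal C$ or in $\mathcal Y$, so (C) or (Y) holds. Conversely, the point $z$ from (C) or $w$ from (Y) is itself a witness.

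I expect no real obstacle; the only care needed is the bookkeeping at the basepoint. For $w=\ast$, the ball $\overline{B}_{\mathcal Y}(y_j,t)$ should be read inside $\mathcal Y$, so that $\ast$ is allowed there. The cross formula also remains valid there, since $r_{\mathcal Y}(\ast)=0$ and $d_\theta(\theta,x_i)=\beta(\theta,x_i)=\|(r_{\mathcal C}(x_i),0)\|_{\ell^p}$. This consistency check confirms that neither case misses the wedge point.
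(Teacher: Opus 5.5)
Your proof is correct and follows essentially the same route as the paper. You split according to whether the common witness lies in $\mathcal C$ or in $\mathcal Y$, use the componentwise ball description together with the cross-distance formula, and reduce the family of cross inequalities to the single one at the maximal radius via coordinatewise monotonicity of the $\ell^p$ norm. Your explicit check that the glued point $\theta\sim\ast$ is consistently covered by both (C) and (Y) is bookkeeping the paper leaves implicit.
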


\begin{proof}
By definition, $\sigma\cup\tau$ is a \v{C}ech simplex iff $\bigcap_{u\in \sigma\cup\tau}\overline{B}_{\mathcal X}(u,t)\neq\varnothing.$ Any point of the wedge $\mathcal X\cong \mathcal C\vee_p\mathcal Y$ is represented either by a point of
$\mathcal C$ or by a point of $\mathcal Y$.
Hence the intersection is nonempty iff it contains a point $z\in\mathcal C$ or a point $w\in\mathcal Y$.

Assume first that there is $z\in\mathcal C$ in the intersection.
Then $z\in\overline{B}_{\mathcal X}(x_i,t)\cap\mathcal C=\overline{B}_{\mathcal C}(x_i,t)$ for all $i$,
and also $d_\theta(z,y_j)\le t$ for all $j$.
Using $d_\theta(z,y_j)=\bigl\|\bigl(r_{\mathcal C}(z),r_{\mathcal Y}(y_j)\bigr)\bigr\|_{\ell^p}$ and monotonicity in the second coordinate, the inequalities $d_\theta(z,y_j)\le t$ for all $j$
are equivalent to $\bigl\|\bigl(r_{\mathcal C}(z),B\bigr)\bigr\|_{\ell^p}\le t.$ This is exactly condition \textup{(C)}.

Conversely, if \textup{(C)} holds, then for each $j$ we have $r_{\mathcal Y}(y_j)\le B$, hence
\[
d_\theta(z,y_j)
=
\bigl\|\bigl(r_{\mathcal C}(z),r_{\mathcal Y}(y_j)\bigr)\bigr\|_{\ell^p}
\le
\bigl\|\bigl(r_{\mathcal C}(z),B\bigr)\bigr\|_{\ell^p}
\le t,
\]
and also $d_\theta(z,x_i)=\beta(z,x_i)\le t$ for all $i$.
Thus $z$ lies in all ambient balls.
The proof for \textup{(Y)} is symmetric, using Lemma~\ref{prop:ball-description} and monotonicity
in the first coordinate.
\end{proof}

The preceding criterion can be reformulated more explicitly using elementary properties of $\ell^p$ norms.

\begin{observation}
Let $1\le p<\infty$, and fix $t\ge 0$.
Then for $a,b\ge 0$, $\bigl\|\bigl(a,b\bigr)\bigr\|_{\ell^p}\le t$ if and only if 
$a\le t\ \text{ and }\ b\le (t^p-a^p)^{1/p}$
and equivalently $\bigl\|\bigl(a,b\bigr)\bigr\|_{\ell^p}\le t$
if and only if 
$b\le t\ \text{ and }\ a\le (t^p-b^p)^{1/p}.$

For $p=\infty$ one has
$\bigl\|\bigl(a,b\bigr)\bigr\|_{\ell^\infty}\le t$ if and only if
$a\le t\ \text{ and }\ b\le t.$

Using these equivalences, condition \textup{(C)} above can be rewritten as follows:

\smallskip\noindent
\textup{(i)} If $1\le p<\infty$, then \textup{(C)} is equivalent to
\[
B\le t
\quad\text{and}\quad
\left(\bigcap_{i=1}^m \overline{B}_{\mathcal C}(x_i,t)\right)
\cap
\overline{B}_{\mathcal C}\!\Bigl(\theta,(t^p-B^p)^{1/p}\Bigr)\neq\varnothing.
\]

\smallskip\noindent
\textup{(ii)} If $p=\infty$, then \textup{(C)} is equivalent to
\[
B\le t
\quad\text{and}\quad
\left(\bigcap_{i=1}^m \overline{B}_{\mathcal C}(x_i,t)\right)
\cap
\overline{B}_{\mathcal C}(\theta,t)\neq\varnothing.
\]

\smallskip\noindent
Similarly, condition \textup{(Y)} is equivalent to the following:

\smallskip\noindent
\textup{(iii)} If $1\le p<\infty$, then \textup{(Y)} is equivalent to
\[
A\le t
\quad\text{and}\quad
\left(\bigcap_{j=1}^n \overline{B}_{\mathcal Y}(y_j,t)\right)
\cap
\overline{B}_{\mathcal Y}\!\Bigl(\ast,(t^p-A^p)^{1/p}\Bigr)\neq\varnothing.
\]

\smallskip\noindent
\textup{(iv)} If $p=\infty$, then \textup{(Y)} is equivalent to
\[
A\le t
\quad\text{and}\quad
\left(\bigcap_{j=1}^n \overline{B}_{\mathcal Y}(y_j,t)\right)
\cap
\overline{B}_{\mathcal Y}(\ast,t)\neq\varnothing.
\]
\end{observation}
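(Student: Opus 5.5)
The plan is to reduce everything to the elementary scalar equivalences first, and then substitute them into conditions (C) and (Y) from Proposition~\ref{prop:cech-mixed-criterion}.

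\emph{Scalar equivalences.} Fix $a,b\ge 0$ and $1\le p<\infty$. Since $\|(a,b)\|_{\ell^p}\le t$ means $a^p+b^p\le t^p$, it forces $a^p\le t^p$, that is, $a\le t$. Given $a\le t$, the inequality $a^p+b^p\le t^p$ is equivalent to $b^p\le t^p-a^p$. Because $s\mapsto s^{1/p}$ is increasing on $[0,\infty)$ and both sides are nonnegative, this is equivalent to $b\le (t^p-a^p)^{1/p}$. The second form follows by interchanging the roles of $a$ and $b$. For $p=\infty$ the equivalence is just the definition of the max norm.

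\emph{Rewriting (C).} Apply the second form with $a=r_{\mathcal C}(z)$ and $b=B$. Then $\|(r_{\mathcal C}(z),B)\|_{\ell^p}\le t$ holds if and only if $B\le t$ and $r_{\mathcal C}(z)\le (t^p-B^p)^{1/p}$. Since $r_{\mathcal C}(z)=\beta(z,\theta)$, the second inequality says exactly that $z\in\overline{B}_{\mathcal C}(\theta,(t^p-B^p)^{1/p})$. The condition $B\le t$ does not depend on $z$, so it can be pulled outside the existential quantifier. Hence (C) becomes
\[
B\le t \quad\text{and}\quad \exists\, z\in\Bigl(\bigcap_{i}\overline{B}_{\mathcal C}(x_i,t)\Bigr)\cap\overline{B}_{\mathcal C}\bigl(\theta,(t^p-B^p)^{1/p}\bigr),
\]
which is (i). Case (ii) is the same argument using the $p=\infty$ equivalence, where the radius is simply $t$.

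\emph{Rewriting (Y).} Here use the first form with $a=A$ and $b=r_{\mathcal Y}(w)=d_{\mathrm{reg}}(w,\ast)$. This turns the constraint into $A\le t$ together with $w\in\overline{B}_{\mathcal Y}(\ast,(t^p-A^p)^{1/p})$, giving (iii); (iv) follows in the same way for $p=\infty$.

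I do not expect a genuine obstacle. The only points needing care are the following.
\begin{itemize}
\item The quantity $t^p-B^p$ (respectively $t^p-A^p$) must be nonnegative for the ball radius to make sense. This is exactly why the conditions $B\le t$ and $A\le t$ are stated first.
\item Moving the $z$-independent condition $B\le t$ (and likewise $A\le t$) outside the existential quantifier must be justified, as noted above.
\item In (Y), $w$ ranges over all of $\mathcal Y$, including $\ast$. This is consistent with the convention $r_{\mathcal Y}(\ast)=0$.
\end{itemize}
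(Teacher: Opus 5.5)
Your proposal is correct and is essentially the argument the paper intends. The paper gives no separate proof, only ``using these equivalences''; it simply substitutes the scalar $\ell^p$ equivalences into (C) and (Y), reads $r_{\mathcal C}(z)=\beta(z,\theta)$ and $r_{\mathcal Y}(w)=d_{\mathrm{reg}}(w,\ast)$ as ball membership, and pulls the $z$-independent (resp.\ $w$-independent) condition $B\le t$ (resp.\ $A\le t$) out of the existential quantifier, exactly as you do.
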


We now deduce a simple sufficient condition: vertices with sufficiently small radii automatically form mixed \v{C}ech simplices via the basepoint.

\begin{corollary}
\label{cor:basepoint-sufficient}
Fix $t\ge 0$ and let $\sigma\subseteq\mathcal C$ and $\tau\subseteq\mathcal Y^\circ$ be finite subsets.
If $\max_{x\in\sigma} r_{\mathcal C}(x)\le t$ and
$\max_{y\in\tau} r_{\mathcal Y}(y)\le t,$
then $\sigma\cup\tau$ is a simplex of $\check{C}_t(\mathcal X,d_\theta)$.

Consequently, if $S\subseteq\mathcal X$ is a finite cloud and $\sigma\cup\tau\subseteq S$,
then $\sigma\cup\tau$ is also a simplex of the ambient complex
$\check{C}^{\mathrm{amb}}_t(S\subseteq\mathcal X,d_\theta)$.
\end{corollary}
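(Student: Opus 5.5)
The plan is to exhibit one explicit witness point lying in every ball, namely the glued basepoint: the anchor $\theta\in\mathcal C$, which is identified with $\ast\in\mathcal Y$ in the wedge $\mathcal C\vee_p\mathcal Y$. By the definition of the intrinsic \v{C}ech complex, $\sigma\cup\tau$ is a simplex of $\check{C}_t(\mathcal X,d_\theta)$ as soon as
\[
\theta\in\bigcap_{u\in\sigma\cup\tau}\overline{B}_{\mathcal X}(u,t).
\]
So it suffices to bound $d_\theta(\theta,u)$ by $t$ for every vertex $u$. This is a direct specialization of Proposition~\ref{prop:cech-mixed-criterion}, but I would argue directly from the wedge formula to avoid fussing with the $\ell^p$ reformulations.

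First, take $x\in\sigma\subseteq\mathcal C$. Since $d_\theta|_{\mathcal C}=\beta$ (Theorem~\ref{thm:BK-lp-wedge}), we get $d_\theta(\theta,x)=\beta(\theta,x)=r_{\mathcal C}(x)\le t$ by hypothesis. Second, take $y\in\tau\subseteq\mathcal Y^\circ$. The cross-distance formula \eqref{eq:cross-distance} with the $\mathcal C$-point equal to $\theta$ gives
\[
d_\theta(\theta,y)=\bigl\|\bigl(r_{\mathcal C}(\theta),r_{\mathcal Y}(y)\bigr)\bigr\|_{\ell^p}=\bigl\|\bigl(0,r_{\mathcal Y}(y)\bigr)\bigr\|_{\ell^p}=r_{\mathcal Y}(y)\le t,
\]
for every $p\in[1,\infty]$. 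Hence $\theta$ lies in every closed ball, and the intersection is nonempty. Equivalently, condition (C) of Proposition~\ref{prop:cech-mixed-criterion} holds with $z=\theta$, since $\|(0,B)\|_{\ell^p}=B\le t$. If $\sigma$ or $\tau$ is empty, the same witness still works.

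For the ambient statement, note that the ambient complex $\check{C}^{\mathrm{amb}}_t(S\subseteq\mathcal X,d_\theta)$ accepts witnesses anywhere in $\mathcal X$, and its balls are the same sets $\overline{B}_{\mathcal X}(s,t)$. So $\theta$ is again a witness for $\sigma\cup\tau\subseteq S$, even if $\theta\notin S$.

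There is no real obstacle here. The only point needing care is that $\theta$ and $\ast$ represent the same point of the wedge, so that $r_{\mathcal C}(\theta)=0$ makes the cross distance collapse to $r_{\mathcal Y}(y)$ for all $p$, not only for $p=\infty$.
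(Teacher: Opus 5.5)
Your proposal is correct and follows essentially the same route as the paper. Both use the glued basepoint $\theta\sim\ast$ as a common witness, check $d_\theta(\theta,x)=r_{\mathcal C}(x)\le t$ and $d_\theta(\theta,y)=r_{\mathcal Y}(y)\le t$, and reuse the same witness for the ambient complex; your remark that $\|(0,r_{\mathcal Y}(y))\|_{\ell^p}=r_{\mathcal Y}(y)$ for every $p$ just spells out a step the paper leaves implicit.
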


\begin{proof}
Under the stated assumptions, the glued basepoint $\theta\sim\ast$ lies within distance $t$ of every vertex: $d_\theta(\theta,x)=r_{\mathcal C}(x)\le t$ ($x\in\sigma$),
and $d_\theta(\ast,y)=r_{\mathcal Y}(y)\le t$ ($y\in\tau$).
Hence the common intersection of the ambient balls is nonempty.
The final statement is immediate, since the same witness point $\theta\sim\ast\in\mathcal X$
works for the ambient complex of any cloud containing those vertices.
\end{proof}

\begin{remark}
For Vietoris--Rips complexes, mixed simplices are determined entirely by the maxima $(A,B)$ via
Proposition~\ref{prop:rips-mixed}.
For \v{C}ech, Proposition~\ref{prop:cech-mixed-criterion} shows that one must additionally understand
whether certain intersections of balls inside one component meet a suitable sublevel ball around the
glued basepoint.
Thus \v{C}ech complexes retain more geometric information about ball-intersection patterns inside
$(\mathcal C,\beta)$ and $(\mathcal Y,d_{\mathrm{reg}})$.
\end{remark}

\begin{figure}[t]
\centering
\begin{tikzpicture}[scale=0.95,
    every node/.style={scale=0.92},
    cp/.style={circle,draw,fill=blue!15,inner sep=1.7pt},
    ncp/.style={circle,draw,fill=red!15,inner sep=1.7pt},
    aux/.style={circle,draw,fill=gray!20,inner sep=1.5pt},
    lbl/.style={draw=none}
]

\node[lbl] at (2,2.45) {\small BK Rips at scale $t$};

\node[cp]  (x1L) at (0,1) {$x_1$};
\node[cp]  (x2L) at (0,-1) {$x_2$};
\node[ncp] (y1L) at (4,1) {$y_1$};
\node[ncp] (y2L) at (4,-1) {$y_2$};

\node[lbl] at (0,1.65) {\scriptsize $\mathcal C$};
\node[lbl] at (4,1.65) {\scriptsize $\mathcal Y$};

\draw[very thick] (x1L) -- (y1L);
\draw[very thick] (x1L) -- (y2L);
\draw[very thick] (x2L) -- (y1L);
\draw[very thick] (x2L) -- (y2L);

\draw[dashed] (x1L) -- (x2L);
\draw[dashed] (y1L) -- (y2L);

\node[lbl,align=center] at (2,-2.0)
{\scriptsize No within-side edges, so no triangles occur.\\
 \scriptsize Hence $\VR_t$ is the graph $K_{2,2}\simeq S^1$.};

\node[lbl] at (5.9,0) {$\Longrightarrow$};

\node[lbl] at (10.0,2.45) {\small Ambient BK \v{C}ech at the same scale};

\node[cp]  (x1R) at (7.8,1) {$x_1$};
\node[cp]  (x2R) at (7.8,-1) {$x_2$};
\node[ncp] (y1R) at (11.8,1) {$y_1$};
\node[ncp] (y2R) at (11.8,-1) {$y_2$};

\node[aux] (thR) at (9.8,0) {$\theta$};

\node[lbl] at (7.8,1.65) {\scriptsize $\mathcal C$};
\node[lbl] at (11.8,1.65) {\scriptsize $\mathcal Y$};

\draw[dotted] (thR) -- (x1R);
\draw[dotted] (thR) -- (x2R);
\draw[dotted] (thR) -- (y1R);
\draw[dotted] (thR) -- (y2R);

\fill[gray!12] (x1R.center) -- (y1R.center) -- (x2R.center) -- (y2R.center) -- cycle;

\draw[thick] (x1R) -- (y1R);
\draw[thick] (y1R) -- (x2R);
\draw[thick] (x2R) -- (y2R);
\draw[thick] (y2R) -- (x1R);

\draw[thick] (x1R) -- (x2R);
\draw[thick] (y1R) -- (y2R);

\node[lbl,align=center] at (9.8,-2.0)
{\scriptsize The ambient balls all meet at the hidden witness $\theta$.\\
 \scriptsize Hence $\check C_t^{\mathrm{amb}}$ is the full simplex, so contractible.};

\end{tikzpicture}

\caption{A schematic comparison between BK Vietoris--Rips and ambient BK \v{C}ech complexes at the same scale $t$. 
Left: in the Rips complex, only the mixed edges are present, so the 1-skeleton is $K_{2,2}$ and the complex is a $4$-cycle. 
Right: in the ambient \v{C}ech complex, the glued basepoint $\theta\sim\ast$ may lie in all ambient balls without being a vertex of the cloud; this hidden witness fills in all simplices, so the complex becomes contractible. This illustrates the basepoint-driven cone effect from Corollary~\ref{cor:cech_cone_effect} and the contrast with Corollary~\ref{ex:universal_loop}.}
\label{fig:bk-rips-vs-cech}
\end{figure}
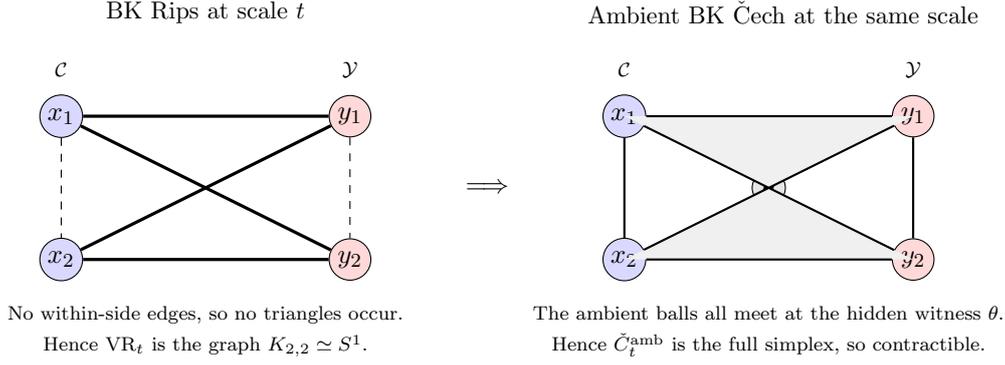

Ambient \v{C}ech complexes exhibit a basepoint-driven cone effect.
\begin{corollary}
\label{cor:cech_cone_effect}
Let $S\subset\mathcal X$ be finite and let $t\ge 0$.
Then the induced subcomplex of the ambient \v{C}ech complex $\check{C}^{\mathrm{amb}}_t(S\subseteq \mathcal X,d_\theta)$
on the vertex set $S_{\mathcal C}^{\le t}\cup S_{\mathcal Y}^{\le t}$ is a full simplex, hence contractible.
In particular, if every vertex of $S$ has radius $\le t$, then
$\check{C}^{\mathrm{amb}}_t(S\subseteq \mathcal X,d_\theta)$ is a simplex and therefore contractible.
\end{corollary}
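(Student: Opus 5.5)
The plan is to exhibit a single witness point lying in every ambient ball around the relevant vertices, namely the glued basepoint $\theta\sim\ast$ of the wedge $\mathcal C\vee_p\mathcal Y$. Set $W:=S_{\mathcal C}^{\le t}\cup S_{\mathcal Y}^{\le t}$. For any nonempty subset $\eta\subseteq W$, I will check that $\theta\in\bigcap_{s\in\eta}\overline{B}_{\mathcal X}(s,t)$. If $s\in S_{\mathcal C}^{\le t}$, then $d_\theta(\theta,s)=\beta(\theta,s)=r_{\mathcal C}(s)\le t$ by CP-reduction (Theorem~\ref{prop:BK-metric}). If $s\in S_{\mathcal Y}^{\le t}$, then $\theta\in\mathcal C$ and $s\notin\mathcal C$, so the cross formula \eqref{eq:cross-distance} gives
\[
d_\theta(\theta,s)=\bigl\|\bigl(\beta(\theta,\theta),r_{\mathcal Y}(s)\bigr)\bigr\|_{\ell^p}=r_{\mathcal Y}(s)\le t .
\]
Hence every subset of $W$ is a simplex of $\check{C}^{\mathrm{amb}}_t(S\subseteq\mathcal X,d_\theta)$.

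This is the same as applying Corollary~\ref{cor:basepoint-sufficient} with $\sigma:=\eta\cap\mathcal C$ and $\tau:=\eta\cap\mathcal Y^\circ$. There is one small caveat: that corollary is stated for a mixed pair, so when one of $\sigma,\tau$ is empty I will just repeat the witness computation above instead of citing it. Since the induced subcomplex on $W$ contains every finite nonempty subset of $W$, it is the full simplex on the finite vertex set $W$. When $W\neq\varnothing$, the geometric realization of a full simplex is convex, hence contractible.

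For the second assertion, if every vertex of $S$ has radius $\le t$, then $W=S$, so the induced subcomplex is the whole ambient complex and the first part applies directly.

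I expect no real obstacle here. The only points needing care are bookkeeping: the witness $\theta$ need not belong to $S$, which is exactly why the statement is about the ambient rather than the intrinsic \v{C}ech complex; the case $W=\varnothing$, where the claim should be read as vacuous or excluded; and the case split between vertices in $\mathcal C$ and in $\mathcal Y^\circ$ when computing distances to $\theta$.
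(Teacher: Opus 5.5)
Your proposal is correct and is essentially the paper's argument. The paper observes that every finite subset of $S_{\mathcal C}^{\le t}\cup S_{\mathcal Y}^{\le t}$ satisfies the hypothesis of Corollary~\ref{cor:basepoint-sufficient}, whose proof is exactly your witness computation at the glued basepoint $\theta\sim\ast$; your separate handling of non-mixed subsets and of the degenerate case $W=\varnothing$ is bookkeeping the paper leaves implicit.
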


\begin{proof}
Every finite subset of $S_{\mathcal C}^{\le t}\cup S_{\mathcal Y}^{\le t}$ satisfies the hypothesis of
Corollary~\ref{cor:basepoint-sufficient}, so every such subset spans a simplex in the ambient complex.
Hence the induced subcomplex is the full simplex on that vertex set.
\end{proof}

\begin{remark}
[See Figure 3.]
Comparing Corollary~\ref{ex:universal_loop} with Corollary~\ref{cor:cech_cone_effect}, we observe the following contrast.
In Corollary~\ref{ex:universal_loop}, the Rips complex at scale $t$ is a loop $S^1$.
But if all four vertices have radius $\le t$, then the ambient \v{C}ech complex at the same scale
is a $3$-simplex, hence contractible.
Thus ambient BK \v{C}ech filtrations can collapse homology much earlier than BK Rips filtrations.
\end{remark}
The following example shows that intrinsic and ambient \v{C}ech filtrations behave differently even in simple CP clouds.
\begin{example}[Intrinsic and ambient \v{C}ech filtrations for the CP cloud]\label{prop:cp-cech}
Let $S_{\mathcal C}:=\{x_0,x_1,x_4\}\subset \CP(M_n,M_m)$ be as in Example~\ref{ex:go1}.

\smallskip\noindent
\textup{(a) Intrinsic \v{C}ech on the finite metric space $S_{\mathcal C}$.}
One has
\[
\check{C}_t(S_{\mathcal C},\beta|_{S_{\mathcal C}})
=
\begin{cases}
\text{three isolated vertices}, & 0\le t<1,\\[1mm]
\text{the full $2$-simplex on $\{x_0,x_1,x_4\}$}, & t\ge 1.
\end{cases}
\]
Indeed, if $0\le t<1$, then each intrinsic ball in $S_{\mathcal C}$ is a singleton, because all nonzero
pairwise distances are at least $1$.
If $t\ge 1$, then $x_1\in S_{\mathcal C}$ lies in all three intrinsic balls, since
\[
\beta(x_1,x_0)=1,
\qquad
\beta(x_1,x_4)=1.
\]
Hence the three balls have nonempty common intersection.

\smallskip\noindent
\textup{(b) Ambient \v{C}ech in the Bures ray.}
Let
\[
R:=\{\Theta_c:c\ge 0\}\subset \CP(M_n,M_m).
\]
Then
\[
\check{C}^{\mathrm{amb}}_t(S_{\mathcal C}\subseteq R,\beta)
=
\begin{cases}
\text{three isolated vertices}, & 0\le t<\tfrac12,\\[1mm]
\text{the path graph }x_0\!-\!x_1\!-\!x_4, & \tfrac12\le t<1,\\[1mm]
\text{the full $2$-simplex on $\{x_0,x_1,x_4\}$}, & t\ge 1.
\end{cases}
\]
Indeed, by Lemma~\ref{lem:bures-ray}, the ray $R$ is isometric to $[0,\infty)$ via $x_c=\Theta_c\mapsto \sqrt c$,
so the three points $x_0,x_1,x_4$ correspond to $0,1,2$.
Two radius-$t$ ambient balls intersect iff the centers are at distance at most $2t$.
Hence the pairs $(x_0,x_1)$ and $(x_1,x_4)$ intersect for $t\ge \tfrac12$, while $(x_0,x_4)$ intersects for $t\ge 1$.
For $t\ge 1$, the point $x_1$ lies in all three ambient balls, so the nerve contains the filled triangle.
\end{example}

\usetikzlibrary{positioning,calc}

\begin{figure}[t]
\centering
\begin{tikzpicture}[scale=0.85,
    every node/.style={scale=0.9},
    pt/.style={circle,draw,fill=blue!15,inner sep=1.5pt},
    lbl/.style={draw=none}
]

\node[lbl] at (1.6,2.3) {\small Intrinsic \v{C}ech on $S_{\mathcal C}$};

\node[pt] (x0L) at (0,0) {$x_0$};
\node[pt] (x1L) at (1.6,0) {$x_1$};
\node[pt] (x4L) at (3.2,0) {$x_4$};

\draw[dashed,gray] (-0.3,0) -- (3.5,0);

\node[lbl,align=center] at (1.6,-1.6)
{\scriptsize $t<1$: isolated vertices\\
 \scriptsize $t\ge1$: full simplex};

\node[lbl] at (4.8,0) {$\Longrightarrow$};

\node[lbl] at (8.7,2.3) {\small Ambient \v{C}ech in $R\cong[0,\infty)$};

\draw[->] (6.5,0) -- (11,0);

\node[pt] (x0R) at (7,0) {$x_0$};
\node[pt] (x1R) at (8.7,0) {$x_1$};
\node[pt] (x4R) at (10.4,0) {$x_4$};

\draw[thick] (8,0.45) -- (9.4,0.45);
\draw[thick] (6.6,0.95) -- (8.1,0.95);
\draw[thick] (9.3,0.95) -- (10.8,0.95);

\node[lbl,align=center] at (8.7,-1.6)
{\scriptsize $t<\frac12$: isolated\\
 \scriptsize $\frac12\le t<1$: path\\
 \scriptsize $t\ge1$: simplex};

\end{tikzpicture}

\caption{Example~\ref{prop:cp-cech}. 
Intrinsic and ambient \v{C}ech filtrations behave differently even for a simple three-point CP cloud. 
The intrinsic complex jumps directly from discrete to a simplex, while the ambient complex exhibits an intermediate path stage due to intersections occurring in the ambient Bures ray.}
\label{fig:cp-cech-intrinsic-ambient}
\end{figure}
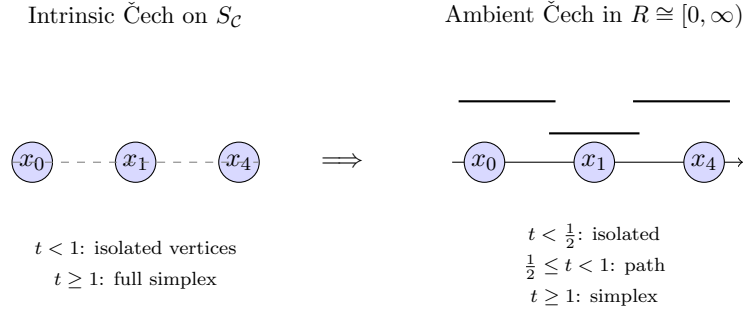

\begin{remark}
Thus the intermediate path stage occurs for the ambient \v{C}ech filtration, but not for the
intrinsic \v{C}ech filtration on the three-point metric space itself.
More precisely, the intrinsic \v{C}ech complex becomes contractible at $t=1$, when it jumps
directly to the full $2$-simplex. By contrast, the ambient \v{C}ech complex is already
contractible for $\tfrac12 \le t < 1$, since in that range it is the path graph
$x_0\!-\!x_1\!-\!x_4$, and it becomes the full $2$-simplex at $t=1$.
Likewise, $\VR_t(S_{\mathcal C},\beta)$ becomes contractible at $t=1$, where it is again the
path graph $x_0\!-\!x_1\!-\!x_4$, and becomes the full $2$-simplex only at $t=2$.
\end{remark}

Mixed BK clouds yield contractible ambient \v{C}ech complexes at moderate scales.
\begin{example}[Ambient \v{C}ech contractibility on the mixed cloud]
\label{thm:bk-cech-contractible}
Let $S:=\{x_0,x_4,y_+,y_-\}\subset \CB(M_n,M_m)$ be as in Example~\ref{ex:go2}.
If $t\ge \max\{1,r_-,r_+\},$
then the ambient \v{C}ech complex
$\check{C}^{\mathrm{amb}}_t(S\subseteq \mathcal X,d_\theta)$
is the full simplex on the four vertices $S$, hence contractible.

Indeed, since $\theta=x_1$ is the anchor, one has
\[
d_\theta(x_0,\theta)=\beta(x_0,\theta)=1,
\qquad
d_\theta(x_4,\theta)=\beta(x_4,\theta)=1.
\]
Also, $d_\theta(y_+,\theta)=r_+,$ and
$d_\theta(y_-,\theta)=r_-.$
Therefore $\theta\in \overline{B}_{\mathcal X}(s,t)$ for every $s\in S$.
So the four ambient balls have nonempty common intersection, and the ambient \v{C}ech complex is the full simplex.
\end{example}

\begin{remark}
Theorem~\ref{thm:bk-rips-loop} and Example~\ref{thm:bk-cech-contractible} show a typical BK phenomenon:
the Rips complex can exhibit a loop created by the wedge-join geometry, while the ambient \v{C}ech complex
can already be contractible at the same scale because the glued basepoint lies in all ambient balls,
even though it is not itself a vertex of the cloud.
\end{remark}


\end{document}